\DeclareMathOperator{\ord}{ord}
\renewcommand{\geq}{\geqslant}
\renewcommand{\leq}{\leqslant}
\newcommand{\bbn}{\mathbb{N}}
\newcommand{\bbp}{\mathbb{P}}
\newcommand{\bbz}{\mathbb{Z}}
\newcommand{\basis}{\mathcal{B}}
\newcommand{\vct}[2]{\langle #1,#2 \rangle}
\newcommand{\compvec}[3]{\vec{#1}^{\vct{#2}{#3}}}
\newcommand{\subgp}[2]{#1\bbz \oplus #2\bbz}
\newcommand{\qtgp}[2]{(\bbz/#1\bbz) \oplus (\bbz/#2\bbz)}
\newcommand{\ds}{\displaystyle}
\newcommand{\separate}{\medskip}
\theoremstyle{plain}
\newtheorem{thm}{Theorem}
\newtheorem*{thm*}{Theorem}
\newtheorem{lem}[thm]{Lemma}
\newtheorem{prop}[thm]{Proposition}
\newtheorem{cor}[thm]{Corollary}
\theoremstyle{definition}
\newtheorem{notn}[thm]{Notation}
\newtheorem*{ack}{Acknowledgements}
\theoremstyle{remark}
\newtheorem{rmk}[thm]{Remark}
\newtheorem{eg}[thm]{Example}
\newtheorem{qn}[thm]{Question}
\begin{document}
\title{The Cohen-Macaulay property of affine semigroup rings in dimension 2}
\author{Tony Se}
\email{tonyse@ku.edu}
\author{Grant Serio}
\email{jgserio@ku.edu}
\address{Department of Mathematics\\
University of Kansas\\
 Lawrence, KS 66045-7523 USA}

\date{\today}
\keywords{Cohen-Macaulay, Monomial, Projective Monomial Curve}

\subjclass[2010]{Primary 13H10, 14M05}

\numberwithin{thm}{section}

\begin{abstract}
  Let $k$ be a field and $x,y$ indeterminates over $k$. Let $R=k[x^a,x^{p_1}y^{s_1},\ldots,x^{p_t}y^{s_t},y^b]
  \subseteq k[x,y]$. We calculate the Hilbert polynomial of $(x^a,y^b)$. The multiplicity of this ideal provides part of a criterion for the ring to be Cohen-Macaulay. Next, we prove a simple
  numerical criterion for $R$ to be Cohen-Macaulay in the case when $t=2$. We also provide a simple algorithm which
  identifies the monomial $k$-basis of $R/(x^a,y^b)$. Finally, these simple results are specialized to the case of
  projective monomial curves in $\bbp^3$.
\end{abstract}

\maketitle

\section{Introduction}
\label{sec:intro}

We investigate a criterion for determining whether monomial rings of the form $$R=k[x^a,x^{p_1}y^{s_1},\ldots,x^{p_t}y^{s_t},y^b]$$ are Cohen-Macaulay (or CM). An important special case of this problem is the projective monomial curve: $k[x^n,x^{n-a_1}y^{a_1},\ldots,x^{n-a_t}y^{a_t},y^n]$ for integers $0<a_1<\ldots<a_t<n$. The study of such rings is inspired by the original example of Macaulay \cite[p.\ 98]{Mac}, $k[x^4,x^3y,xy^3,y^4]$. This is a domain with system of parameters $(x^4,y^4)$ in which $\lambda(R/(x^4,y^4))=5\neq e((x^4,y^4))=4$. It is observed that $\dim(R)=2$ and depth$(R)=1$.

One essential manner of viewing this problem is by considering the semigroup of monomials which lie in the ring. For a polynomial ring with $n$ variables, we let each monomial $m$ be a point in $\mathbb{Z}^n$ corresponding to its exponent vector $\log(m)$. These points form a semigroup inside $\mathbb{Z}^n$ whose generators correspond to the monomials generating $R$ over $k$.

One of the most important breakthroughs in the study of the CM property of these rings was made by Hochster.

\begin{thm}[{\cite[Theorem 1]{Ho}}] If $M$ is a monomial semigroup in the variables $x_1,\ldots,x_n$ and $k[M]\subset k[x_1,\ldots,x_n]$ is normal, then $R[M]\subset R[x_1,\ldots,x_n]$ is Cohen-Macaulay for any Cohen-Macualay ring $R$.\end{thm}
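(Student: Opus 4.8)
This is Hochster's theorem on normal affine semigroup rings; here is the line of proof I would pursue.

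\emph{Reduction to a field.}
Write $R[M]$ for the $R$-subalgebra of $R[x_1,\dots,x_n]$ generated by the monomials in $M$, so $R[M]=\bigoplus_{a\in M}Rx^a\cong R\otimes_{\bbz}\bbz[M]$. As $\bbz[M]$ is a free $\bbz$-module (on the monomial basis indexed by $M$), $R[M]$ is flat over $R$, and its fibre over a prime $\mathfrak q\subset R$ is the semigroup ring $\kappa(\mathfrak q)[M]$ over the residue field. Since a ring flat over a Cohen-Macaulay base with Cohen-Macaulay fibres is Cohen-Macaulay, it suffices to prove $k[M]$ is Cohen-Macaulay for every field $k$.

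\emph{Reduction to a pointed cone.}
Let $d=\operatorname{rank}\bbz M$; replacing $\bbz^n$ by the sublattice $\bbz M$ leaves $k[M]$ unchanged, so assume $\bbz M=\bbz^d$, whence $\dim k[M]=d$. If $C=\mathbb{R}_{\geq0}M$ contains a line, its lineality space meets $\bbz^d$ in a free subgroup $U$ of positive rank $e$, and $M=U\oplus(M\cap\Lambda)$ for a lattice complement $\Lambda$ of $U$, with $M\cap\Lambda$ normal and $\mathbb{R}_{\geq0}(M\cap\Lambda)$ pointed; then $k[M]=k[\bbz^e]\otimes_k k[M\cap\Lambda]$ is a Laurent extension of $k[M\cap\Lambda]$, so it is enough to treat the pointed case. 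Thus assume $C$ is a full-dimensional pointed rational cone, so that normality of $k[M]$ is the saturation identity $M=C\cap\bbz^d$.

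\emph{The normal pointed case.}
Write $C=\bigcap_{i=1}^m\{v:\vct{v}{u_i}\geq0\}$ with the $u_i\in(\bbz^d)^\ast$ primitive, indexing the facets $F_1,\dots,F_m$, and let $A=k[M]$ with graded maximal ideal $\mathfrak m$. For a face $F$ let $A_F$ be the localization of $A$ inverting the monomials on $F$; by saturation $A_F=k[M_F]$ where $M_F=\{a:\vct{a}{u_i}\geq0$ for every $i$ with $F\subseteq F_i\}$, again a normal semigroup ring. I would compute $H^\bullet_{\mathfrak m}(A)$ from the Ishida complex
\[
0\to A\to\bigoplus_{\dim F=1}A_F\to\cdots\to\bigoplus_{\dim F=d-1}A_F\to k[\bbz^d]\to0,
\]
whose $p$-th cohomology is $H^p_{\mathfrak m}(A)$. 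This complex is $\bbz^d$-graded, and in degree $a$ its strand has a copy of $k$ for each face $F$ with $a\in M_F$ --- that is, by saturation, for each face not contained in the union $\bigcup_{i:\,\vct{a}{u_i}<0}F_i$ of the facets ``violated'' by $a$. The standard computation of this strand --- Hochster's, as revisited by Stanley, Schenzel and others through the combinatorics of the face lattice of $C$ --- shows that for a saturated $M$ it is exact in every cohomological degree below $d$, the reduced (co)homology governing it vanishing there precisely because its supporting faces are such a constrained subset of the face lattice of the pointed cone $C$. Hence $H^i_{\mathfrak m}(A)=0$ for $i<d$, so $\operatorname{depth}A=d=\dim A$ and $A$ is Cohen-Macaulay. (Two alternative routes: triangulate $C$ by a shellable family of simplicial subcones --- each $k[\sigma\cap\bbz^d]$ being a finitely generated free module over the polynomial ring on $\sigma$'s primitive ray generators, hence Cohen-Macaulay --- and patch via Mayer-Vietoris sequences of semigroup rings together with the long exact sequence in local cohomology, inducting on dimension and on the number of pieces; or realize $k[M]$ via $a\mapsto(\vct{a}{u_i})_i$ as the degree-$0$ part, hence a module direct summand, of a polynomial ring and invoke the Hochster-Roberts theorem.)

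\emph{The main obstacle.}
I expect the hardest step to be the combinatorial vanishing invoked above: that every $\bbz^d$-graded strand of the Ishida complex is exact below cohomological degree $d$. The saturation hypothesis $M=C\cap\bbz^d$ is what makes it work, since it forces every face-localization $A_F$ to be a normal semigroup ring again, so that the faces supporting a given strand are exactly those avoiding a prescribed union of facets, a set too rigid to carry cohomology below the top. Without saturation this fails: for $k[x^4,x^3y,xy^3,y^4]$, where $M$ omits the lattice point $(2,2)$ of its own cone, the supporting set is no longer a union of cone-faces and some strand carries a nonzero class, matching $\operatorname{depth}=1<2=\dim$. A lesser, more routine point is organizing the Mayer-Vietoris (or double) induction in the triangulation approach, which rests on the existence of shellable triangulations --- standard, but not entirely formal.
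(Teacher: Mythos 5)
The paper does not prove this statement at all: it is quoted verbatim from Hochster's 1972 paper as background, so there is no in-paper argument to compare yours against. Judged on its own terms, your outline follows the now-standard route (essentially Ishida's local cohomology computation, as in Bruns--Herzog, Chapter 6) rather than Hochster's original argument, which proceeded by induction using a shelling-type decomposition of the cone; both are legitimate. Your preliminary reductions are sound: the identification $R[M] \cong R \otimes_{\mathbb{Z}} \mathbb{Z}[M]$, flat with fibres $\kappa(\mathfrak{q})[M]$, correctly reduces to the field case, and since $M \subseteq \mathbb{N}^n$ here the cone is automatically pointed, so the lineality-space discussion is superfluous but harmless; normality is then indeed equivalent to the saturation $M = C \cap \mathbb{Z}M$.

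The gap is that essentially all of the mathematical content of the theorem is concentrated in the sentence beginning ``The standard computation of this strand \dots shows that \dots it is exact in every cohomological degree below $d$,'' and you do not prove it. Concretely, for $a \in \mathbb{Z}^d$ the degree-$a$ strand of the Ishida complex is the augmented relative cochain complex of the cross-section polytope $P$ of $C$ modulo the subcomplex $\Gamma_a$ spanned by the facets $F_i$ with $\langle a, u_i\rangle < 0$; what must be shown is that $\Gamma_a$ is empty, is all of $\partial P$, or is acyclic (in fact contractible) in the relevant range, so that the relative cohomology can be nonzero only at the top. That geometric lemma is true, but it \emph{is} the theorem; the saturation hypothesis only gets you to the point where the lemma applies. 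Without carrying it out --- or completing one of your two alternatives (the shellable-triangulation induction, or the direct-summand argument, the latter being anachronistic since it leans on Hochster--Roberts, a much later and harder result) --- what you have is an accurate road map rather than a proof.
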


While this settles a great number of cases, there are plenty of monomial semigroups for which $k[M]$ is not normal. In particular, the projective monomial curves described in the first paragraph are never normal unless $t=n-1$. To see this, note that we may assume $\gcd(a_1,\ldots,a_t,n)=1$, so that $\frac{x}{y}$ is in the fraction field of $R$. Hence each monomial of the form $x^iy^{n-i}$ is in the fraction field of $R$. In a similar way, many rings of the form $k[x^a,x^{p_1}y^{s_1},\ldots,x^{p_t}y^{s_t},y^b]$ are not normal. For example, the fraction field of $R=k[x^2,x^{11}y,xy^{11},y^3]$ contains $\frac{x^{11}y}{(x^2)^5} = xy$ and $(xy)^6 \in R$.

In the case of simplicial affine semigroups, Goto, Suzuki and Watanbe give another criterion by which to evaluate CM. A semigroup is affine if it may be embedded in $\mathbb{Z}^n$ for some $n$. For any affine semigroup, we may consider the cone of $S$: $C(S) =\{\alpha \in \mathbb{R}^n| k\cdot \alpha \in S \text{ for some } 0\leq k\in \mathbb{R}\}$. The semigroup is said to be simplicial if the cone may be generated in $\mathbb{R}^n$ by $rank(S)$-many linearly independent elements of $S$ as $\mathbb{R}^n$ vectors.

\begin{thm}[{\cite[Theorem 5.1]{GoSuWa}; \cite[Theorem 6.4]{St}}] \label{CMcondition}Let $S$ be a simplicial affine semigroup. Let $e_1,\ldots,e_s$ be elements which span $C_S$. Then $k[S]$ is CM if and only if $$\{ x\in G | x+e_i \in S \text{ and } x+e_j \in S \text{ for some } i\neq j\} = S$$\end{thm}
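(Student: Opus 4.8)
The plan is to reduce the Cohen--Macaulay property of $k[S]$ to freeness over a polynomial subring, split $k[S]$ into cosets modulo the sublattice spanned by the $e_i$, and then recognize the displayed condition as exactly a regular sequence condition on each piece.

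\emph{Reduction to a polynomial subring.} Write $d=\operatorname{rank}(S)=\dim k[S]$. By simpliciality we may take $s=d$ with $e_1,\dots,e_d\in S$ linearly independent over $\bbz$, one on each extreme ray of $C_S$; put $\theta_i=x^{e_i}$ and $A=k[\theta_1,\dots,\theta_d]$, a polynomial ring. First I would show $k[S]$ is a finite $A$-module: by Gordan's lemma $C_S\cap G$ is a finitely generated semigroup, and subtracting integer parts of the coefficients shows $C_S\cap G=\bigcup_{r}\bigl(r+\sum_i\bbn e_i\bigr)$ over the finite set of lattice points $r$ in the half-open parallelepiped on $e_1,\dots,e_d$; hence $k[C_S\cap G]$, and so its $A$-submodule $k[S]$, is module-finite over $A$. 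Since $\sqrt{(\theta_1,\dots,\theta_d)k[S]}$ is the maximal monomial ideal of $k[S]$, the $A$-depth of $k[S]$ equals its depth, and Auslander--Buchsbaum over the regular ring $A$ gives: $k[S]$ is Cohen--Macaulay $\iff\operatorname{depth}_A k[S]=d\iff\operatorname{pd}_A k[S]=0\iff k[S]$ is a free $A$-module.

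\emph{Coset decomposition.} Let $L=\bigoplus_i\bbz e_i\subseteq G$, a subgroup of finite index. Sorting the monomials of $k[S]$ by their class modulo $L$ gives $k[S]=\bigoplus_{c}M_c$ as $A$-modules, with finitely many nonzero summands, since each $\theta_i$ preserves $L$-classes. Fixing $s_0\in S$ in the class $c$ and identifying $L\cong\bbz^d$ by $\sum m_ie_i\mapsto(m_1,\dots,m_d)$ carries $M_c$ onto the $A$-module $k[P_c]$, where $P_c\subseteq\bbz^d$ is an up-set (order filter), of finite type since $M_c$ is finitely generated, and $\theta_i$ acts by translation by the $i$-th unit vector $u_i$. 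Thus $k[S]$ is Cohen--Macaulay $\iff$ every $M_c$ is $A$-free. Moreover the set equality in the theorem, read coset by coset, is exactly the condition $(\ast)_c$: for all $p\in\bbz^d$, if $p+u_i\in P_c$ and $p+u_j\in P_c$ for some $i\neq j$, then $p\in P_c$; the automatic inclusion $\supseteq$ corresponds to $P_c$ being an up-set.

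\emph{Each coset.} It remains to prove $M_c$ is $A$-free $\iff(\ast)_c$. For $(\Rightarrow)$: a finitely generated $\bbz^d$-graded torsion-free rank-one free $A$-module is $\cong A$ up to shift, so $P_c=v+\bbn^d$ for some $v$; then $p+u_i,p+u_j\geq v$ force $p\geq v$ coordinatewise (read coordinate $i$ off $p+u_j$ and the remaining coordinates off $p+u_i$), whence $p\in P_c$. For $(\Leftarrow)$: I claim $(\ast)_c$ makes $\theta_1,\dots,\theta_d$ a regular sequence on $M_c$. The quotient $M_c/(\theta_1,\dots,\theta_{i-1})M_c$ has monomial $k$-basis $\{x^p:p\in P_c,\ p-u_l\notin P_c\text{ for all }l<i\}$, and $\theta_i$ acts monomially on it; it fails to be a nonzerodivisor only if some basis monomial $x^p$ is killed, i.e.\ $(p+u_i)-u_l\in P_c$ for some $l<i$. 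In that case $q:=p-u_l$ satisfies $q\notin P_c$ while $q+u_l=p\in P_c$ and $q+u_i=(p+u_i)-u_l\in P_c$ with $l\neq i$, contradicting $(\ast)_c$. Hence $\theta_1,\dots,\theta_d$ is a regular sequence on $M_c$, so $\operatorname{depth}_A M_c=d=\dim M_c$, and a finitely generated graded module of maximal depth over the polynomial ring $A$ is free. Assembling the pieces, $k[S]$ is Cohen--Macaulay $\iff(\ast)_c$ holds for all $c$ $\iff$ the displayed set equality.

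\emph{The main obstacle.} The substantive effort is bookkeeping rather than homological algebra: establishing module-finiteness over $A$ (this is exactly where simpliciality is essential --- the $e_i$ must span the whole cone so that $\theta_1,\dots,\theta_d$ is a system of parameters) and carefully matching the displayed set equality with the family $(\ast)_c$ of conditions on the up-sets $P_c$. Once that is in place, the crux --- that $(\ast)_c$ is literally the regular-sequence condition for $\theta_1,\dots,\theta_d$ on $M_c$ --- is the short computation above, and everything else is formal.
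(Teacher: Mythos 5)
The paper does not prove this statement: it is quoted from the literature (Goto--Suzuki--Watanabe, Theorem 5.1; Stanley, Theorem 6.4) and used as a black box, so there is no internal proof to compare against. Your argument is, in essence, the standard proof of that criterion, and I find it correct: Noether normalization by $A=k[x^{e_1},\dots,x^{e_d}]$ plus Auslander--Buchsbaum converts Cohen--Macaulayness into freeness over $A$; the coset decomposition $k[S]=\bigoplus_c M_c$ along $G/L$ is exactly the right splitting because each $M_c$ has rank one over $A$; and the identification of the displayed set equality with the family of conditions $(\ast)_c$ on the up-sets $P_c$, together with the observation that $(\ast)_c$ is precisely what makes $\theta_1,\dots,\theta_d$ a regular sequence on $M_c$ (using that the fine multigrading has one-dimensional graded pieces, so a monomial operator is a nonzerodivisor iff it kills no basis monomial), is the crux and is carried out correctly in both directions. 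Two small points you should make explicit if you write this up: first, the statement's hypothesis should be read as taking $s=d$ linearly independent elements, one on each extreme ray, as you do --- otherwise the ``for some $i\neq j$'' clause does not match the lattice $L$ you quotient by; second, in the forward direction, ``free of rank one implies generated by a single monomial'' deserves a sentence (graded Nakayama applied to $M_c/(\theta_1,\dots,\theta_d)M_c$, which is one-dimensional and multigraded, hence spanned by the class of one monomial $x^v$, giving $P_c=v+\bbn^d$). Neither is a gap in substance.
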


Goto and Watanabe defined a similar extension $S'$ for a general affine semigroup $S$. Trung and Hoa \cite[Theorem 4.1]{TrHo} identify a topological criterion which, together with $S=S'$, is necessary and sufficient for CM.

The semigroup defined by elements of $R=k[x^a,x^{p_1}y^{s_1},\ldots,x^{p_t}y^{s_t},y^b]$ is an affine semigroup by log(-). Furthermore, any element of $\mathbb{R}^2$ with nonnegative entries may be written as a combination of $(a,0)$ and $(0,b)$. This includes every element of $S$, so $S$ is a simplicial affine semigroup.

The criterion of theorem \ref{CMcondition} is straightforward to check for a single ring, but does not lend itself to analyzing classes of rings. Reid and Roberts \cite{RoRe7} introduce a related notion of a maximal projective monomial curve in order to demonstrate a large class of CM curves. The special case of projective monomial curves continues to be studied.

In this paper, we consider affine semigroup rings in dimension 2. The framework we find helpful emphasizes the congruence classes of the exponent vectors. This allows us to calculate the Hilbert polynomial of $(x^a,y^b)$ in section \ref{sec:multiplicity}. Note that the constants $t_{p,q},s_{p,q},n_{p,q}$ in the following theorem are described in \ref{lem:constant} and \ref{prop:fullbn}. 

\begin{thm*}[\ref{thm:subgp}]Let $R=k[x^a, x^{p_1}y^{q_1},x^{p_2}y^{q_2},\ldots,x^{p_t}y^{q_t},y^b]$. The Hilbert polynomial of $(x^a,y^b)$ is $P(n)=|H|(n+1)+\sum_{(p,q)\in H} (t_{p,q}+s_{p,q})$  where $H\subset (\mathbb{Z}/a\mathbb{Z} \oplus \mathbb{Z}/b\mathbb{Z})$ is the subgroup generated by $(p_1,q_1),(p_2,q_2),\ldots,(p_t,q_t)$. In particular, $e((x^a,y^b)) = |H|$, and the Hilbert function equals the Hilbert polynomial for $n\geq \max_{(p,q)\in H}(n_{p,q})$. \end{thm*}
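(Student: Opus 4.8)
The plan is to translate the statement into a lattice‑point count in the semigroup $S\subseteq\bbn^2$ of $R$, broken up by residue classes modulo $\subgp{a}{b}$, and then feed in the ``staircase'' description of each class supplied by \ref{lem:constant} and \ref{prop:fullbn}.

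\emph{Reduction to counting.} Write $I=(x^a,y^b)$. I read ``the Hilbert polynomial of $(x^a,y^b)$'' as the Hilbert polynomial of $\operatorname{gr}_I(R)=\bigoplus_{n\ge 0}I^n/I^{n+1}$; since $I$ is $\subgp{a}{b}$-primary this graded ring has dimension $\dim R=2$, so its Hilbert polynomial has degree $1$ with leading coefficient $e((x^a,y^b))$, which is why that multiplicity drops out of the formula. Because $I$ is monomial, $I^n=(x^{\alpha a}y^{\beta b}\colon \alpha,\beta\ge 0,\ \alpha+\beta=n)$, so $I^n/I^{n+1}$ has a $k$-basis consisting of the monomials $x^my^{n'}\in R$ whose $I$-adic order
$\ord_I(x^my^{n'}):=\max\{\alpha+\beta\colon \alpha,\beta\ge 0,\ (m-\alpha a,n'-\beta b)\in S\}$
equals $n$. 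Hence $P(n)=\dim_k I^n/I^{n+1}$ counts the points $(m,n')\in S$ on which this function takes the value $n$.

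\emph{Decomposition by residue class.} The image of $S$ in $\qtgp{a}{b}$ is the submonoid generated by the classes of $(p_1,q_1),\dots,(p_t,q_t)$ (the generators $(a,0),(0,b)$ map to $0$), and a finitely generated submonoid of a finite abelian group is a subgroup; thus this image is exactly $H$, so the nonempty classes are precisely the $|H|$ elements of $H$. Fix $(p,q)\in H$, take the representative with $0\le p<a$ and $0\le q<b$, and write a point of that class as $(p+ia,q+jb)$ with $i,j\ge 0$. Then $(p+ia,q+jb)\in S$ iff $(i,j)\in T_{p,q}:=\{(i,j)\in\bbz_{\ge 0}^2\colon (p+ia,q+jb)\in S\}$, which is an order filter of $\bbz_{\ge 0}^2$ (closure of $S$ under $+(a,0)$, $+(0,b)$), and $\ord_I(x^{p+ia}y^{q+jb})=\psi_{p,q}(i,j):=\max\{u+v\colon u,v\ge 0,\ (i-u,j-v)\in T_{p,q}\}$. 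Therefore $P(n)=\sum_{(p,q)\in H}\#\{(i,j)\in T_{p,q}\colon \psi_{p,q}(i,j)=n\}$, and the problem is now one summand at a time.

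\emph{Counting one class and assembling.} Here I invoke \ref{lem:constant} and \ref{prop:fullbn}: they pin down $T_{p,q}$ by its finitely many minimal elements (the staircase), record the relevant exponent data as $t_{p,q}$ and $s_{p,q}$, and produce the integer $n_{p,q}$ past which the staircase has already flattened to constant height in the $x$-direction and constant width in the $y$-direction. For $n\ge n_{p,q}$ the level set $\{\psi_{p,q}=n\}$ splits into one long antidiagonal segment (coming from the minimal-total-degree points of $T_{p,q}$, contributing $n+1$ plus the width of the flat region) together with exactly one extra point for each step of the staircase near the horizontal axis and one for each step near the vertical axis; the latter two bounded contributions are precisely $t_{p,q}$ and $s_{p,q}$ as defined there. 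Hence $\#\{(i,j)\in T_{p,q}\colon \psi_{p,q}(i,j)=n\}=(n+1)+t_{p,q}+s_{p,q}$ for all $n\ge n_{p,q}$, and summing over the $|H|$ classes gives $P(n)=|H|(n+1)+\sum_{(p,q)\in H}(t_{p,q}+s_{p,q})$ for $n\ge\max_{(p,q)\in H}n_{p,q}$, with $e((x^a,y^b))$ equal to the leading coefficient $|H|$.

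\emph{Main obstacle.} Everything through the decomposition step is essentially bookkeeping; the delicate part is converting the qualitative staircase picture of $T_{p,q}$ into the exact level-set count with the exact threshold $n_{p,q}$ — not merely ``$n\gg 0$''. One must check that the long antidiagonal and the finitely many axis-corrections are disjoint and exhaustive, and that below $n_{p,q}$ the count can genuinely fail to equal $(n+1)+t_{p,q}+s_{p,q}$ (so that the Hilbert function only eventually agrees with the polynomial). This is exactly the combinatorics that \ref{lem:constant} and \ref{prop:fullbn} are set up to control, so in practice the theorem should follow fairly quickly from those two results.
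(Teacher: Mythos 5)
Your proposal is correct and follows essentially the same route as the paper: your ``long antidiagonal'' per residue class is the paper's $A_n$ (giving $|H|(n+1)$), your axis corrections are the paper's $B_n$, and like the paper you defer the exact count $\sum_{(p,q)\in H}(t_{p,q}+s_{p,q})$ and the threshold $\max_{(p,q)\in H} n_{p,q}$ to Lemma~\ref{lem:constant} and Proposition~\ref{prop:fullbn}. The only differences are presentational (phrasing the count via an $I$-adic order function on lattice points and spelling out why the image of the semigroup in $\qtgp{a}{b}$ is all of $H$), not mathematical.
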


These details allow us to manipulate the construction of a ring to achieve specific coefficients and level of stabilization for the Hilbert polynomial.

Our calculations also highlight an interesting set of $k$-independent monomials over $R/(x^a,y^b)$. In section \ref{sec:fourgen}, we give more specific calculations for rings with 4 monomial generators. Theorem \ref{thm:fourgen} provides a simple criterion for determining the CM property and Theorem \ref{thm:basis} presents an algorithm that identifies a monomial $k$-basis for $R/(x^a,y^b)$. Section \ref{sec:p3} highlights the application of this work to projective monomial curves in $\mathbb{P}^3$. The CM condition for such rings is as follows.

\begin{thm*}[\ref{thm:p3}] Let $R=k[x^n,x^{n-\ell}y^\ell,x^{n-m}y^m,y^n]$ with $0<\ell<m<n$. Let $b$ be the smallest integer such that there exist integers $a \geq 0$ and $c>0$ with $bm=a\ell+cn$. Choose $a$ such that $n/\!\gcd(n,\ell)>a$. Then $R$ is CM if and only if $b\geq a+c$.\end{thm*}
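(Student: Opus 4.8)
The plan is to obtain Theorem~\ref{thm:p3} as a specialization of the four-generator criterion of Theorem~\ref{thm:fourgen}. Writing $\ell_1 = \ell$ and $\ell_2 = m$, the ring $R = k[x^n, x^{n-\ell_1}y^{\ell_1}, x^{n-\ell_2}y^{\ell_2}, y^n]$ is exactly of the shape studied in Section~\ref{sec:fourgen} with $a = b = n$, $p_i = n-\ell_i$, $q_i = \ell_i$. The observation that makes the specialization tractable is that every generator lies on the line $x + y = n$: hence every element of the semigroup $S$ has coordinate-sum divisible by $n$, the subgroup $H \subseteq \qtgp{n}{n}$ lies in the antidiagonal, and in fact $H = \{(-j,j) : j \in \langle\ell,m\rangle\}$, where $\langle\ell,m\rangle \le \bbz/n\bbz$; in particular $|H| = n/\!\gcd(n,\ell,m)$. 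Thus the two-parameter bookkeeping of Sections~\ref{sec:multiplicity}--\ref{sec:fourgen} collapses to one parameter: within the class $(n-j,j)$ the points of $S$ form, after dividing out by the cone $\langle(n,0),(0,n)\rangle$, an up-set in $\bbn^2$ whose minimal elements record the minimal ways of writing $j \equiv \alpha_1\ell + \alpha_2 m \pmod n$ with $\alpha_1,\alpha_2 \ge 0$, and the constants $t_{p,q}, s_{p,q}, n_{p,q}$ of \ref{lem:constant} and \ref{prop:fullbn} reduce to this one-dimensional numerical data.

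With this in hand I would run the criterion of Theorem~\ref{thm:fourgen}. It amounts to the statement that $R$ is Cohen--Macaulay precisely when each class-up-set has a single minimal element, and for four generators only two ``pure'' representations compete in a given class, the one using copies of the exponent vector $v_1 = (n-\ell,\ell)$ only and the one using copies of $v_2 = (n-m,m)$ only. The governing class is $j_0 := bm \bmod n$: there, $a v_1$ (with $a$ the least nonnegative residue, which is the reason for the constraint $0 \le a < n/\!\gcd(n,\ell)$) is the pure-$v_1$ point and $b v_2$ the distinguished pure-$v_2$ point, and a short computation shows these two points of the up-set are comparable, with the $v_1$-point the smaller, if and only if $b(n-m) \ge a(n-\ell)$. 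Substituting $bm = a\ell + cn$ rewrites $b(n-m) \ge a(n-\ell)$ as $b \ge a+c$; and the defining minimality of $b$ subject to $c > 0$ is precisely what makes $b v_2$ the first power of $v_2$ whose height in the class $j_0$ exceeds that of $a v_1$, so that no earlier power of $v_2$, and (as one checks) no mixed representation, cuts out a second corner there.

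I expect the real work to be twofold. First, one must show it suffices to test the single class $j_0$: that is, if $b \ge a+c$ then every class $(n-j,j)$ with $j \in \langle\ell,m\rangle$ has a unique minimal element --- including $j \equiv 0,\ell,m$, where the level-one point is already a generator and so is automatically fine, and including the possibility of a hidden incomparable pair in some $j \ne j_0$, which the inequality $\ell < m$ together with the extremality built into $a$, $b$, $c$ is what excludes. This is where the full strength of Theorem~\ref{thm:fourgen} enters, and it is the main obstacle. Second, one must line up the normalization in the statement with the extremal quantities occurring in Theorem~\ref{thm:fourgen}; the equivalence $b \ge a+c \Leftrightarrow b(n-m) \ge a(n-\ell)$ is routine algebra, but checking the boundary cases (for instance $a = 0$, or $j_0$ coinciding with $\ell$ or $m$) takes care. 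As a cross-check one may bypass Theorem~\ref{thm:fourgen} and argue directly from Theorem~\ref{CMcondition} with $e_1 = (n,0)$ and $e_2 = (0,n)$: any hole witnessing failure of Cohen--Macaulayness lies in an antidiagonal class, and the lowest such hole is forced to be the inner corner cut out by $a v_1$ and $b v_2$ in class $j_0$, so that $R$ is Cohen--Macaulay if and only if that point already lies in $S$, if and only if $b \ge a+c$.
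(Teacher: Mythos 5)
Your proposal takes essentially the same route as the paper: specialize Theorem~\ref{thm:fourgen}\ref{item:cm} to the antidiagonal situation, where the computation $b(n-m)-a(n-\ell)=(b-a)n-(bm-a\ell)=(b-a-c)n$ turns the condition $g_2\geq 0$ into $b\geq a+c$. The two pieces of ``real work'' you defer are exactly what the paper has already packaged: the reduction to a single class is literally the statement of Theorem~\ref{thm:fourgen}\ref{item:cm}, and the reconciliation of your minimality condition on $b$ (requiring only $c>0$) with the one in Notation~\ref{notn:abch} is the paper's Lemma~\ref{lem:geq}, which shows $g_2\geq 0$ forces $h_2>0$ so that the two notions of minimal $b_2$ coincide.
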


There are several previous results which bear some resemblence to our conclusions. \cite{Li} provides an algorithm identifying `basis points' in an affine semigroup. These are not the same as our basis elements of $R/(x^a,y^b)$ and the results assume seminormality of the semigroup. Among a study of the defining ideals of monomial rings, \cite[Remark 2.17]{LiPaRo} provides a geometric condition on the monomial basis for a projective monomial curve in $\bbp^3$ to have a CM ring. In contrast, Theorem \ref{thm:p3} is a clearly numerical condition on the exponents. In \cite{MoPaTa} we find another numerical criterion for a projective monomial curve in $\bbp^3$ to be CM, whereas our results in Section \ref{sec:fourgen} do not require the monomial generators of $R$ to have the same degree.

\begin{ack}The results in this paper contribute to the authors' doctoral theses. The authors would like to thank their advisors, Hailong Dao and Daniel Katz. The authors are also indebted to Arindam Banerjee for discussions on this topic.\end{ack}

\section{Asymptotic Behavior of the System of Parameters}
\label{sec:multiplicity}

One important window into the CM property is the asymptotic lengths $R/(x^a,y^b)^n$ for a system of parameters $(x^a,y^b)\in R$. The main result of this section, Theorem \ref{thm:subgp}, gives the Hilbert polynomial for $(x^a,y^b)$ in the ring $R=k[x^a, x^{p_1}y^{q_1},x^{p_2}y^{q_2},\ldots,x^{p_t}y^{q_t},y^b]$. We demonstrate a method by which rings with a specific Hilbert polynomial may be constructed, then return to the implications for evaluating the CM property. A classic characterization of Cohen-Macaulay local rings links this property to multiplicity and length. The following theorem is taken from the text by Matsumura.
\begin{thm}[{\cite[Theorem 17.11]{Ma}}] \label{thm:matsu}
Let $(R,\mathfrak{m})$ be a local ring. The following are equivalent:
\begin{enumerate}[label=(\roman*),align=left,leftmargin=*,nosep]
  \item $R$ is a Cohen-Macaulay ring.
  \item $\lambda(R/I) = e(I)$ for every $I$ generated by a system of parameters.
  \item $\lambda(R/I)=e(I)$ for some $I$ generated by a system of parameters.
\end{enumerate}
\end{thm}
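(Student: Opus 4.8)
The plan is to run the cycle of implications (i) $\Rightarrow$ (ii) $\Rightarrow$ (iii) $\Rightarrow$ (i), where essentially all the content sits in the last one. Put $d = \dim R$. The case $d = 0$ is immediate: $R$ is Artinian, the only parameter ideal is $(0)$, one has $\lambda(R/(0)) = \lambda(R) = e((0))$, and $R$ is automatically Cohen--Macaulay; so assume $d \geq 1$ and fix a parameter ideal $I = (x_1,\dots,x_d)$, writing $A = R/I$, an Artinian local ring. The engine throughout is the natural surjection of graded $A$-algebras
\[
  \phi\colon A[X_1,\dots,X_d] \longrightarrow \operatorname{gr}_I(R) = \bigoplus_{n\geq 0} I^n/I^{n+1},\qquad X_i\longmapsto x_i + I^2,
\]
which is onto because $\operatorname{gr}_I(R)$ is generated over $A$ in degree one by the classes of the $x_i$. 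Comparing Hilbert functions degree by degree and summing gives $\lambda(R/I^{n+1}) \leq \lambda(A)\binom{n+d}{d}$ for all $n$, and reading off the leading coefficient of the Hilbert--Samuel polynomial recovers the classical bound $e(I) \leq \lambda(R/I)$, valid for every parameter ideal.

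For (i) $\Rightarrow$ (ii): if $R$ is Cohen--Macaulay then every system of parameters is a regular sequence, and for an ideal generated by a regular sequence the map $\phi$ is an isomorphism (classical; see, e.g., \cite{Ma}). Hence $\lambda(R/I^{n+1}) = \lambda(A)\binom{n+d}{d}$ identically, and comparing leading coefficients gives $e(I) = \lambda(R/I)$. The implication (ii) $\Rightarrow$ (iii) is trivial, since a Noetherian local ring always admits a system of parameters.

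For (iii) $\Rightarrow$ (i), assume $\lambda(R/I) = e(I)$, i.e.\ $\lambda(A) = e(I)$, and let $K = \ker\phi$. From $0 \to K \to A[X_1,\dots,X_d] \to \operatorname{gr}_I(R) \to 0$ one gets $\sum_{j=0}^n \lambda(K_j) = \lambda(A)\binom{n+d}{d} - \lambda(R/I^{n+1})$, which for $n \gg 0$ equals $(\lambda(A)-e(I))\binom{n+d}{d}$ plus terms of degree $< d$ in $n$, hence is a polynomial of degree $\leq d-1$; so $K$, as a finitely generated graded $A[X_1,\dots,X_d]$-module, has Krull dimension at most $d-1$. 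On the other hand, $A$ being Artinian, every associated prime of $A[X_1,\dots,X_d]$ has the form $\mathfrak{p}A[X_1,\dots,X_d]$ with $\mathfrak{p}$ a maximal ideal of $A$, and each such prime has coheight $d$; therefore a nonzero submodule of $A[X_1,\dots,X_d]$ has dimension exactly $d$. Hence $K = 0$, that is, $\operatorname{gr}_I(R) \cong A[X_1,\dots,X_d]$ is a polynomial ring.

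It remains to deduce that $x_1,\dots,x_d$ is a regular sequence, whence $\operatorname{depth} R \geq d = \dim R$ and $R$ is Cohen--Macaulay. First, $x_1$ is a nonzerodivisor: if $x_1 a = 0$ with $a \neq 0$, pick $n$ with $a \in I^n \setminus I^{n+1}$ (possible as $\bigcap_n I^n = 0$); then the class $a^* \in \operatorname{gr}_I(R)_n$ is nonzero, whereas $X_1 \cdot a^*$, being the class of $x_1 a = 0$ in $I^{n+1}/I^{n+2}$, is zero --- impossible, since $X_1$ is a nonzerodivisor on $A[X_1,\dots,X_d]$. A short computation then shows $x_1 R \cap I^n = x_1 I^{n-1}$ for all $n$, so that $\operatorname{gr}_{I/(x_1)}(R/(x_1)) \cong A[X_2,\dots,X_d]$ is again a polynomial ring in one fewer variable, and induction on $d$ completes the proof. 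The main obstacle is the previous paragraph: upgrading the purely numerical identity $\lambda(R/I) = e(I)$ --- which a priori pins down only one leading coefficient --- to the rigid structural statement that $\operatorname{gr}_I(R)$ is a polynomial ring. The leverage comes from the unmixedness of $A[X_1,\dots,X_d]$ over the Artinian ring $A$: there is no room for a nonzero kernel of dimension smaller than $d$.
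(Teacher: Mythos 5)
The paper does not prove this statement; it is quoted verbatim from Matsumura \cite[Theorem 17.11]{Ma} and used as a black box, so there is no internal proof to compare against. Your argument is correct and is essentially the standard textbook proof: the inequality $e(I)\leq\lambda(R/I)$ and the implication (iii)$\Rightarrow$(i) both run through the surjection $(R/I)[X_1,\dots,X_d]\twoheadrightarrow\operatorname{gr}_I(R)$, with the key leverage being that every associated prime of the polynomial ring over the Artinian base has coheight $d$, forcing the kernel to vanish, after which the nonzerodivisor/induction step recovers the regular sequence.
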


In the following discussion, we assign $x$ to have weight $b$ and $y$ to have weight $a$, so that $\deg(x^\alpha y^\beta) =b\alpha+a\beta$. This is so that $x^a$ and $y^b$ will have equal degree. When we wish to specifically highlight the exponent vector, we will use $\log$: $\log(x^\alpha y^\beta)=(\alpha,\beta)\in \bbz^2$.

\begin{rmk}\label{rmk:a=b}It may be noted that there exists a ring isomorphism $\phi: R\rightarrow R'$ with $\phi(x)=x^b$, $\phi(y)=y^a$ and $R'=k[x^{ab},x^{bp_1}y^{aq_1},\ldots,y^{ab}]$. Without loss of generality, we might have assumed that $a=b$. On the other hand, for $a\neq b$ we may freely assume $\gcd(a,p_1,p_2,\ldots,p_t)=\gcd(b,q_1,\ldots,q_t)=1$.\end{rmk}

\begin{notn}\label{notn}We consider $(x^a,y^b)$ and its powers and find it convenient to denote $X:= x^a$ and $Y:=y^b$. We sort monomials in $R$ into congruence classes based on their exponents in $H\subset \qtgp{a}{b}$. For each $(p,q)\in H$, we choose $\alpha_{p,q}$ to be one monomial of minimal (weighted) degree in $R\setminus (X,Y)$. We will approximate $\lambda\left( \frac{(X,Y)^n}{(X,Y)^{n+1}}\right)$ by the size of  $$A_n:= \cup_{(p,q)\in H} \{\alpha_{p,q}X^n, \alpha_{p,q}X^{n-1}Y,\ldots,\alpha_{p,q}Y^n\}$$
The remaining monomials of $(X,Y)^n$ outside both $(X,Y)^{n+1}$ and $A_n$ will be denoted $B_n$.

Set $\alpha_{p,q} = x^{\ell a+p}y^{mb+q}= {}_{0}\beta=\beta_0$. For $i<n$, let $_i\beta := x^{(\ell-i)a+p}y^{(m+j)b+q}$ with $j$ the least possible integer such that $_i\beta \in R$. It may be that there is no such monomial, in which case we do not consider $_i\beta$ to be defined. Similarly, $\beta_i := x^{(\ell+j)a+p}y^{(m-i)b+q}$.\end{notn}

\begin{lem}\label{lem:higherbeta}Let $i>j$. If $\deg(\beta_i)\leq \deg(\beta_h)$ for all $h>j$ such that $\beta_h$ is well-defined, then $\beta_i X^{n-i+h}Y^{i-h}\notin (X,Y)^{n+1}$ for $i\geq h>j$.
\end{lem}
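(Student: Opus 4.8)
The plan is to understand what membership in $(X,Y)^{n+1}$ means in terms of exponent vectors, and then to argue by contradiction. A monomial $M$ lies in $(X,Y)^{n+1}$ precisely when $M = X^c Y^d \cdot N$ for some monomial $N \in R$ and some $c+d = n+1$; equivalently, $\log M = (ca + \gamma, db + \delta) + \log N$ with $\log N \in S$ and $c + d \geq n+1$. So to show $\beta_i X^{n-i+h} Y^{i-h} \notin (X,Y)^{n+1}$, I would suppose for contradiction that it equals $X^c Y^d N$ with $N \in R$ and $c + d = n+1$, and then derive a contradiction with the defining minimality of $\beta_i$.

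First I would set up notation: write $\beta_i = x^{(\ell+j_i)a + p} y^{(m-i)b + q}$, where $j_i$ is the least integer making $\beta_i$ lie in $R$, so that the monomial in question has $\log$ equal to $((\ell + j_i + n - i + h)a + p,\ (m - i + i - h)b + q) = ((\ell + j_i + n - i + h)a + p,\ (m-h)b + q)$. Its weighted degree is $\deg(\beta_i) + (n-i+h)\cdot\deg X + (i-h)\cdot \deg Y = \deg(\beta_i) + n\deg X$ (since $\deg X = \deg Y$). Now if this monomial were in $(X,Y)^{n+1}$, factoring out $X^c Y^d$ with $c+d = n+1$ leaves a monomial $N \in R$ whose $x$-exponent is $\equiv p$ and $y$-exponent is $\equiv q$ modulo $a$ and $b$ respectively, hence $N$ represents the class $(p,q) \in H$; and $N \notin (X,Y)$ would then contradict... no — I need to be more careful, since $N$ itself could lie in $(X,Y)$. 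The right move: among all factorizations, push as many $X$'s and $Y$'s out as possible, so that the residual monomial $N$ lies in $R \setminus (X,Y)$ and still represents class $(p,q)$; this uses $c+d \geq n+1$ only as an inequality, and dropping to $N \in R\setminus(X,Y)$ can only increase $c+d$. Then $\deg N \geq \deg(\alpha_{p,q})$ by the minimality in Notation~\ref{notn}, but also $\deg N = \deg(\beta_i X^{n-i+h}Y^{i-h}) - (c+d)\deg X \leq \deg(\beta_i) + n\deg X - (n+1)\deg X = \deg(\beta_i) - \deg X$. Comparing the $y$-exponents: $N$ has $y$-exponent $(m-h)b + q - d\,b = (m - h - d)b + q$, which must be $\geq 0$ but also determines how far down from row $m$ we are.

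The heart of the argument is then a counting/row-bookkeeping step. The monomial $\beta_i X^{n-i+h}Y^{i-h}$ sits in ``row'' $m - h$ (meaning its $y$-exponent is $(m-h)b+q$) and we have removed $i - h$ copies of $Y$ from $\beta_i$ to get there. If it factored through $(X,Y)^{n+1}$, we would be able to write it as $X^{c}Y^{d} N$ with $N$ representing $(p,q)$ in $R \setminus (X,Y)$, and $N$ sits in some row $m - h'$ with $h' \geq h$ — in fact $h' $ must satisfy $h' > j$ or $h' $ relates to one of the $\beta_{h'}$. The hypothesis that $\deg(\beta_i) \le \deg(\beta_{h'})$ for all $h' > j$ for which $\beta_{h'}$ is defined forces $\deg N \geq \deg(\beta_{h'}) \geq \deg(\beta_i)$; but we showed $\deg N \leq \deg(\beta_i) - \deg X < \deg(\beta_i)$, a contradiction. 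I expect the main obstacle to be precisely this identification of the residual monomial $N$ with (a shift of) some well-defined $\beta_{h'}$ with $h' > j$ — i.e., checking that $N$, being the minimal-degree representative of $(p,q)$ in its row after clearing $X,Y$, really is $\beta_{h'}$ (or has degree at least $\deg \beta_{h'}$), and that the relevant index $h'$ genuinely exceeds $j$ rather than landing in the excluded range $\leq j$. Handling the boundary cases (when $h = i$, when $d = 0$, when no valid $\beta_{h'}$ exists) and making the ``$\deg X = \deg Y$'' degree bookkeeping airtight will be the routine but delicate part.
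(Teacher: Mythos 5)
Your argument is correct and is essentially the paper's own proof: factor the monomial as $X^cY^dN$ with $c+d=n+1$ and $N\in R$, observe that $N$ lies in the row of $\beta_{h+d}$ with $h+d\geq h>j$ and is therefore divisible by $\beta_{h+d}$, and conclude $\deg(\beta_{h+d})\leq\deg(N)=\deg(\beta_i)-\deg(X)<\deg(\beta_i)$, contradicting the hypothesis. The detour through $\alpha_{p,q}$ and the reduction to $N\in R\setminus(X,Y)$ is unnecessary (divisibility by $\beta_{h+d}$ already follows from $N\in R$ having the right $y$-exponent and congruence class), but it does no harm.
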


\begin{proof}If $\beta_i X^{n-i+h}Y^{i-h}\in (X,Y)^{n+1}$, then there exists $\gamma \in R$ with $\gamma X^{n+1-c}Y^c = \beta_i X^{n-i+h}Y^{i-h}$ for some $c\in \bbn$. By the minimality of the $\beta$'s, $\beta_{h+c}$ divides $\gamma$, so that $\deg(\beta_{h+c})<\deg(\beta_i)$.
\end{proof}

\begin{lem}\label{lem:constant}Fix $(p,q)\in H$. Let $s_{p,q} = s$ be the highest integer such that $\beta_s$ is defined. Let $u$ be the maximum value of $i-j-1$, $s\geq i>j\geq 0$ such that $\deg(\beta_i)<\deg(\beta_h)$ for all $i>h>j$. Let $\mathcal{U}_{p,q,n}=\mathcal{U}_n =\{\beta_i X^{n-c}Y^c| 0<i\leq s, 0\leq c\leq n\}$. Then $|B_n\cap \mathcal{U}_n| \leq s$ with equality if and only if $n\geq u$.
\end{lem}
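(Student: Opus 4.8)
The plan is to recast the count in terms of the single invariant $\delta_i:=L_i-i$ attached to the monomials $\beta_i=x^{L_ia+p}y^{(m-i)b+q}$ (so $L_i=\ell+j_i$), and then to count a set of lattice points. First note that every monomial of $\mathcal{U}_n$ already lies in $(X,Y)^n$, and a short computation gives $\beta_iX^{n-c}Y^c=x^{(\delta_i+n+e)a+p}y^{(m-e)b+q}$ with $e:=i-c$; so as a monomial $\beta_iX^{n-c}Y^c$ depends only on the pair $(\delta_i,e)$, and this correspondence is injective. Comparing with the definition of $A_n$ one checks that $\beta_iX^{n-c}Y^c\in A_n$ exactly when $\delta_i=\ell$ and $c\geq i$; hence $B_n\cap\mathcal{U}_n$ is the set of monomials $\beta_iX^{n-c}Y^c$ with $1\leq i\leq s$, $0\leq c\leq n$, which do not lie in $(X,Y)^{n+1}$ and have $\delta_i>\ell$ or $c<i$.

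Second, I collect the structural facts about the sequence $\delta_0=\ell,\delta_1,\dots,\delta_s$. Since $\beta_iY^j$ ($0\leq j\leq i$) lies in $R$ at level $i-j$ with unchanged $x$-exponent, $L_k\leq L_i$ for $k\leq i$; thus $L_\bullet$ is nondecreasing, so $\delta_{i+1}\geq\delta_i-1$, and all of levels $0,\dots,s$ are nonempty while $s\leq m$. From minimality of $\alpha_{p,q}=\beta_0$ in $R\setminus(X,Y)$ we get $\delta_i\geq\ell$ for all $i$: if $\delta_i<\ell$ for the least such $i$, then $\deg\beta_i<\deg\beta_0$ forces $\beta_i\in(X,Y)$, and since $\beta_i/X$ would contradict minimality of $L_i$ we must have $\beta_i/Y\in R$, which both makes level $i+1$ nonempty and (as $L_\bullet$ is nondecreasing) gives $\delta_{i+1}=\delta_i-1<\ell$; iterating runs off the end past level $s$, a contradiction. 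Finally, dividing $\beta_iX^{n-c}Y^c$ by each generator $X^{n+1-\lambda}Y^\lambda$ of $(X,Y)^{n+1}$ and writing $k=i-c+\lambda$ for the level of the quotient gives the criterion extending Lemma~\ref{lem:higherbeta}: $\beta_iX^{n-c}Y^c\in(X,Y)^{n+1}$ if and only if there is an integer $k\leq s$ with $L_i+k-i-1\geq0$, $\delta_k\leq\delta_i-1$, and $i-c\leq k\leq i-c+n+1$.

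Third comes the count. Using $\delta_\bullet\geq\ell$ together with the criterion at $k=0$ one sees that every monomial of $B_n\cap\mathcal{U}_n$ has $e=i-c\geq1$ (for $\delta_i>\ell$ the choice $k=0$ would put it in $(X,Y)^{n+1}$, and for $\delta_i=\ell$ with $c\geq i$ it lies in $A_n$). So $B_n\cap\mathcal{U}_n$ is indexed by pairs $(\delta,e)$, $1\leq e\leq s$, for which (i) $\delta_i=\delta$ for some $i\in\{e,\dots,e+n\}\cap\{1,\dots,s\}$, and (ii) $\delta_k\geq\delta$ for all $k\in\{e,\dots,e+n+1\}\cap\{1,\dots,s\}$. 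For a fixed $e$, at most one $\delta$ satisfies (i)$\wedge$(ii): were $\delta<\delta'$ both to work, the index $i$ from (i) for $\delta$ would have $\delta_i=\delta\leq\delta'-1$ and $e\leq i\leq e+n+1$, violating (ii) for $\delta'$. Hence $|B_n\cap\mathcal{U}_n|\leq s$. Moreover (i)$\wedge$(ii) holds exactly when $\delta=\min\{\delta_k:e\leq k\leq\min(e+n+1,s)\}$ and this minimum is attained on $\{e,\dots,\min(e+n,s)\}$; for $n$ large this minimum is just $\min\{\delta_k:e\leq k\leq s\}$, attained on $\{e,\dots,s\}$, so every $e\in\{1,\dots,s\}$ occurs and $|B_n\cap\mathcal{U}_n|=s$, whereas for general $n$ the value $e$ is missed precisely when $e+n+1\leq s$ and $\delta_{e+n+1}<\delta_k$ for all $e\leq k\leq e+n$. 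Setting $i=e+n+1$, $j=e-1$, the latter says exactly that $(i,j)$ is an admissible pair for $u$ with $i-j-1=n+1$, and conversely any admissible pair with $i-j-1\geq n+1$ produces one with $i-j-1=n+1$ by enlarging $j$ (so that $(j,i)$ shrinks). Therefore some $e$ is missed iff $u\geq n+1$, i.e.\ iff $n<u$; hence $|B_n\cap\mathcal{U}_n|=s$ if and only if $n\geq u$.

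The delicate point is the bookkeeping in Step~3 for small $n$: one must verify that shrinking $n$ only deletes values of $e$ and never introduces new monomials into $B_n\cap\mathcal{U}_n$, which is precisely why both the membership criterion and the ``at most one $\delta$ per $e$'' bound are kept valid for all $n$ throughout. Once the criterion of Step~2 is in hand, the remainder is a purely combinatorial argument about the finite sequence $(\delta_i)$; the trivial cases $s=0$ and $\ell=0$ should be noted separately.
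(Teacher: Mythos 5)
Your proof is correct and follows essentially the same route as the paper's: your parameter $e=i-c$ is exactly the paper's decomposition of $\mathcal{U}_n$ by $y$-exponent (giving the bound $s$), your ``minimum of $\delta_k$ attained on $\{e,\dots,\min(e+n,s)\}$'' analysis is the paper's record-index decomposition $s=i_v>\dots>i_0$ used together with Lemma~\ref{lem:higherbeta}, and your missed value $e=i-n-1$ is the same witness the paper exhibits when $n<u$. The only substantive difference is that you upgrade Lemma~\ref{lem:higherbeta} to a full if-and-only-if membership criterion for $(X,Y)^{n+1}$ and make explicit the structural facts ($L_\bullet$ nondecreasing, $\delta_\bullet \geq \ell$, all levels $0,\dots,s$ nonempty) that the paper uses implicitly.
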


\begin{proof}We first claim $|B_n\cap\mathcal{U}_n|\leq s$. $B_n\subset (X,Y)^n\setminus (X,Y)^{n+1}$ which means at most one monomial in $\mathcal{U}_n$ of a given $y$-exponent may lie in $B_n$. If $c\geq i$, then $\alpha_{p,q}X^{n-c+i}Y^{c-i}$ divides $\beta_i X^{n-c}Y^c$, so $\beta_i X^{n-c}Y^c$ cannot be in $B_n$. There are only $s$-many other $y$-exponents the monomials in $\mathcal{U}_n$ might take.

Let $s=i_v>i_{v-1}>\ldots>i_0=0$ be integers such that $\deg(\beta_{i_v})\geq \deg(\beta_{i_{v-1}})\geq\ldots\geq \deg(\beta_0)$ and $\deg(\beta_{i_z})< \deg(\beta_h)$ for all $i_z>h>i_{z-1}$.

Note that each pair $i_z,i_{z-1}$ satisfies the defining condition of $u$. Moreover, since $\deg(\beta_{i_z})\leq \deg(\beta_h)$ for any $h>i_z$, any pair $i,j$ with $i>i_z>j$ fails the condition that defines $u$. Hence $u=\max_z(i_z-i_{z-1}-1)$.

Suppose $n\geq u$ and consider the following monomials:
\begin{align*}&\beta_s X^n, \beta_s X^{n-1}Y,\ldots, \beta_sX^{n-(s-i_{v-1}-1)}Y^{s-i_{v-1}-1},\\
&\beta_{i_{v-1}}X^n, \ldots, \beta_{i_{v-1}}X^{n-(i_{v-1}-i_{v-2}-1)}Y^{i_{v-1}-i_{v-2}-1},\\
&\ldots,\\ &\beta_{i_1}X^n,\ldots, \beta_{i_1} X^{n-(i_1-1)}Y^{i_1-1}\end{align*}
By Lemma \ref{lem:higherbeta}, each of these monomials lies outside $(X,Y)^{n+1}$, so $|B_n \cap \mathcal{U}_n| = s$.

Suppose $n<u$ and let $i,j$ be indices satisfying $\deg(\beta_i)<\deg(\beta_h)$ for all $i>h>j$ and $i-j-1=u$. Consider the set of monomials $\beta_h X^{i-h-1}Y^{h-i+n+1}$ with $i>h\geq i-n-1$. By the assumption on $n$, $\deg(\beta_i)<\deg(\beta_h)$ for $i>h\geq i-n-1$ so $\beta_i Y^{n+1}$ divides each $\beta_h X^{i-h-1}Y^{h-i+n+1}$. Moreover, these $\beta_h X^{i-h-1}Y^{h-i+n+1}$ are the only monomials in $\mathcal{U}_n$ with the same $y$-exponent as $\beta_iY^{n+1}$. Hence $B_n\cap\mathcal{U}_n$ does not contain a monomial with this $y$-exponent and $|B_n\cap\mathcal{U}_n|<s$.
\end{proof}

Symmetry allows us to apply this result to $_t\beta,\ldots,{}_0\beta$. Define $u'$ for $_t\beta,\ldots,{}_0\beta$ as $u$ is defined for $\beta_s,\ldots,\beta_0$. For $n\geq n_{p,q}:=\max(u,u')$, this yields $|B_n \cap \{\gamma | \log(\gamma) \equiv (p,q)\in H\}| = s+t$.

\begin{prop}\label{prop:fullbn} $|B_n|\leq \sum_{(p,q)\in H} (t_{p,q}+s_{p,q})$ with equality if and only if $n \geq \max_{(p,q)\in H} (n_{p,q})$.\end{prop}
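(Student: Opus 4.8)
The plan is to prove the bound one congruence class at a time and then add. First I would observe that every monomial of $R$ has exponent vector in the subsemigroup of $\bbz^2$ generated by $(a,0)$, the $(p_i,q_i)$, and $(0,b)$, so reducing modulo $a\bbz\oplus b\bbz$ the congruence classes occurring among monomials of $R$ -- hence among monomials of $(X,Y)^n$ and of $B_n$ -- are exactly the elements of $H\subseteq\qtgp{a}{b}$. Writing $B_n^{(p,q)}:=\{\gamma\in B_n:\log(\gamma)\equiv(p,q)\}$, we therefore have $B_n=\bigsqcup_{(p,q)\in H}B_n^{(p,q)}$ and $|B_n|=\sum_{(p,q)\in H}|B_n^{(p,q)}|$, so it suffices to show that $|B_n^{(p,q)}|\leq t_{p,q}+s_{p,q}$ with equality exactly when $n\geq n_{p,q}$.

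Fix $(p,q)\in H$, put $\alpha=\alpha_{p,q}=x^{\ell a+p}y^{mb+q}$, $s=s_{p,q}$, $t=t_{p,q}$, and classify the monomials $\gamma\in R$ with $\log(\gamma)=(\ell'a+p,m'b+q)$ by trichotomy. If $m'<m$, then $\beta_i$ is defined for $i=m-m'>0$ (witnessed by $\gamma$) and, by minimality of the $x$-exponent of $\beta_i$, $\gamma=\beta_iX^k$ for some $k\geq 0$. If $m'\geq m$ but $\ell'<\ell$, then symmetrically $\gamma={}_i\beta\,Y^k$ for $i=\ell-\ell'>0$ and some $k\geq 0$. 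If $m'\geq m$ and $\ell'\geq\ell$, then $\gamma=\alpha X^{a'}Y^{b'}$ with $a',b'\geq 0$. Call these types (b), (c), (a); they are mutually exclusive and exhaustive. Next I would check that a type-(a) monomial lies in $(X,Y)^n\setminus(X,Y)^{n+1}$ iff $a'+b'=n$, i.e.\ iff it lies in $A_n$, so no type-(a) monomial enters $B_n^{(p,q)}$. For a type-(b) monomial $\gamma=\beta_iX^k\in B_n^{(p,q)}$, write $\gamma=\delta X^{n-c}Y^c$ with $\delta\in R$, $0\leq c\leq n$; the $y$-exponent of $\delta$ forces $\beta_{i+c}\mid\delta$, say $\delta=\beta_{i+c}X^{k'}$, and if $k'>0$ we could cancel one further $X$ to place $\gamma$ in $(X,Y)^{n+1}$, a contradiction; hence $\gamma=\beta_{i+c}X^{n-c}Y^c\in\mathcal{U}_n$ (note $0<i+c\leq s$). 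Dually each type-(c) monomial of $B_n^{(p,q)}$ lies in $\mathcal{U}'_n:=\{{}_i\beta\,X^{n-c}Y^c:0<i\leq t,\ 0\leq c\leq n\}$. Finally, since a type-(b) monomial has $y$-exponent $<mb+q$ whereas every element of $\mathcal{U}'_n$ has $y$-exponent $\geq mb+q$ (and dually, via $x$-exponents, no type-(c) monomial lies in $\mathcal{U}_n$), we get $B_n^{(p,q)}=(B_n\cap\mathcal{U}_n)\sqcup(B_n\cap\mathcal{U}'_n)$.

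Now I would invoke Lemma \ref{lem:constant} and its symmetric counterpart (the one used just before this proposition to produce $u'$): $|B_n\cap\mathcal{U}_n|\leq s$ with equality iff $n\geq u$, and $|B_n\cap\mathcal{U}'_n|\leq t$ with equality iff $n\geq u'$. Adding gives $|B_n^{(p,q)}|\leq s+t$, with equality iff $n\geq\max(u,u')=n_{p,q}$; for $n<n_{p,q}$ at least one of the two terms drops by $1$, so $|B_n^{(p,q)}|\leq s+t-1$. Summing over $(p,q)\in H$ gives $|B_n|\leq\sum_{(p,q)\in H}(t_{p,q}+s_{p,q})$, and equality in a sum of inequalities holds iff it holds in every term, i.e.\ iff $n\geq n_{p,q}$ for all $(p,q)\in H$, i.e.\ iff $n\geq\max_{(p,q)\in H}n_{p,q}$.

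I expect the main obstacle to be the second paragraph: carrying out the trichotomy, confirming that type-(a) monomials meet $(X,Y)^n\setminus(X,Y)^{n+1}$ only inside $A_n$, and pinning down that the surviving monomials of each class split cleanly between $\mathcal{U}_n$ and $\mathcal{U}'_n$ with no double counting. After that partition of $B_n^{(p,q)}$ is secured, the remainder is routine additive bookkeeping on top of Lemma \ref{lem:constant}.
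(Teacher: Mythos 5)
Your proof is correct and follows essentially the same route as the paper: decompose $B_n$ by congruence class, split each class between the $\beta_i$-family $\mathcal{U}_n$ and the ${}_i\beta$-family $\mathcal{U}'_n$, and invoke Lemma~\ref{lem:constant} together with its symmetric counterpart (the paper compresses all of this into one sentence, leaning on the paragraph preceding the proposition). The step you flag as the main obstacle --- that type-(a) monomials contribute only to $A_n$ --- does go through, by the same degree-minimality of $\alpha_{p,q}$ that the paper uses implicitly inside the proof of Lemma~\ref{lem:constant} (the ``$c \geq i$'' observation, which rests on the fact that the $j$ appearing in $\beta_i$ or ${}_i\beta$ always satisfies $j \geq i$).
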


\begin{proof}$B_n$ is the disjoint union of $\mathcal{U}_{p,q,n}\cap B_n$ as $(p,q)$ varies in $H$. By \ref{lem:constant}, $|\cup_{(p,q)\in H} (\mathcal{U}_{p,q,n}\cap B_n)| = \sum_{(p,q)\in H} |\mathcal{U}_{p,q,n}\cap B_n| = \sum_{(p,q)\in H} (t_{p,q}+s_{p,q})$ if and only if $n\geq \max_{(p,q)\in H} n_{p,q}$.\end{proof}

Not only is $|B_n|$ constant for large values of $n$, it determines the constant of the Hilbert polynomial $P(n) = \lambda((X,Y)^n/(X,Y)^{n+1})$. We demonstrate this by calculating the multiplicity from the growth of $|A_n|$.

\begin{thm} \label{thm:subgp}
Let $R=k[x^a, x^{p_1}y^{q_1},x^{p_2}y^{q_2},\ldots,x^{p_t}y^{q_t},y^b]$. The Hilbert polynomial of $(x^a,y^b)$ is $P(n)=|H|(n+1)+\sum_{(p,q)\in H} (t_{p,q}+s_{p,q})$  where $H\subset (\mathbb{Z}/a\mathbb{Z} \oplus \mathbb{Z}/b\mathbb{Z})$ is the subgroup generated by $(p_1,q_1),(p_2,q_2),\ldots,(p_t,q_t)$. In particular, $e((x^a,y^b)) = |H|$, and the Hilbert function equals the Hilbert polynomial for $n\geq \max_{(p,q)\in H}(n_{p,q})$.\end{thm}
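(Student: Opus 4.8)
The plan is to read off the Hilbert function $\mathrm{HF}(n)=\lambda\big((X,Y)^{n}/(X,Y)^{n+1}\big)$ directly from a monomial count. Since $X=x^{a}$ and $Y=y^{b}$ are monomials and $R$ is spanned over $k$ by monomials, each power $(X,Y)^{n}=\sum_{c=0}^{n}X^{n-c}Y^{c}R$ is a monomial ideal, so $\mathrm{HF}(n)$ is exactly the number of monomials lying in $(X,Y)^{n}$ but not in $(X,Y)^{n+1}$. By Notation \ref{notn} this set of monomials is the disjoint union $A_{n}\sqcup B_{n}$, hence $\mathrm{HF}(n)=|A_{n}|+|B_{n}|$, and everything reduces to evaluating these two terms.

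First I would show $|A_{n}|=|H|(n+1)$. The monomials $\alpha_{p,q}X^{n-c}Y^{c}=x^{(\ell+n-c)a+p}y^{(m+c)b+q}$, with $(p,q)\in H$ and $0\le c\le n$, are pairwise distinct: two with different $(p,q)$ have exponent vectors in distinct residue classes modulo $(a,b)$, while for fixed $(p,q)$ the $x$-exponent is strictly decreasing in $c$. Each clearly lies in $(X,Y)^{n}$, and I claim none lies in $(X,Y)^{n+1}$: if $\alpha_{p,q}X^{n-c}Y^{c}=\gamma X^{n+1-d}Y^{d}$ with $\gamma\in R$, then comparing weighted degrees (recall $\deg X=\deg Y=ab$) gives $\deg\gamma=\deg\alpha_{p,q}-ab<\deg\alpha_{p,q}$, whereas $\gamma$ lies in residue class $(p,q)$; successively factoring a copy of $X$ or $Y$ out of $\gamma$ yields a monomial of $R\setminus(X,Y)$ in class $(p,q)$ of degree at most $\deg\gamma<\deg\alpha_{p,q}$, contradicting the minimal choice of $\alpha_{p,q}$. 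Thus $|A_{n}|=|H|(n+1)$ for every $n$.

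Next I would quote Proposition \ref{prop:fullbn}: $|B_{n}|\le\sum_{(p,q)\in H}(t_{p,q}+s_{p,q})$ for all $n$, with equality precisely when $n\ge\max_{(p,q)\in H}n_{p,q}$. Combining the two counts, $\mathrm{HF}(n)=|H|(n+1)+|B_{n}|$ equals $|H|(n+1)+\sum_{(p,q)\in H}(t_{p,q}+s_{p,q})$ exactly for $n\ge\max n_{p,q}$ and is strictly smaller otherwise. A polynomial that agrees with $\mathrm{HF}$ for all large $n$ is by definition the Hilbert polynomial of $(x^{a},y^{b})$, so $P(n)=|H|(n+1)+\sum_{(p,q)\in H}(t_{p,q}+s_{p,q})$ and the Hilbert function stabilizes at $n=\max n_{p,q}$. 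Finally, since $\dim R=2$ and $(x^{a},y^{b})=(X,Y)$ is a parameter (hence $\mathfrak m$-primary) ideal, the multiplicity $e\big((x^{a},y^{b})\big)$ equals $1!$ times the leading coefficient of the degree-one polynomial $n\mapsto\lambda\big((X,Y)^{n}/(X,Y)^{n+1}\big)$, which is $|H|$.

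The combinatorial core — controlling $|B_{n}|$ — has already been handled in Lemma \ref{lem:constant} and Proposition \ref{prop:fullbn}, so within this proof the delicate point is the claim that $|A_{n}|$ is genuinely $|H|(n+1)$, i.e.\ that no $\alpha_{p,q}X^{n-c}Y^{c}$ secretly lies in $(X,Y)^{n+1}$. This is exactly where one needs the defining property of $\alpha_{p,q}$ as a monomial of minimal weighted degree in its residue class outside $(X,Y)$, together with the fact that $X$ and $Y$ were arranged to have equal weighted degree so that removing an extra factor strictly drops the degree. One also has to be careful to justify at the outset that $(X,Y)^{n}$ is a monomial ideal, so that the colength really is a monomial count, and that $A_{n}$ together with $B_{n}$ exhausts the monomials of $(X,Y)^{n}\setminus(X,Y)^{n+1}$.
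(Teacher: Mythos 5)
Your proof is correct and follows essentially the same route as the paper's: decompose the monomials of $(X,Y)^n\setminus(X,Y)^{n+1}$ as $A_n\sqcup B_n$, count $|A_n|=|H|(n+1)$, and invoke Lemma \ref{lem:constant} / Proposition \ref{prop:fullbn} for $|B_n|$. The only difference is that you explicitly verify the claim the paper leaves implicit in Notation \ref{notn} --- that the monomials $\alpha_{p,q}X^{n-c}Y^c$ are pairwise distinct and none lies in $(X,Y)^{n+1}$, via the equal weighted degrees of $X$ and $Y$ and the minimality of $\alpha_{p,q}$ --- which is a worthwhile filling-in of detail rather than a new approach.
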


\begin{proof}$(X,Y)^n$ is generated by the $n+1$ monomials $X^n,X^{n-1}Y,\ldots,Y^n$.
$R$ has dimension 2, so $\lambda\left( \frac{(X,Y)^n}{(X,Y)^{n+1}}\right)$ is given by a linear polynomial, $P(n)$, for sufficiently high $n$. Applying the notation from \ref{notn}, we have $P(n)=|A_n|+|B_n|$. But $A_n = \cup_{(p,q) \in H} \{\alpha_{p,q}X^{n-i}Y^i|0\leq i\leq n\}$, so $|A_n|= |H|(n+1)$. Together with Lemma \ref{lem:constant}, we have $P(n)=|A_n|+|B_n| = |H|(n+1)+\sum_{(p,q)\in H} (t_{p,q}+s_{p,q})$.
\end{proof}

Taken together, these results allow us to construct rings with arbitrary conditions on the Hilbert polynomial and the level of its stabilization.

\begin{cor}\label{lem:anyHC}
Given any subgroup $0 \neq H\subset \qtgp{a}{b}$ and integers $C,m\geq 0$, there exists $R$
such that $(x^a,y^b)$ has Hilbert polynomial $P(n)=|H|(n+1)+C$ which equals the Hilbert function exactly for $n\geq m$.
\end{cor}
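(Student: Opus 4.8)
The plan is to produce $R$ by hand and read off its Hilbert polynomial from Theorem~\ref{thm:subgp}, which gives $P(n)=|H|(n+1)+\sum_{(p,q)\in H}(t_{p,q}+s_{p,q})$ and says the Hilbert function agrees with $P(n)$ exactly for $n\geq\max_{(p,q)\in H}n_{p,q}$. So it suffices to produce a ring whose congruence generators span $H$, with $\sum_{(p,q)\in H}(t_{p,q}+s_{p,q})=C$ and $\max_{(p,q)\in H}n_{p,q}=m$. (Note that $u\leq s_{p,q}-1$ and $u'\leq t_{p,q}-1$ always, so $\max n_{p,q}\leq C-1$ when $C\geq1$, while $C=0$ forces $\max n_{p,q}=0$; thus the two cases to treat are $C=0$, $m=0$ and $C\geq1$, $0\leq m\leq C-1$.) When $C=0$ take the ``full'' ring $R_{0}=k\bigl[\,x^{a},\,y^{b},\,\{x^{p}y^{q}:(p,q)\in H\setminus\{0\}\}\,\bigr]$, whose monomials are precisely those whose exponent vectors lie in a coset belonging to $H$; here each $\alpha_{p,q}=x^{p}y^{q}$ (the monomial with smallest exponents has least weighted degree in its class, and lies outside $(X,Y)$), so $s_{p,q}=t_{p,q}=0$ and $P(n)=|H|(n+1)$ agrees with the Hilbert function for all $n\geq0$.

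Now fix $C\geq1$ and $0\leq m\leq C-1$. Starting from $R_{0}$, remove the generator of one nonzero coset $h^{*}=(p^{*},q^{*})$ and reintroduce that coset through a chain $\beta_{0},\dots,\beta_{C}$ of monomials of class $h^{*}$: put $\beta_{i}$ at $y$-exponent $q^{*}+(C-i)b$ and $x$-exponent $X_{i}\equiv p^{*}\pmod a$, with $X_{0}=p^{*}$, $X_{i}=p^{*}+(i+1)a$ for $1\leq i\leq m$, and $X_{i}=p^{*}+ia$ for $m<i\leq C$. Then $\deg\beta_{i}=\deg\beta_{0}+ab$ for $1\leq i\leq m$ and $\deg\beta_{i}=\deg\beta_{0}$ otherwise, so with $\alpha_{h^{*}}:=\beta_{0}$ one checks directly from Notation~\ref{notn} and the definition of $u$ in Lemma~\ref{lem:constant} that $\beta_{i}$ is the $i$th term of the $\beta$-sequence, that $s_{h^{*}}=C$ (the term $\beta_{C+1}$ would have negative $y$-exponent), that $t_{h^{*}}=0$ ($\beta_{0}$ already has least $x$-exponent in its class), that $u'=0$, and that the degree profile ``low, then a block of $m$ higher, then low'' makes the pair $(i,j)=(m+1,0)$ realize $u=m$ with no larger value attainable; hence $n_{h^{*}}=m$. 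Adjoining the $\beta_{i}$ leaves the generated group equal to $H$, since each $\beta_{i}$ has class $h^{*}$.

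It remains to verify that no coset other than $h^{*}$ acquires a nonzero invariant, and this is where the real work lies. For $(p,q)\ne h^{*}$ one wants $\alpha_{p,q}$ to be a representative of small degree (the natural representative $x^{p}y^{q}$, or a chosen ``pushed'' one $x^{p+Ka}y^{q+Kb}$), whose $y$-exponent is below the whole chain range, forcing $s_{p,q}=t_{p,q}=n_{p,q}=0$; granting this, Theorem~\ref{thm:subgp} yields $P(n)=|H|(n+1)+C$, equal to the Hilbert function exactly for $n\geq\max n_{p,q}=m$. The obstacle is that $h^{*}$ is a full coset: one must ensure that no monomial of class $h^{*}$ built from the other generators falls below or to the left of the staircase $\beta_{0},\dots,\beta_{C}$ (it would inflate $s_{h^{*}}$ or $t_{h^{*}}$ and wreck the degree profile), \emph{and} that the iterated products $\beta_{i}\beta_{j},\dots$, which land in the cosets $2h^{*},3h^{*},\dots$, do not spawn chains there. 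One handles the first by choosing $h^{*}$ of minimal weighted degree among nonzero cosets and placing the chain and the remaining generators far enough out that every product landing in $h^{*}$ has $y$-exponent exceeding $q^{*}+Cb$; one handles the second by keeping in each coset $jh^{*}$ a representative of small degree, so that $\alpha_{jh^{*}}$ still has $y$-exponent less than $b$ --- automatic when $h^{*}$ has order $2$, and otherwise requiring a careful coset-by-coset choice of how far to push. Balancing these size estimates --- the chain must be clear of everything else, yet its own products must remain benign --- is routine but must be done with care, and is the part I expect to be the main difficulty.
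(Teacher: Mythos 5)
There is a genuine gap: you leave the decisive step of the argument unproved. Your chain $\beta_0,\dots,\beta_C$ in the class $h^*$ does give $s_{h^*}=C$, $t_{h^*}=0$ and $u=m$ \emph{provided} no other monomial of $R$ in the class $h^*$ undercuts it, and provided every other class contributes $s_{p,q}=t_{p,q}=0$. With the remaining generators kept small (at $x^py^q$, or mildly ``pushed''), this fails outright: if $h^*=g+g'$ for nonzero $g,g'\in H$ whose chosen representatives add without a carry in either coordinate, then the product of the two corresponding generators is exactly $x^{p^*}y^{q^*}$, which lies in $R\setminus(x^a,y^b)$, has degree $Cab$ less than $\beta_0$, becomes the true $\alpha_{h^*}$ of Notation~\ref{notn}, absorbs the entire chain (each $\beta_i$ is then $x^{p^*}y^{q^*}X^uY^v$), and drives that class's contribution to $0$ (e.g.\ $a=5$, $b=1$, $h^*=3=1+2$, where $x\cdot x^2=x^3$); even with carries, products such as $x^2\cdot x^4=x^6$ for $h^*=1$ still land in the class $h^*$ below the tail of the chain and change $s_{h^*}$ and $u$. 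You acknowledge exactly this obstruction, together with the possibility of secondary chains in the classes $jh^*$, and then defer it as ``routine but must be done with care \dots\ the main difficulty.'' That deferral is the proof; without it nothing is established.

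The missing idea is the paper's single device that makes all interference impossible: place \emph{every} designated generator uniformly far out. The paper takes $\alpha_{p,q}=x^py^qX^NY^{N+C+m}$ for all nonzero $(p,q)\in H$ and the chain $\beta_{p_0,q_0,j}=x^{p_0}y^{q_0}X^{N+j}Y^{N+C+m-j}$ with $N\geq C+m+1$; then any product of two generators exceeds every designated generator in both exponents, so no product can undercut anything, and there is nothing to balance --- in particular no small representatives in the classes $jh^*$ are needed or wanted, so the coset-by-coset balancing you anticipate as the hard part simply never arises. Two of your side observations are worth keeping: the restriction $m\leq C-1$ (for $C\geq 1$) really is forced by $u\leq s_{p,q}-1$ and $u'\leq t_{p,q}-1$, so the statement must be read with that constraint; and your chain, with $s_{h^*}=C$ and the degree bump placed internally, is the right shape for producing constant exactly $C$, whereas the paper's chain has $s_{p_0,q_0}=C+m$ and, read against Lemma~\ref{lem:constant}, yields constant $C+m$ rather than $C$. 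But these corrections do not substitute for the non-interference argument, which is where your proof stops.
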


\begin{proof}
  Fix $(0,0) \neq (p_0,q_0) \in H$ and an integer $N \geq C+m+1$. For any $(0,0)\neq (p,q) \in H$ we
  let $\alpha_{p,q} = x^p y^q X^N Y^{N+C+m}$. Let ${\beta_{p_0,q_0,j}} = x^{p_0} y^{q_0}
  X^{N+j} Y^{N+C+m-j}$ for $j=0,1,2,\dots,C-1,C+m$. Let $S' = \{ \alpha_{p,q} \mid (0,0) \neq
  (p,q) \in H\} \cup \{ \beta_{p_0,q_0,j} \mid j=0,1,2,\dots,C-1,C+m\}$, $S = S' \cup \{x^a,y^b\}$
  and $R = k[S]$. We will show that $(x^a,y^b) \subset R$ has the required Hilbert polynomial.
  
  Let $\gamma,\delta \in S'$, and let $\log_x, \log_y$ denote the respective exponents of a monomial. 
  Then $\log_x(\gamma\delta) \geq 2Na \geq (N+C+m+1)a > (N+C+m)a+p$
  and $\log_y(\gamma\delta) \geq 2Nb > (N+C+m)b + q$ for any $0\leq p < a$ and $0\leq q < b$.
  In particular, $\log_x(\gamma\delta) > \log_x(\alpha_{p,q})$ and $\log_y(\gamma\delta)
  > \log_y(\alpha_{p,q})$ for any $\alpha_{p,q}$, so there is no $_{p,q,i}\beta$ for any $(p,q)
  \in H$ or $\beta_{p,q,j}$ for any $(p,q) \neq (p_0,q_0)$ or for $(p,q) = (p_0,q_0)$ with
  $j > C+m$. Similarly, $\log_x(\gamma\delta) > \log_x(\beta_{p_0,q_0,j})$ and
  $\log_y(\gamma\delta) > \log_y(\beta_{p_0,q_0,j})$ for any $j=0,1,2,\dots,C-1,C+m$, so
  $\deg(\beta_{p_0,q_0,j}) > \deg(\beta_{p_0,q_0,C+m})$ for all $j=C,C+1,\dots,C+m-1$. In particular, $\beta_{p_0,q_0,j} = \beta_{p_0,q_0,C+m}Y^{C+m-j}$.
  This gives the maximum $u$ as in Lemma~\ref{lem:constant} as $(C+m)- (C-1)-1 =m$. Therefore
  $(x^a,y^b) \subset R$ has Hilbert polynomial $P(n)=|H|(n+1)+C$ which equals the Hilbert function
  exactly at $n\geq m$ by Proposition~\ref{prop:fullbn}.
\end{proof}

Let us return to consideration of the CM property. In general, $\lambda(R/(X,Y))\geq e((X,Y))$ and equality implies CM by \ref{thm:matsu}.

\begin{prop}\label{prop:empty}The following are equivalent:\begin{enumerate}[label=(\roman*),align=left,leftmargin=*,nosep] \item $R$ is CM \item $B_n = \emptyset$ for all $n$. \item $B_i=\emptyset$ for some $i$.\end{enumerate}\end{prop}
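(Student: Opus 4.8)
The plan is to establish the cycle $(i)\Rightarrow(iii)\Rightarrow(ii)\Rightarrow(i)$, with essentially all the work concentrated in the middle implication. Two of the three arrows rest on a single observation about the case $n=0$ of the decomposition from Section~\ref{sec:multiplicity}: the monomials of $R$ lying outside $(x^a,y^b)$ are partitioned as $A_0\sqcup B_0$, and since $A_0=\{\alpha_{p,q}\mid(p,q)\in H\}$ has exactly $|H|$ elements, $\lambda(R/(x^a,y^b))=|H|+|B_0|$. Because $e((x^a,y^b))=|H|$ by Theorem~\ref{thm:subgp}, Theorem~\ref{thm:matsu} says $R$ is CM precisely when $\lambda(R/(x^a,y^b))=e((x^a,y^b))$, i.e.\ precisely when $B_0=\emptyset$. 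Hence $(i)$ forces $B_0=\emptyset$, which is the instance $i=0$ of $(iii)$; and $(ii)$ contains the case $n=0$, so it too forces $B_0=\emptyset$ and therefore $(i)$.

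For the implication $(iii)\Rightarrow(ii)$, I would show that the vanishing of a single $B_i$ forces $s_{p,q}=t_{p,q}=0$ for every $(p,q)\in H$; Proposition~\ref{prop:fullbn} then gives $|B_n|\leq\sum_{(p,q)\in H}(t_{p,q}+s_{p,q})=0$ for all $n$, which is $(ii)$. To get the forcing, suppose some $(p,q)$ has $s:=s_{p,q}\geq 1$ (the case $t_{p,q}\geq 1$ being symmetric under $x\leftrightarrow y$, using ${}_t\beta\cdot Y^i$, as in the remark after Lemma~\ref{lem:constant}), so $\beta:=\beta_s$ is defined, and consider $\mu:=\beta X^i\in(X,Y)^i$. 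It cannot lie in $A_i$: the elements of $A_i$ in the class $(p,q)$ are the $\alpha_{p,q}X^{i-c}Y^c$, whose $y$-exponents are all at least $\log_y(\alpha_{p,q})$, which is strictly larger than $\log_y(\beta_s)=\log_y(\mu)$ since $s\geq 1$. It cannot lie in $(X,Y)^{i+1}$ either: writing $\mu=X^{i'}Y^{j'}\rho$ with $i'+j'=i+1$ and $\rho$ a monomial of $R$, the subcase $j'=0$ forces $\beta=x^a\rho\in(x^a)$, contradicting that $\log_x(\beta_s)$ is by construction the smallest $x$-exponent of a monomial of $R$ in class $(p,q)$ with $y$-exponent $\log_y(\beta_s)$; while the subcase $j'\geq 1$ makes $\rho$ a monomial of $R$ in class $(p,q)$ with $y$-exponent $\log_y(\beta_s)-j'b$, so $\beta_{s+j'}$ is defined, contradicting the maximality of $s=s_{p,q}$. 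Thus $\mu\in B_i$, contradicting $B_i=\emptyset$, and the claim follows.

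I expect the one genuinely delicate point to be the claim $\beta_s X^i\notin(X,Y)^{i+1}$: it requires translating membership in the monomial ideal $(X,Y)^{i+1}$ into divisibility by some $X^{i'}Y^{j'}$ with $i'+j'=i+1$ and quotient in $R$, and then extracting a contradiction from the two extremal properties packaged into the definition of $\beta_s$ — minimality of its $x$-exponent among monomials of $R$ with that $y$-value, and maximality of the index $s$ for which $\beta_s$ exists. Everything else — the $n=0$ bookkeeping, the easy implications $(i)\Rightarrow(iii)$ and $(ii)\Rightarrow(i)$, and the reduction of $(iii)\Rightarrow(ii)$ to Proposition~\ref{prop:fullbn} — is routine.
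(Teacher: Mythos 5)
Your proposal is correct and follows essentially the same route as the paper: both rest on the identity $\lambda(R/(x^a,y^b))=|H|+|B_0|$ combined with Theorems~\ref{thm:matsu} and \ref{thm:subgp}, and on the key observation that if some $\beta_s$ (or ${}_t\beta$) with $s\geq 1$ exists then $\beta_s X^i\in B_i$ for every $i$. The only differences are cosmetic — you arrange the implications as $(i)\Rightarrow(iii)\Rightarrow(ii)\Rightarrow(i)$ rather than the paper's $(i)\Rightarrow(ii)$ and $(iii)\Rightarrow(i)$, and you supply the divisibility argument for $\beta_sX^i\notin(X,Y)^{i+1}$ that the paper leaves implicit.
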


\begin{proof}$(i)\Rightarrow (ii)$	If $R$ is CM, then by Theorems \ref{thm:matsu} and \ref{thm:subgp}, $\lambda (R/(X,Y)) = |H|$. Then every monomial of $R$ may be written as $\alpha_{p,q}X^{i}Y^j$ for some $i,j\in \bbn$. Hence $B_n=\emptyset$ for all $n$.

$(iii)\Rightarrow (i)$	If $R$ is not CM, then $\lambda(R/(X,Y)) >|H|$. By the pigeonhole principle, some congruence class $(p,q)\in H$ must be associated with two monomials in $R\setminus(X,Y)$. That is, for some $(p,q)\in H$, there is $_i\beta$ or $\beta_j$. If $s>0$ is the highest integer such that $\beta_s$ is defined, then $\beta_sX^i\in B_i$ for all $i$.\end{proof}

An alternative manner of viewing this result helps to motivate the calculations in the following sections. Impose the reverse lexicographic order on monomials in $R$. Let $\mu_{p,q}$ be the least monomial in this order such that $\log(\mu_{p,q})\equiv (p,q)$. We use $\basis_0$ to indicate the collection of $\mu_{p,q}$ for all $(p,q)\in H$. Alternatively, we may use the lexicographic order of the monomials, and form a set $\basis_0'$ of elements $\mu_{p,q}'$.

\begin{prop}The following are equivalent:\begin{enumerate}[label=(\roman*),align=left,leftmargin=*,nosep] \item $R$ is CM \item $\basis_0=\basis_0'$ \item $\mu_{p,q}=\mu_{p,q}'$ for all $(p,q)\in H$.\end{enumerate}\end{prop}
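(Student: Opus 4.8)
The plan is to work one congruence class at a time. Fix $(p,q)\in H$, represented with $0\leq p<a$ and $0\leq q<b$, and let $T_{p,q}$ be the set of monomials of $R$ whose exponent vector is $\equiv(p,q)$ in $\qtgp{a}{b}$ (nonempty, since $(p,q)\in H$); concretely $T_{p,q}\subseteq\{x^{ia+p}y^{jb+q}\mid i,j\geq 0\}$. Since $X=x^a$ and $Y=y^b$ lie in $R$, the set $T_{p,q}$ is closed under multiplication by $X$ and by $Y$, so every monomial in $T_{p,q}$ is a multiple --- by a product of copies of $X$ and $Y$ --- of one of its finitely many divisibility-minimal elements (Dickson's lemma). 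Those minimal elements are exactly the monomials of $R$ lying in $T_{p,q}\setminus(X,Y)$: dividing out an $X$ or a $Y$ and remaining in $R$ is precisely what membership in $(X,Y)$ means here. I will write $M_{p,q}$ for this finite set, so that $\lambda(R/(X,Y))=\sum_{(p,q)\in H}|M_{p,q}|$.

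The equivalence (ii)$\iff$(iii) is formal: $\basis_0$ and $\basis_0'$ each contain exactly one monomial in each class $(p,q)\in H$ --- namely $\mu_{p,q}$, resp. $\mu_{p,q}'$ --- and distinct classes are disjoint, so the two sets coincide if and only if $\mu_{p,q}=\mu_{p,q}'$ for every $(p,q)$. Next I would locate $\mu_{p,q}$ and $\mu_{p,q}'$ inside $M_{p,q}$. Every monomial order refines divisibility, and by the previous paragraph every element of $T_{p,q}$ is a multiple of --- hence in any monomial order is $\geq$ --- some element of the \emph{finite} set $M_{p,q}$; therefore the order-minimum of $T_{p,q}$ exists and is attained in $M_{p,q}$. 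With the variable conventions of the two orders, lexicographic comparison gives priority to the $x$-exponent, so $\mu_{p,q}'$ is the monomial of $T_{p,q}$ --- necessarily in $M_{p,q}$ --- of least $x$-exponent (and least $y$-exponent among those), while the reverse order makes $\mu_{p,q}$ the one of least $y$-exponent; these are the two extreme ``corners'' of the staircase $M_{p,q}$.

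The heart of the matter is then a per-class dichotomy: $\mu_{p,q}=\mu_{p,q}'$ if and only if $|M_{p,q}|=1$. If $M_{p,q}=\{m\}$, then $m$ divides every monomial of $T_{p,q}$, so $m$ is the minimum of $T_{p,q}$ in \emph{every} monomial order and hence $\mu_{p,q}=m=\mu_{p,q}'$. Conversely, if $\mu_{p,q}=\mu_{p,q}'=:m$, then $m$ has simultaneously the least $x$-exponent and the least $y$-exponent among all monomials of $T_{p,q}$, so $m$ divides each of them; every other monomial of $T_{p,q}$ then lies in $(X,Y)$, whence $M_{p,q}=\{m\}$. Combining this with (ii)$\iff$(iii) gives: $\basis_0=\basis_0'$ iff $|M_{p,q}|=1$ for all $(p,q)\in H$ iff $\lambda(R/(X,Y))=|H|$. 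By Theorem~\ref{thm:subgp} the last condition says $\lambda(R/(X,Y))=e((x^a,y^b))$, and by Theorem~\ref{thm:matsu} (equivalently, by the argument in the proof of Proposition~\ref{prop:empty}) this is exactly the statement that $R$ is Cohen--Macaulay, closing the loop with (i).

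I expect the step needing the most care to be the identification in the second paragraph: one must fix the variable orderings so that ``lexicographic'' and ``reverse lexicographic'' really do select, in each congruence class, the monomial of $R$ of least $x$-exponent and the one of least $y$-exponent respectively; check that these minima are attained at all (most cleanly via the finiteness of $M_{p,q}$ rather than via a well-ordering property, which reverse lexicographic order need not possess on $k[x,y]$); and verify that a monomial which has both the least $x$-exponent and the least $y$-exponent in its class is forced to divide that whole class. Once those points are settled, the remaining assembly is routine.
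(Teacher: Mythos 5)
Your proof is correct and follows essentially the same route as the paper's: both rest on partitioning the monomials of $R\setminus(X,Y)$ by congruence class in $H$, observing that $\mu_{p,q}=\mu_{p,q}'$ exactly when the class has a unique minimal monomial, and closing the loop via $\lambda(R/(X,Y))=|H|=e((x^a,y^b))$ together with Theorem~\ref{thm:matsu}. Your version is somewhat more self-contained (you reprove the content of Proposition~\ref{prop:empty} via the sets $M_{p,q}$ rather than citing it), but the underlying argument is the same.
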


\begin{proof}$(i)\Rightarrow (ii)$: If $R$ is CM, then $\lambda(R/(x^a,y^b)) = e((x^a,y^b))=|\basis_0|=|\basis_0'|$ by Theorem \ref{thm:subgp}. Elements of $\basis_0$ are outside $(X,Y)$ by construction, so $\basis_0$ is a $k$-basis for $R/(X,Y)$. But the same is true for $\basis_0'$ and there is only one $k$-basis consisting of monomials.

$(ii)\Rightarrow (iii)$: Suppose $\basis_0=\basis_0'$. For each congruence class in $H$, $\basis_0$ and $\basis_0'$ contain exactly one element whose log lies in that class. Since $\mu_{p,q} \in \basis_0'$, it must be that $\mu_{p,q}=\mu_{p,q}'$.

$(iii)\Rightarrow (i)$: Suppose $\mu_{p,q}=\mu_{p,q}'$, so that $\mu_{p,q}$ has the smallest $x$-exponent and the smallest $y$-exponent of any monomial in its congruence class. $B_n \cap \{\beta| \log(\beta)=(p,q)\} = \emptyset$. Since this holds for all $(p,q)\in H$, $B_n= \emptyset$ and $R$ is CM by Proposition \ref{prop:empty}.\end{proof}

\section{Semigroup rings with four generators}
\label{sec:fourgen}

In this section, we will consider semigroup rings of the form $R=k[x^d,x^ey^{\ell},x^fy^m,y^n]$
with $d,n>0$, $e,f,\ell,m \geq 0$ and $(e,\ell) \neq (0,0)$.
Our first main result in this section is Theorem~\ref{thm:fourgen}, which gives a simple criterion to determine
whether $R$ is Cohen-Macaulay. The second main result is Theorem~\ref{thm:basis}, which gives an algorithm
to generate a $k$-basis of $R/(x^d,y^n)$.
As noted in Remark~\ref{rmk:a=b}, one may assume that $d=n$ for most results in this section, whereas
Corollary~\ref{cor:length} is probably best stated without assuming $d=n$.

\begin{notn}\label{notn:abch}
  Given a group $G$ and an element $g \in G$, we write $\ord(g,G)$ to denote the order of $g$ in $G$.
  For elements $(g,h),(g',h') \in \bbz^2$ we let $(g,h) \prec (g',h')$ if and only if
  either $h' > h$ or $h'=h$ but $g' > g$.
  \separate

  Throughout this section, we fix $a_i,b_i \in \bbn$ and $(g_i,h_i) \in d\bbz \oplus n\bbz$, $i=1,2,3$
  as follows. Let $(g_1,h_1) \in d\bbz \oplus n\bbz$ be the smallest element with respect to $\prec$
  such that there are positive integers $a_1,b_1$ with $b_2 \geq b_1$ ($b_2$ to be defined below) and
  \begin{equation} \label{eqn:(d,n)}
    a_1(e,\ell) + b_1(f,m) = (g_1,h_1)
  \end{equation}
  
  Let $b_2$ be the smallest positive integer such that there exist $a_2 \geq 0$ and
  $(g_2,h_2) \in d\bbz \oplus n\bbz$ with either at least one of $g_2,h_2$ being positive or
  $(g_2,h_2)=(0,0)$ such that
  \begin{equation} \label{eqn:(f,m)}
    -a_2(e,\ell) + b_2(f,m) = (g_2,h_2)
  \end{equation}
  We choose $a_2 < \ord((e,\ell),(\bbz/d\bbz) \oplus (\bbz/n\bbz))$.
  
  Let $a_3$ be the smallest positive integer such that there exist $b_3 \geq 0$ and
  $(g_3,h_3) \in d\bbz \oplus n\bbz$ with $g_3,h_3\geq 0$ and $g_3,h_3$ not both 0 such that
  \begin{equation} \label{eqn:(e,l)}
    a_3(e,\ell) - b_3(f,m) = (g_3,h_3)
  \end{equation}
  We choose $b_3 < \ord((f,m),(\bbz/d\bbz) \oplus (\bbz/n\bbz))$.
\end{notn}

\begin{lem}
  We have $a_3 > a_2$ and $b_2 > b_3$.
\end{lem}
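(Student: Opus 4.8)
The plan is to argue inside the finite group $G=(\bbz/d\bbz)\oplus(\bbz/n\bbz)$, setting $N_e=\ord((e,\ell),G)$ and $N_f=\ord((f,m),G)$, and to play the minimality in the definitions of $a_3$ and $b_2$ against the sign conditions imposed on $(g_2,h_2)$ and $(g_3,h_3)$.

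First I would dispose of the easy cases. Since $(e,\ell)\neq(0,0)$, the vector $N_e(e,\ell)$ is a nonzero nonnegative element of $d\bbz\oplus n\bbz$, so $(N_e,0,N_e(e,\ell))$ satisfies \eqref{eqn:(e,l)}; hence $a_3\leq N_e$, and symmetrically $b_2\leq N_f$. If $a_3=N_e$ then $a_3>a_2$ since $a_2<N_e$ by Notation~\ref{notn:abch}, and moreover $\overline{a_3(e,\ell)}=0$ in $G$ forces $\overline{b_3(f,m)}=0$, hence $b_3=0$ (as $b_3<N_f$) and $b_2>b_3$ too; likewise $b_2=N_f$ settles both inequalities. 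So assume $a_3<N_e$ and $b_2<N_f$. Reducing \eqref{eqn:(f,m)} and \eqref{eqn:(e,l)} modulo $(d,n)$ gives $a_2\overline{(e,\ell)}=b_2\overline{(f,m)}$ and $a_3\overline{(e,\ell)}=b_3\overline{(f,m)}$ in $G$; since $0<a_3<N_e$ and $0<b_2<N_f$ these classes are nonzero, whence $b_3\neq0$ and $a_2\neq0$.

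The combinatorial core is the identity obtained by adding \eqref{eqn:(f,m)} and \eqref{eqn:(e,l)}:
\[(a_3-a_2)(e,\ell)+(b_2-b_3)(f,m)=(g_2+g_3,\;h_2+h_3).\]
The right-hand side is never $(0,0)$ --- that would give $(g_3,h_3)=-(g_2,h_2)$, hence $g_2,h_2\leq0$, hence $(g_2,h_2)=(0,0)$ by the sign condition and then $(g_3,h_3)=(0,0)$, against \eqref{eqn:(e,l)} --- and a short case check (using $(g_3,h_3)\geq(0,0)$, $(g_3,h_3)\neq(0,0)$, and the sign condition on $(g_2,h_2)$) shows it always has a \emph{positive} coordinate. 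Hence one cannot have both $a_3\leq a_2$ and $b_2\leq b_3$: then the left side would be $\leq(0,0)$ coordinatewise, since $e,f,\ell,m\geq0$. To get $a_3>a_2$ outright, suppose $a_3\leq a_2$; then $b_2>b_3$, and rewriting the identity as $-(a_2-a_3)(e,\ell)+(b_2-b_3)(f,m)=(g_2+g_3,h_2+h_3)$ exhibits $b_2-b_3\geq1$ as a valid value of $b_2$ in \eqref{eqn:(f,m)} (the new ``$a_2$'' is $a_2-a_3\geq0$, and the right side is nonzero with a positive coordinate). Minimality of $b_2$ forces $b_2\leq b_2-b_3$, i.e.\ $b_3=0$, a contradiction; so $a_3>a_2$.

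Finally, $b_2>b_3$. Suppose $b_2\leq b_3$; then $b_2<b_3$ (equality forces $\overline{a_3(e,\ell)}=\overline{a_2(e,\ell)}$, hence $a_3=a_2$, against the last paragraph). Put $p=a_3-a_2\geq1$, $q=b_3-b_2\geq1$, so $p(e,\ell)-q(f,m)=(g_2+g_3,h_2+h_3)=:V\neq(0,0)$. If $V\geq(0,0)$ coordinatewise, then $p<a_3$ is a valid value of $a_3$ in \eqref{eqn:(e,l)} (with ``$b_3$'' $=q$), forcing $a_2=0$, a contradiction. The remaining possibility --- that $V$ has a negative coordinate --- is the one I expect to be the main obstacle, since \eqref{eqn:(e,l)} demands a \emph{fully} nonnegative vector whereas the analogue for \eqref{eqn:(f,m)} needs only one positive coordinate, so the symmetry with the previous paragraph breaks. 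My plan for it is a descent: a negative coordinate of $V$, say $g_2+g_3<0$, pins down $g_2<0$ (hence $h_2>0$ by the sign condition) and gives the sharper bound $g_3<|g_2|$; then $-V$ has a positive coordinate, so $q=b_3-b_2$ is a valid value of $b_2$ and $b_2\leq b_3-b_2$, i.e.\ $b_3\geq2b_2$. Iterating --- replace $(p,q)$ by $(a_3-ja_2,\,b_3-jb_2)$, so the defect becomes $V_j=(g_3+jg_2,\,h_3+jh_2)$, whose first coordinate stays negative for all $j\geq1$ because $g_3<|g_2|$ --- should keep producing ever smaller valid values $b_3-jb_2$ of $b_2$, contradicting $b_3<\infty$. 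Making this descent airtight, especially the step where $a_3-ja_2$ turns negative and the witness in $G$ must be computed via the residue $a_3-ja_2+N_e\in[0,N_e)$, is where the real work lies; the group-theoretic reductions above are what keep the exceptional cases (such as $(f,m)=(0,0)$) and the bookkeeping under control.
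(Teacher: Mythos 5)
Your reductions and the first half of the argument are sound, and in places cleaner than the paper's own proof: the paper disposes of the subcases $b_3=0$ and $a_2=0$ by order arguments buried inside the minimality conditions, whereas your preprocessing in $\qtgp{d}{n}$ (handling $a_3=\ord((e,\ell),\qtgp{d}{n})$ and $b_2=\ord((f,m),\qtgp{d}{n})$ up front and then concluding $a_2\neq 0$ and $b_3\neq 0$) extracts the same facts more transparently. Your proof of $a_3>a_2$ is complete and essentially the paper's, and the subcase of $b_2>b_3$ where $V=(g_2+g_3,h_2+h_3)$ is coordinatewise nonnegative is correctly dispatched via the minimality of $a_3$.

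The hardest subcase --- $V$ has a negative coordinate, say $g_2+g_3<0$ --- is exactly where you stop, and the gap there is a missing idea rather than bookkeeping. Your descent does yield $b_2\leq b_3-jb_2$ for each $j$ with $a_3-ja_2\geq 0$, but that alone produces no contradiction: it simply terminates at $j=q-1$ (where $q>1$ is minimal with $a_3-qa_2\leq 0$) with the harmless conclusion $b_2\leq b_3-(q-1)b_2$. The contradiction must come from the $j=q$ identity, and your proposed device of replacing $a_3-qa_2$ by the residue $a_3-qa_2+N_e$ does not close it: adding $N_e(e,\ell)$ perturbs the right-hand side by a nonnegative element of $\subgp{d}{n}$ of unknown size, and the resulting candidate value for $a_3$ need not be smaller than $a_3$. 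What the paper does instead is read the $j=q$ identity in the other direction, as
\[
(qb_2-b_3)(f,m) = (qg_2+g_3,\,qh_2+h_3) + (qa_2-a_3)(e,\ell),
\]
and extract the sign of $qb_2-b_3$ from the \emph{second} coordinate, where positivity is guaranteed: $g_2<0$ forces $h_2>0$ by the sign condition in \eqref{eqn:(f,m)}, so $qh_2+h_3>0$, and together with $qa_2-a_3\geq 0$ and $m\geq 0$ this gives $qb_2-b_3>0$, i.e.\ $b_3-(q-1)b_2<b_2$, directly contradicting the bound from step $j=q-1$ of your descent. That one sign extraction on the coordinate you are not tracking is the piece your sketch is missing; once it is supplied, your step-by-step iteration and the paper's direct jump to $j=q-1$ and $j=q$ become the same proof.
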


\begin{proof}
   If $a_3 \leq a_2$, then $\eqref{eqn:(f,m)} + \eqref{eqn:(e,l)}$
  gives
  \[
    (b_2-b_3)(f,m) = (g_2+g_3,h_2+h_3) + (a_2-a_3)(e,\ell)
  \]
  By the definitions of $b_2$ and $a_3$, at least one of $g_2+g_3$ or $h_2+h_3$ is positive, so $b_2-b_3
  >0$. If $b_3=0$, then the definition of $a_3$ gives $\ord((e,\ell),(\bbz/d\bbz) \oplus (\bbz/n\bbz))=
  a_3 \leq a_2$, contradicting the choice of $a_2$, so $b_3>0$. But then $b_2>b_2-b_3>0$ contradicts
  the minimality of $b_2$ in \eqref{eqn:(f,m)}. Therefore $a_3 > a_2$.
  \separate

  If $b_2 \leq b_3$, then $\eqref{eqn:(f,m)} + \eqref{eqn:(e,l)}$ gives
  \[
    (a_3-a_2)(e,\ell) = (g_2+g_3,h_2+h_3) + (b_3-b_2)(f,m)
  \]
  Again at least one of $g_2+g_3$ or $h_2+h_3$ is positive, so $a_3-a_2>0$. If $a_2=0$, then the
  definition of $b_2$ gives $\ord((f,m),(\bbz/d\bbz)\oplus(\bbz/n\bbz)) = b_2 \leq b_3$, contradicting
  the choice of $b_3$, so $a_2>0$. If $g_2+g_3$ and $h_2+h_3$ are both nonnegative,
  then $a_3>a_3-a_2>0$ contradicts the minimality of $a_3$ in \eqref{eqn:(e,l)}. So without loss of
  generality, suppose that $h_2+h_3>0$ and $g_2+g_3 < 0$, so $g_2<0$ and $h_2>0$.
  Let $q \in \bbz$, $q>1$ be such that $a_3-(q-1)a_2>0$ but $a_3-qa_2 \leq 0$. Then
  $(q-1)\eqref{eqn:(f,m)} + \eqref{eqn:(e,l)}$ gives
  \[
    (a_3-(q-1)a_2)(e,\ell) = ((q-1)g_2+g_3,(q-1)h_2+h_3) + (b_3-(q-1)b_2)(f,m)
  \]
  Then $(q-1)g_2+g_3<0$ gives $b_3-(q-1)b_2>0$. Next, $q\eqref{eqn:(f,m)} + \eqref{eqn:(e,l)}$ gives
  \[
    (qb_2-b_3)(f,m) = (qg_2+g_3,qh_2+h_3) + (qa_2-a_3)(e,\ell)
  \]
  Since $qh_2+h_3>0$ and $qa_2-a_3 \geq 0$ we have $qb_2-b_3>0$. But then $qb_2-b_3=
  b_2-((b_3-(q-1)b_2)<b_2$ contradicts the minimality of $b_2$ in \eqref{eqn:(f,m)}.
  Therefore $b_2>b_3$.
\end{proof}

\begin{lem} \label{lem:notcong}
  If $u,v \in \bbz$ are such that $a_3 > u \geq 0$, $b_2 > v \geq 0$ and
  $(u,v) \neq (0,0)$, then $u(e,\ell) - v(f,m) \notin d\bbz \oplus n\bbz$.
\end{lem}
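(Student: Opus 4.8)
The plan is to argue by contradiction: suppose $u(e,\ell) - v(f,m) \in d\bbz \oplus n\bbz$ for some $(u,v) \neq (0,0)$ with $a_3 > u \geq 0$ and $b_2 > v \geq 0$. First I would dispose of the easy edge cases. If $v = 0$, then $u > 0$ and $u(e,\ell) \in d\bbz \oplus n\bbz$, so in particular $u(e,\ell)$ has nonnegative entries and is not both zero (it cannot be $(0,0)$ since $u < a_3 \leq \ord((e,\ell),(\bbz/d\bbz)\oplus(\bbz/n\bbz))$); this makes $(u, 0)$ a candidate pair for \eqref{eqn:(e,l)} with $b_3$ replaced by $0$, contradicting the minimality of $a_3$. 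Symmetrically, if $u = 0$ then $v > 0$ and $-v(f,m) \in d\bbz \oplus n\bbz$, so $v(f,m) \in d\bbz \oplus n\bbz$; then $(0, v)$ is a candidate for \eqref{eqn:(f,m)} (taking $a_2 = 0$, and $v(f,m)$ has nonnegative entries), contradicting the minimality of $b_2$.

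The main case is $u, v > 0$. Write $u(e,\ell) - v(f,m) = (g, h) \in d\bbz \oplus n\bbz$. Here the sign of the entries of $(g,h)$ is not controlled, which is exactly the obstacle; the trick will be to add a suitable nonnegative multiple of \eqref{eqn:(f,m)} to \eqref{eqn:(e,l)}-type relations to clear signs, just as in the preceding lemma. Concretely, combining $(g,h) = u(e,\ell) - v(f,m)$ with \eqref{eqn:(f,m)}, namely $-a_2(e,\ell) + b_2(f,m) = (g_2,h_2)$, I can form
\[
  (u - a_2)(e,\ell) = (g + g_2, h + h_2) + (v - b_2)(f,m),
\]
and since $v < b_2$ we have $v - b_2 < 0$, i.e.\ $(b_2 - v)(f,m)$ appears on the right with positive coefficient. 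If $u - a_2 > 0$ and the entries $g + g_2$, $h + h_2$ are both nonnegative (not both zero), then we have produced a relation of the form \eqref{eqn:(e,l)} with $x$-coefficient $u - a_2 < u < a_3$, contradicting minimality of $a_3$. To force the nonnegativity I would iterate: add copies of \eqref{eqn:(f,m)} until the accumulated $(h_2$-)contribution makes the $h$-coordinate positive, being careful (as in the $b_2 > b_3$ argument above) about the case where one coordinate of $(g_2,h_2)$ is negative while the other is positive, in which case a two-sided squeeze between consecutive multiples of $a_2$ produces instead a relation of type \eqref{eqn:(f,m)} with $x$-coefficient smaller than $b_2$, again a contradiction. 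A symmetric maneuver using \eqref{eqn:(e,l)} handles the mirror configuration.

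The hard part will be the sign bookkeeping in this main case: getting both coordinates of the combined vector simultaneously nonnegative (or detecting that one must stay negative and then extracting a contradiction of the \emph{other} minimality) without circular dependence on the lemma being proved. I expect the cleanest route mirrors the structure of the proof of the previous lemma almost line for line — choose the integer $q$ so that $u - (q-1)a_2 > 0$ but $u - q a_2 \leq 0$, write out $(q-1)\cdot\eqref{eqn:(f,m)}$ and $q\cdot\eqref{eqn:(f,m)}$ added to the relation $(g,h) = u(e,\ell) - v(f,m)$, and read off that one of the two resulting relations violates the minimality of $a_3$ or of $b_2$. Once the $u,v>0$ case is closed this way, the lemma follows.
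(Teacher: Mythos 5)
Your handling of the edge cases $u=0$ and $v=0$ is correct, but the main case $u,v>0$ is where the lemma actually lives, and there you have only sketched an iteration that you do not carry out --- and that is in fact unnecessary. The missing idea is that the sign conditions in the definitions of $a_3$ (via \eqref{eqn:(e,l)}) and $b_2$ (via \eqref{eqn:(f,m)}) in Notation~\ref{notn:abch} are exact complements of one another, so the single relation $u(e,\ell)-v(f,m)=(g,h)$, read forwards or backwards, is already a competitor in one of the two minimizations. Either $g,h\geq 0$ with $(g,h)\neq(0,0)$, in which case $(u,v)$ is a valid candidate for \eqref{eqn:(e,l)} (one checks $u>0$, since $u=0$ would force $-v(f,m)$ to have nonnegative entries and hence $(g,h)=(0,0)$, and $v<b_2\leq\ord((f,m),\qtgp{d}{n})$), so $u<a_3$ contradicts the minimality of $a_3$; or else $-g>0$, or $-h>0$, or $(-g,-h)=(0,0)$, in which case $v(f,m)-u(e,\ell)=(-g,-h)$ is a valid candidate for \eqref{eqn:(f,m)} (with $v>0$ by the symmetric check and $u<a_3\leq\ord((e,\ell),\qtgp{d}{n})$), so $v<b_2$ contradicts the minimality of $b_2$. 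No addition of copies of \eqref{eqn:(f,m)}, no choice of $q$, and no sign bookkeeping is required; the two cases exhaust all possibilities for $(g,h)$ by construction.

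As written, your proposed iteration also has concrete problems beyond being unfinished. If $a_2=0$ there is no $q$ with $u-qa_2\leq 0$, so the loop never starts. When it does terminate, the resulting relation $(qb_2-v)(f,m)=(g+qg_2,h+qh_2)+(qa_2-u)(e,\ell)$ only threatens the minimality of $b_2$ when $qb_2-v<b_2$, which forces $q=1$, and even then you have not verified the required sign condition on $(g+g_2,h+h_2)$ (recall $(g_2,h_2)$ may have a negative coordinate) nor the bound $a_2-u<\ord((e,\ell),\qtgp{d}{n})$. So the main case is a genuine gap, not merely deferred bookkeeping; the fix is to abandon the combination with \eqref{eqn:(f,m)} and use the complementarity of the two defining sign conditions directly.
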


\begin{proof}
  Suppose that $u(e,\ell) =  (g,h) +v(f,m)$ for some $(g,h) \in d\bbz \oplus n\bbz$.
  If $g,h \geq 0$ and $g,h$ are not both 0, then $u>0$, contradicting the minimality of $a_3$.
  Otherwise we have $v(f,m) = (-g,-h) + u(e,\ell)$ with $-g>0$, $-h>0$ or
  $(-g,-h)=(0,0)$, contradicting the minimality of $b_2$. Therefore such $(g,h)$ does not exist.
\end{proof}

\begin{lem} \label{lem:123}
  Suppose that $a,b \in \bbn$ and $(g,h) \in d\bbz \oplus n\bbz$ are such that
  \begin{equation} \label{eqn:notmin}
    a(e,\ell) + b(f,m) = (g,h)
  \end{equation}
  \begin{enumerate}[label=(\roman*),align=left,leftmargin=*,nosep]
    \item \label{item:onaxis} If $a_2=a=0$, then $b_2 \mid b$. If $b_3=b=0$, then $a_3 \mid a$.
    \item \label{item:bgtb2} If $a \geq a_3$ and $b>b_2$, then $g \geq g_2+g_3$ and $h \geq h_2+h_3$. If
    $a>a_3-a_2$ or $(f,m) \neq (0,0)$, then $(g,h) \neq (g_2+g_3,h_2+h_3)$.
    \item \label{item:bleqb2} Suppose that $a \geq a_3$, $b \leq b_2$ and $(a,b) \neq (a_3-a_2,b_2-b_3)$. If
    either $b \geq b_2-b_3$ or $b_2-b_3>b$ and $b_3,b$ are not both 0, then
    $g \geq g_2+g_3$, $h \geq h_2+h_3$ and $(g,h) \neq (g_2+g_3,h_2+h_3)$.
    \item \label{item:inside} If $a \leq a_3$, $b \leq b_2$, $a_2,a$ are not both 0 and $b_3,b$ are not both 0, then
  $(a,b) = (a_3-a_2,b_2-b_3)$ or $(0,0)$, or $a>a_3-a_2$ and $b>b_2-b_3$. In fact, there exists
  $q \in \bbn$ such that $a=q(a_3-a_2)$ and $b=q(b_2-b_3)$.
  \end{enumerate}
  In particular, we have $\eqref{eqn:(d,n)} = \eqref{eqn:(f,m)} + \eqref{eqn:(e,l)}$, and one may use
  either lexicographic or reverse lexicographic ordering in the definition of \eqref{eqn:(d,n)}.
\end{lem}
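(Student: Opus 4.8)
\emph{Proof proposal.} The plan is to compare, throughout, the vector $(g,h)$ of \eqref{eqn:notmin} with
\[
  (g_2+g_3,h_2+h_3)=(a_3-a_2)(e,\ell)+(b_2-b_3)(f,m),
\]
using that $a_3-a_2>0$ and $b_2-b_3>0$, so that (since $e,f,\ell,m\geq0$) this vector has nonnegative entries. I would first record two dichotomies, used to dispatch degenerate cases: $b_2=\ord((f,m),\qtgp{d}{n})$ iff $a_2=0$, and $a_3=\ord((e,\ell),\qtgp{d}{n})$ iff $b_3=0$; each is elementary from the minimality defining $b_2$ (resp.\ $a_3$) together with the constraints $a_2<\ord((e,\ell),\qtgp{d}{n})$, $b_3<\ord((f,m),\qtgp{d}{n})$, and in particular $b_2\leq\ord((f,m),\qtgp{d}{n})$ and $a_3\leq\ord((e,\ell),\qtgp{d}{n})$. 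Part~\ref{item:onaxis} then follows at once: if $a_2=a=0$ then $b_2=\ord((f,m),\qtgp{d}{n})$ while \eqref{eqn:notmin} reads $\ord((f,m),\qtgp{d}{n})\mid b$, so $b_2\mid b$; the second claim is the mirror statement.

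For part~\ref{item:bgtb2} and the sub-case of part~\ref{item:bleqb2} with $b\geq b_2-b_3$, I would subtract directly:
\[
  (g,h)-(g_2+g_3,h_2+h_3)=\bigl(a-(a_3-a_2)\bigr)(e,\ell)+\bigl(b-(b_2-b_3)\bigr)(f,m).
\]
Since $a\geq a_3$, the coefficient of $(e,\ell)$ is $\geq a_2\geq0$, and the coefficient of $(f,m)$ is $\geq1$ in~\ref{item:bgtb2} and $\geq0$ in this sub-case of~\ref{item:bleqb2}; nonnegativity of the entries of $(e,\ell)$ and $(f,m)$ then gives $g\geq g_2+g_3$ and $h\geq h_2+h_3$. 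Being a nonnegative combination of $(e,\ell)\neq(0,0)$ and $(f,m)$, the right side vanishes only if $a=a_3-a_2$ and, when $(f,m)\neq(0,0)$, also $b=b_2-b_3$; so it is nonzero because $(a,b)\neq(a_3-a_2,b_2-b_3)$ in~\ref{item:bleqb2}, and because of the stated extra hypothesis in~\ref{item:bgtb2}. The degenerate case $(f,m)=(0,0)$---in which the dichotomies force $a_2=0$, $b_2=1$, $b_3=0$---is then handled by inspection.

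The substantive work is the remaining sub-case of part~\ref{item:bleqb2} (where $b<b_2-b_3$) and part~\ref{item:inside}. For~\ref{item:bleqb2}, using $a\geq a_3$ I would rewrite $a(e,\ell)=(a-a_3)(e,\ell)+(g_3,h_3)+b_3(f,m)$ in \eqref{eqn:notmin} and subtract $(g_2+g_3,h_2+h_3)$; after cancelling the lattice vectors $(g_2,h_2),(g_3,h_3)$ the difference becomes $u(e,\ell)-w(f,m)\in d\bbz\oplus n\bbz$, with $u=a-(a_3-a_2)\geq a_2\geq0$ and $w=(b_2-b_3)-b$ satisfying $1\leq w<b_2$, the strict upper bound being exactly where the hypothesis ``$b_3,b$ not both $0$'' is used. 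If $u(e,\ell)-w(f,m)$ had a negative entry, or were $(0,0)$, then $w$ would be a smaller valid value than $b_2$ in \eqref{eqn:(f,m)} (with $(e,\ell)$-coefficient $u\geq0$, and $(0,0)$ being permitted on the right), contradicting the minimality of $b_2$; hence it is entrywise nonnegative and nonzero, as asserted. For part~\ref{item:inside} I would induct on $a$, with $(a,b)=(0,0)$ giving $q=0$. If $(a,b)\neq(0,0)$: from $-a_2(e,\ell)+b_2(f,m)\in d\bbz\oplus n\bbz$ one gets $(a+a_2)(e,\ell)-(b_2-b)(f,m)\in d\bbz\oplus n\bbz$ with $a+a_2\geq1$; if $a+a_2<a_3$, Lemma~\ref{lem:notcong} forces $b=0$, hence $b_3>0$, hence $a_3<\ord((e,\ell),\qtgp{d}{n})$, and then $a(e,\ell)\in d\bbz\oplus n\bbz$ with $a\leq a_3$ forces $a=0$, a contradiction; so $a+a_2\geq a_3$, i.e.\ $a\geq a_3-a_2>0$, and the mirror computation using $a_3(e,\ell)-b_3(f,m)\in d\bbz\oplus n\bbz$ forces $b\geq b_2-b_3>0$. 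Then $(a',b')=(a-(a_3-a_2),\,b-(b_2-b_3))$ lies in $[0,a_2]\times[0,b_3]$ and still solves the congruence; it is either $(0,0)$, giving $(a,b)=(a_3-a_2,b_2-b_3)$, or has both coordinates positive (if $a'=0$, differencing the two congruences gives $b'(f,m)\in d\bbz\oplus n\bbz$ with $b'\leq b_3<\ord((f,m),\qtgp{d}{n})$, so $b'=0$), and in the latter case the axis conditions hold for $(a',b')$ and the induction hypothesis---applicable as $a'<a$---gives $(a',b')=q'(a_3-a_2,b_2-b_3)$, whence $(a,b)=(q'+1)(a_3-a_2,b_2-b_3)$.

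Finally, $(a_3-a_2,b_2-b_3)$ has positive coordinates with $b_2-b_3\leq b_2$, hence is a legal pair in \eqref{eqn:(d,n)} realising the vector $(g_2+g_3,h_2+h_3)$; conversely, any legal pair in \eqref{eqn:(d,n)} has $a_1,b_1\geq1$ and $b_1\leq b_2$, so it falls under part~\ref{item:bleqb2} (when $a_1\geq a_3$; the sub-condition holds since $b_1\geq1$) or part~\ref{item:inside} (when $a_1<a_3$, where it equals $q(a_3-a_2,b_2-b_3)$ with $q\geq1$), and in either case realises a vector dominating $(g_2+g_3,h_2+h_3)$ entrywise. Thus $(g_2+g_3,h_2+h_3)$ is the entrywise minimum of the vectors realised by \eqref{eqn:(d,n)}, hence the $\prec$-least one under either lexicographic convention, which gives $\eqref{eqn:(d,n)}=\eqref{eqn:(f,m)}+\eqref{eqn:(e,l)}$ and the claimed ordering independence. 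I expect the main obstacle to be the case management in part~\ref{item:inside} and in the hard sub-case of part~\ref{item:bleqb2}: one must track carefully which hypothesis---an axis condition, the inequalities $a_3>a_2$ and $b_2>b_3$, or one of the two order dichotomies---rules out each degenerate configuration.
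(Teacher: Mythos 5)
Your proposal is correct and follows essentially the same route as the paper: part \ref{item:onaxis} via the order observations, parts \ref{item:bgtb2}--\ref{item:bleqb2} by subtracting $(a_3-a_2)(e,\ell)+(b_2-b_3)(f,m)=(g_2+g_3,h_2+h_3)$ and, in the hard sub-case, contradicting the minimality of $b_2$, and part \ref{item:inside} via Lemma~\ref{lem:notcong} together with descent by $(a_3-a_2,b_2-b_3)$. The only differences are cosmetic --- you organize \ref{item:inside} as an induction on $a$ rather than choosing $q$ directly, and you spell out the ``in particular'' clause that the paper leaves implicit.
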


\begin{proof}
  \ref{item:onaxis}: If $a_2=0$, then $b_2 = \ord((f,m),\qtgp{d}{n})$, so if $a=0$, then $b_2 \mid b$. Similarly, if
  $b_3=b=0$, then $a_3 = \ord((e,\ell),\qtgp{d}{n}) \mid a$.
  \separate
  
  \ref{item:bgtb2} and \ref{item:bleqb2}: Now since $a_3>a_2$ and $b_2>b_3$, $\eqref{eqn:(f,m)} + \eqref{eqn:(e,l)}$ gives
  \begin{equation} \label{eqn:mincand}
    (a_3-a_2)(e,\ell) + (b_2-b_3)(f,m) = (g_2+g_3,h_2+h_3)
  \end{equation}
  Suppose that $a\geq a_3 \geq a_3-a_2$ and $b \geq b_2-b_3$. Then $\eqref{eqn:notmin}-
  \eqref{eqn:mincand}$ gives $g \geq g_2+g_3$ and $h \geq h_2+h_3$. Suppose furthermore that
  $(a,b) \neq (a_3-a_2,b_2-b_3)$. If $a>a_3-a_2$, then $g>g_2+g_3$ or $h>h_2+h_3$.
  If $a=a_3-a_2$ so that $b>b_2-b_3$, then $b_2 \leq \ord((f,m),\qtgp{d}{n}) \mid b-(b_2-b_3)$, so
  $b>b_2$. So suppose that $a \geq a_3$, $b < b_2-b_3$
  and $b_3,b$ are not both 0. If $g<g_2+g_3$ or $h<h_2+h_3$ or $(g,h)=(g_2+g_3,h_2+h_3)$,
  then $\eqref{eqn:mincand}-\eqref{eqn:notmin}$ gives
  \[
    (b_2-b_3-b)(f,m) = (g_2+g_3-g,h_2+h_3-h) + (a-a_3+a_2)(e,\ell),
  \]
  contradicting the minimality of $b_2$. So $g\geq g_2+g_3$, $h\geq h_2+h_3$ and
  $(g,h) \neq (g_2+g_3,h_2+h_3)$.
  \separate

  \ref{item:inside}: Now suppose that $a_3\geq a$, $b_2\geq b$, $a_2,a$ are not both 0, $b_3,b$ are not both 0,
  $(a,b) \neq (a_3-a_2,b_2-b_3)$ and $(a,b) \neq (0,0)$. Then by Lemma~\ref{lem:notcong}, we cannot
  have $a\geq a_3-a_2$ and $b \leq b_2-b_3$, or $a \leq a_3-a_2$ and $b \geq b_2-b_3$.
  So suppose that $a<a_3-a_2$ and $b<b_2-b_3$. If $a \neq 0$, then $\eqref{eqn:(e,l)}-
  \eqref{eqn:notmin}$ gives
  \[
    (a_3-a)(e,\ell)-(b+b_3)(f,m)=(g_3-g,h_3-h),
  \]
  contradicting Lemma~\ref{lem:notcong}. Similarly, $b \neq 0$ and $\eqref{eqn:notmin}-
  \eqref{eqn:(f,m)}$ gives a contradiction. Therefore $a>a_3-a_2$ and $b>b_3-b_2$. In such a case,
  let $q \in \bbn$ be such that $a-(q-1)(a_3-a_2),b-(q-1)(b_2-b_3)>0$ but one of $a-q(a_3-a_2)$ or
  $b-q(b_2-b_3)$ is nonpositive. By what we just proved, we have $(a-(q-1)(a_3-a_2),b-(q-1)(b_2-b_3))
  =(a_3-a_2,b_2-b_3)$, so $a=q(a_3-a_2)$ and $b=q(b_2-b_3)$.
\end{proof}

\begin{notn}\label{notn:b0}
  Let $a,b$ denote natural numbers. We let
  \begin{align*}
    B_0 &= \{ (a,b) \mid a < a_1 \text{ and } b < b_2 \}
              \cup \{ (a,b) \mid a < a_3 \text{ and } b < b_1 \}\\
        &= \{ (a,b) \mid a < a_3 \text{ and } b < b_2 \}
              \setminus \{ (a,b) \mid a \geq a_1 = a_3-a_2 \text{ and } b \geq b_1 = b_2-b_3 \}
  \end{align*}
  Let us write $\vct{a}{b} = a(e,\ell)  + b(f,m)$. We write $\vct{a}{b} \equiv \vct{a'}{b'}$ to mean
  $\vct{a}{b} - \vct{a'}{b'} \in \subgp{d}{n}$. We let $H$ be the subgroup of $\qtgp{d}{n}$
  generated by $(e,\ell) = \vct{1}{0}$ and $(f,m) = \vct{0}{1}$.
\end{notn}

\begin{rmk}
  We may visualize the set $B_0$ as follows. For $(a,b) \in \bbn \times \bbn$, the first coordinate
  $a$ increases to the right and the second coordinate $b$ increases downwards.
  \begin{center}
    \begin{tabular}{r@{} c c @{}l}
      & \multicolumn{2}{c}{$a_3$} & \\ \cline{2-3}
      \multirow{3}{*}{\rule{18pt}{0pt} $b_2$\,}
      & \multicolumn{1}{|@{}l}{\raisebox{3pt}{\,$(0,0)$}}
      & \multicolumn{1}{c|}{\parbox[c][24pt][c]{38pt}{\rule{0pt}{18pt} \hspace{8pt} $a_2$}}
      & $\, b_1$\\ \cline{3-3}
      & \multicolumn{1}{|r@{}|}{\parbox[c][24pt][c]{15pt}{\hfill $b_3\,$}}
      & \multicolumn{1}{@{}l}{\raisebox{4pt}{\,$^{\bullet}(a_1,b_1)$}}\\
      \cline{2-2}
      & \hspace{3pt} $a_1$\\
      & \multicolumn{2}{c}{\rule{0pt}{15pt} General case}
    \end{tabular}
    \qquad
    \begin{tabular}{r@{} r@{} c @{}l}
      & \multicolumn{2}{c}{$a_3=a_1$} & \\ \cline{2-3}
      \multirow{3}{*}{\rule{18pt}{0pt} $b_2$\,}
      & \multicolumn{1}{|r@{}}{\parbox[c][24pt][c]{61pt}{\hfill $b_1$}}
      & \multicolumn{1}{c|}{}\\ \cline{3-3}
      & \multicolumn{1}{|r@{}}{\parbox[c][24pt][c]{61pt}{\hfill $b_3$}}
      & \multicolumn{1}{c|}{}
      & \multicolumn{1}{@{}l}{\raisebox{4pt}{\,$^{\bullet}(a_1,b_1)$}}\\
      \cline{2-3}
      & \hspace{3pt} \phantom{$a_1$}\\
      \multicolumn{4}{c}{\rule{0pt}{15pt} Case $a_2=0$, $b_3 \neq 0$ \rule{3pt}{0pt}}
    \end{tabular}\\[12pt]
    \begin{tabular}{r@{} c @{}l @{}l}
      & \multicolumn{2}{c}{$a_3$}\\ \cline{2-3}
      \multirow{3}{*}{\raisebox{9pt}{$b_2=b_1$}\,}
      & \multicolumn{1}{|c}{\parbox[c][34pt][b]{18pt}{\rule{2pt}{0pt} $a_1$}}
      & \multicolumn{1}{c|}{\parbox[c][34pt][b]{38pt}{\rule{10pt}{0pt} $a_2$}}\\
      & \multicolumn{1}{|c|}{}
      & \multicolumn{1}{|c|}{}
      & \phantom{$\, b_1$}\\
      \cline{2-3}
      && \rule{0pt}{11pt}$^{\bullet}(a_1,b_1)$\\
      \multicolumn{4}{c}{\rule{0pt}{15pt} \rule{18pt}{0pt} Case $a_2 \neq 0$, $b_3=0$}
    \end{tabular}
    \qquad
    \begin{tabular}{r@{} c @{}l}
      & $a_3=a_1$ & \\ \cline{2-2}
      \multirow{3}{*}{$b_2=b_1$\,}
      & \multicolumn{1}{|c|}{\parbox[c][24pt][c]{64pt}{\rule{60pt}{0pt}}}\\
      & \multicolumn{1}{|c|}{\parbox[c][24pt][c]{64pt}{}}\\
      \cline{2-2}
      && \multicolumn{1}{@{}l}{\,$^{\bullet}(a_1,b_1)$}\\
      \multicolumn{3}{c}{\rule{0pt}{15pt} Case $a_2=b_3=0$ \rule{3pt}{0pt}}
    \end{tabular}
  \end{center}
\end{rmk}

\begin{lem} \label{lem:sizecand}
  We have $|B_0| = |H|$.
\end{lem}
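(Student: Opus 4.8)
The plan is to exhibit an explicit bijection $\phi\colon B_0 \to H$, $(a,b)\mapsto \vct{a}{b} = a(e,\ell)+b(f,m) \bmod \subgp{d}{n}$. Its image lies in $H$ by the definition of $H$, and both sets are finite, so it is enough to show $\phi$ is injective and surjective. I would work with the kernel $L$ of the surjection $\bbz^2 \to H$, $(a,b)\mapsto a(e,\ell)+b(f,m)$, so that $\bbz^2/L \cong H$: Equations \eqref{eqn:(f,m)}, \eqref{eqn:(e,l)}, and \eqref{eqn:(d,n)} say precisely that $(-a_2,b_2)$, $(a_3,-b_3)$, and — using the last clause of Lemma~\ref{lem:123} — $(a_1,b_1)=(a_3-a_2,b_2-b_3)$ all lie in $L$; here $a_1,b_1\geq 1$ since $a_3>a_2$ and $b_2>b_3$. (The common value of $|B_0|$ and $|H|$ will turn out to be $a_3b_2-a_2b_3$, which one can also read off from the box picture.)

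For surjectivity, given $c\in H$, I would first write $c=\vct{a}{b}$ with $(a,b)\in\bbn^2$ (possible since $(e,\ell)$ and $(f,m)$ have finite order in $\qtgp{d}{n}$), and then repeatedly apply whichever of these moves is applicable: if $a\geq a_3$, replace $(a,b)$ by $(a-a_3,b+b_3)$; else if $b\geq b_2$, replace $(a,b)$ by $(a+a_2,b-b_2)$; else if $a\geq a_1$ and $b\geq b_1$, replace $(a,b)$ by $(a-a_1,b-b_1)$. Each move keeps $(a,b)\in\bbn^2$, preserves $\vct{a}{b}\bmod\subgp{d}{n}$ (the shift vectors lie in $L$), and strictly decreases the nonnegative integer $b_2a+a_3b$ (the changes are $a_3(b_3-b_2)$, $b_2(a_2-a_3)$, $-(b_2a_1+a_3b_1)$, all negative). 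So the process terminates, necessarily at a pair with $a<a_3$, $b<b_2$, and not both $a\geq a_1$ and $b\geq b_1$, i.e. at an element of $B_0$ lying over $c$.

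For injectivity, suppose $(a,b),(a',b')\in B_0$ have $\vct{a}{b}\equiv\vct{a'}{b'} \bmod \subgp{d}{n}$; set $(u,v)=(a-a',b-b')$, so $u(e,\ell)+v(f,m)\in\subgp{d}{n}$ with $|u|<a_3$, $|v|<b_2$, and I must conclude $(u,v)=(0,0)$. If $u,v$ have strictly opposite signs, Lemma~\ref{lem:notcong} applied to $(|u|,|v|)$ is contradicted, so after possibly swapping the two pairs I may take $u,v\geq 0$; suppose for contradiction $(u,v)\neq(0,0)$. If $v=0$, then $u(e,\ell)\in\subgp{d}{n}$ has nonnegative, not-both-zero entries (as $(e,\ell)\neq(0,0)$), so the minimality in the definition of $a_3$ (with $0$ in place of $b_3$) forces $u\geq a_3$ — impossible; if $u=0$ and $(f,m)\neq(0,0)$, the mirror argument with $b_2$ applies, and if $(f,m)=(0,0)$ then $b_2=1$ forces $v=0$. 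Hence $u,v>0$, and Lemma~\ref{lem:123}\ref{item:inside} applied with $(a,b):=(u,v)$ yields $(u,v)=(a_1,b_1)$ or $(u,v)=q(a_1,b_1)$ for some integer $q\geq 2$; in both cases $a=a'+u\geq a_1$ and $b=b'+v\geq b_1$ while $a<a_3$, $b<b_2$ (as $(a,b)\in B_0$), so $(a,b)$ lies in the rectangle excluded from $B_0$ in Notation~\ref{notn:b0}, a contradiction. Therefore $\phi$ is injective and $|B_0|=|H|$.

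The main obstacle is injectivity. Surjectivity is a clean Euclidean-type descent once the weight $b_2a+a_3b$ is spotted. The care in injectivity goes into the degenerate cases — a coordinate of $(u,v)$ being zero, or $(f,m)=(0,0)$ — which must be handled through the minimality conditions of Notation~\ref{notn:abch}, and into correctly citing Lemma~\ref{lem:123}\ref{item:inside} (with its final ``in fact'' clause) to see that no nonzero $(u,v)$ inside the $a_3\times b_2$ box is compatible with $(a,b)\in B_0$.
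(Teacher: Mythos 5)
Your proposal is correct and follows essentially the same route as the paper: both establish that $(a,b)\mapsto\vct{a}{b}$ is a bijection from $B_0$ onto $H$, proving injectivity via Lemma~\ref{lem:notcong} (opposite-sign differences) together with Lemma~\ref{lem:123} (same-sign differences), and surjectivity by reducing an arbitrary $\vct{a'}{b'}$ into $B_0$ using the three relations \eqref{eqn:(d,n)}, \eqref{eqn:(f,m)}, \eqref{eqn:(e,l)}. The only real difference is cosmetic: you package the reduction as a single descent governed by the strictly decreasing weight $b_2a+a_3b$, where the paper instead reduces $b'$ below $b_2$ first and then runs a short case analysis to bring $a'$ below $a_3$; your handling of the degenerate cases ($v=0$, $u=0$, $(f,m)=(0,0)$) in the injectivity step is slightly more explicit than needed, since Lemma~\ref{lem:notcong} already covers a vanishing coordinate, but it is not wrong.
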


\begin{proof}
  $|B_0| \geq |H|$: We will show that for every $\vct{a'}{b'}$ with $a',b' \in \bbn$, there exists
  $(a,b) \in B_0$ such that $\vct{a}{b} \equiv \vct{a'}{b'}$. First, we show that there exist
  $a'',b'' \in \bbn$ such that $\vct{a'}{b'} \equiv \vct{a''}{b''}$ and $b'' < b_2$. Let $q,r \in \bbn$ be
  such that $b' = qb_2 + r$ as in the Euclidean algorithm. Then from \eqref{eqn:(f,m)} we have
  $\vct{0}{b_2} \equiv \vct{a_2}{0}$, so $\vct{a'}{b'} \equiv \vct{a'+qa_2}{r}$ with $b_2>r \geq 0$.
  \separate

  So assume that $b'<b_2$. We will now reduce to the case that $a'<a_3$. It suffices to
  show that if $a' \geq a_3$, then there exist $a'',b'' \in \bbn$ such that $\vct{a'}{b'} \equiv
  \vct{a''}{b''}$, $a''<a'$ and $b''<b_2$.
  \separate

  \textit{Case 1:} $b' \geq b_1$. From \eqref{eqn:(d,n)} we have $\vct{a_1}{b_1} \equiv \vct{0}{0}$,
  so $\vct{a'}{b'} \equiv \vct{a'-a_1}{b'-b_1}$ with $a'>a'-a_1 \geq a'-a_3 \geq 0$ and $b_2>b'>b'-b_1
  \geq 0$.
  \separate

  \textit{Case 2:} $b'<b_1$ and $b'+b_3<b_2$. From \eqref{eqn:(e,l)} we have $\vct{a_3}{0} \equiv
  \vct{0}{b_3}$,  so $\vct{a'}{b'} \equiv \vct{a'-a_3}{b'+b_3}$.
  \separate

  \textit{Case 3:} $b'<b_1$ and $b'+b_3 \geq b_2$. From \eqref{eqn:(e,l)} and \eqref{eqn:(f,m)}
  we have $\vct{a'}{b'} \equiv \vct{a'-a_3+a_2}{b'+b_3-b_2}$ with
  $a'>a'-a_1=a'-a_3+a_2$ and $b_2>b'>b'-b_1=b'+b_3-b_2 \geq 0$.
  \separate

  So suppose that $a'<a_3$ and $b'<b_2$ but $a' \geq a_1$ and $b' \geq b_1$. Let $q \in \bbn$
  be such that $a'-qa_1, b'-qb_1 \geq 0$ but $a'-(q+1)a_1$ or $b'-(q+1)b_1$ is negative, so that
  $a'-qa_1<a_1$ or $b'-qb_1<b_1$. Then $\vct{a'}{b'} \equiv \vct{a'-qa_1}{b'-qb_1}$ and
  $(a'-qa_1,b'-qb_1) \in B_0$.
  \separate

  $|B_0| \leq |H|$: Suppose that $(a,b),(a',b') \in B_0$ and $(a,b) \neq (a',b')$. If $a'-a \geq 0$
  and $b'-b\leq 0$ then $\vct{a}{b} \not\equiv \vct{a'}{b'}$ by Lemma~\ref{lem:notcong}. If $a'-a,b'-b
  \geq 0$ then $\vct{a}{b} \not\equiv \vct{a'}{b'}$ by Lemma~\ref{lem:123}. Therefore $|B_0| \leq |H|$.
\end{proof}

\begin{notn}\label{notn:vectorx}
  Given $a,b \in \bbn$, we define the monomial
  \[
    \vec{x}^{\vct{a}{b}} = (x^ey^{\ell})^a(x^fy^m)^b = x^{ae+bf}y^{a\ell+bm}
  \]
  We also define the set of monomials $\basis_0 = \{ \vec{x}^{\vct{a}{b}} \mid (a,b) \in B_0 \}$.
\end{notn}

\begin{rmk} \label{rmk:quadrant}
  Let $a,b,a',b' \in \bbn$.
  \begin{enumerate}[label=(\roman*),align=left,leftmargin=*,nosep]
    \item If $a' \geq a$, $b' \geq b$ and $\vct{a'}{b'} - \vct{a}{b} = (g,h)$, then $g,h \geq 0$.
    \item Equations \eqref{eqn:(d,n)} and \eqref{eqn:(e,l)} show that $\compvec{x}{a_1}{b_1}
    \in (x^d,y^n)$ and $\compvec{x}{a_3}{0} \in \compvec{x}{0}{b_3} (x^d,y^n)$. Hence
    $\compvec{x}{a'}{b'} \in (x^d,y^n)$ if $a' \geq a_3$, or $a' \geq a_1$ and $b' \geq b_1$.
    \item If $a' \leq a$, $b' \leq b$ and $(a,b) \in B_0$, then $(a',b') \in B_0$.
  \end{enumerate}
\end{rmk}

\begin{lem} \label{lem:indep}
  Given a set $S \subseteq \bbn \times \bbn$, the set of monomials $\{ \compvec{x}{a}{b} \mid (a,b)
  \in S \}$ is linearly independent in $R/(x^d,y^n)$ over $k$ if and only if:
  \begin{enumerate}[label=(\roman*),align=left,leftmargin=*,nosep]
    \item if $(a,b) \in S$, $a',b' \in \bbn$ and $\vct{a}{b} - \vct{a'}{b'} = (g,h) \in \subgp{d}{n}$,
    then $g<0$ or $h<0$ or $(g,h)=(0,0)$, and \label{item:not0}
    \item if $(a,b),(a',b') \in S$ and $(a,b) \neq (a',b')$, then $\vct{a}{b} \neq \vct{a'}{b'}$.
  \end{enumerate}
\end{lem}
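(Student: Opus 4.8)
The plan is to reduce the statement to the standard vector-space description of $R/(x^d,y^n)$. Since $(x^d,y^n)$ is a monomial ideal of the monomial ring $R$, it has a $k$-basis of monomials, and so $R/(x^d,y^n)$ has a $k$-basis consisting of the images of those monomials of $R$ that do not lie in $(x^d,y^n)$. Hence a family of monomials of $R$ is $k$-linearly independent in $R/(x^d,y^n)$ if and only if its members are pairwise distinct and none of them lies in $(x^d,y^n)$. So it will suffice to show that condition (ii) is exactly the assertion that the monomials $\compvec{x}{a}{b}$, $(a,b)\in S$, are pairwise distinct, and that condition (i), quantified over all $(a,b)\in S$, is exactly the assertion that none of these monomials lies in $(x^d,y^n)$.

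The first of these is immediate: $\compvec{x}{a}{b}=x^{ae+bf}y^{a\ell+bm}$ has exponent vector $\vct{a}{b}$, so $\compvec{x}{a}{b}=\compvec{x}{a'}{b'}$ as monomials exactly when $\vct{a}{b}=\vct{a'}{b'}$ in $\bbz^2$; thus distinct elements of $S$ give distinct monomials precisely when $\vct{a}{b}\neq\vct{a'}{b'}$, which is (ii). For the second, I would use that $R$ is the semigroup ring of the submonoid $M\subseteq\bbn^2$ generated by $(d,0),(e,\ell),(f,m),(0,n)$, together with the elementary fact that a monomial $\mu$ of $R$ lies in $(x^d,y^n)$ if and only if $\mu/x^d\in R$ or $\mu/y^n\in R$, i.e.\ if and only if $\log(\mu)-(d,0)\in M$ or $\log(\mu)-(0,n)\in M$. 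Applying this with $\mu=\compvec{x}{a}{b}$, writing the membership in $M$ in terms of the four generators, and absorbing the $(d,0)$- and $(0,n)$-components, one arrives at: $\compvec{x}{a}{b}\in(x^d,y^n)$ if and only if there exist $a',b'\in\bbn$ with $\vct{a}{b}-\vct{a'}{b'}=(g,h)\in\subgp{d}{n}$, $g\geq 0$, $h\geq 0$ and $(g,h)\neq(0,0)$. Negating this statement, and quantifying over $(a,b)\in S$, yields exactly condition (i); combining the two reductions then proves the lemma.

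The only real obstacle is the bookkeeping in the previous paragraph: verifying that ``$\compvec{x}{a}{b}$ is divisible in $R$ by $x^d$ or by $y^n$'' unwinds to precisely the stated nonnegative-lattice-difference condition, and that its negation is literally condition (i). This is routine once one notes that, since $g\in d\bbz$ and $g\geq 0$ force $g/d\in\bbn$ (and likewise $h/n\in\bbn$), a difference lying in $\subgp{d}{n}$ with nonnegative, not-both-zero coordinates can genuinely be pulled out as a product of powers of $x^d$ and $y^n$ times a monomial of $R$, and conversely any $x^d$ or $y^n$ extracted from some $\compvec{x}{a'}{b'}$ contributes exactly such a difference.
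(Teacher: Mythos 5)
Your proposal is correct and follows the same route as the paper: the paper's entire proof is the single observation that every monomial of $(x^d,y^n)$ is a scalar multiple of $x^g\compvec{x}{a'}{b'}y^h$ with $(g,h)\in\subgp{d}{n}$, $g,h\geq 0$, $(g,h)\neq(0,0)$, which is exactly the divisibility criterion you unwind. You simply make explicit the standard facts (monomial $k$-basis of the quotient, distinctness of exponent vectors) that the paper leaves implicit.
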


\begin{proof}
  Every monomial in $(x^d,y^n)$ can be written as a scalar multiple of $x^g\compvec{x}{a}{b}y^h$
  for some $a,b \in \bbn$ and $(g,h) \in \subgp{d}{n}$ with $g,h \geq 0$ and $(g,h) \neq (0,0)$.
\end{proof}

\begin{lem} \label{lem:basis0}
  The set $\basis_0$ is linearly independent in $R/(x^d,y^n)$ over $k$.
\end{lem}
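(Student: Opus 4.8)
The plan is to apply Lemma~\ref{lem:indep} to $S = B_0$, so the statement reduces to checking the two conditions listed there, call them (i) and (ii).

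Condition (ii)---that distinct pairs in $B_0$ have distinct (in fact incongruent) images---is exactly what the second half of the proof of Lemma~\ref{lem:sizecand} (the inequality $|B_0|\leq|H|$) shows: given distinct $(a,b),(a',b')\in B_0$, after swapping the pairs we may assume $a'\geq a$, and then either $b'\leq b$, in which case Lemma~\ref{lem:notcong} forbids $\vct{a}{b}\equiv\vct{a'}{b'}$, or $b'\geq b$, in which case Lemma~\ref{lem:123} does. So I will just cite that argument.

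The content is condition (i). Fix $(a,b)\in B_0$. By Notation~\ref{notn:b0}, $a<a_3$, $b<b_2$, and moreover $a<a_1$ or $b<b_1$. Suppose, for contradiction, that there are $a',b'\in\bbn$ and $(g,h)\in\subgp{d}{n}$ with $g,h\geq0$, $(g,h)\neq(0,0)$, and $\vct{a}{b}-\vct{a'}{b'}=(g,h)$. Writing $u=a-a'$ and $v=b-b'$ in $\bbz$ this says $u(e,\ell)+v(f,m)=(g,h)$, and I will argue by the signs of $u$ and $v$. If $u<0$ and $v<0$, or if one of $u,v$ is negative and the other is $0$, then, since $e,f,\ell,m\geq0$, the left-hand side has a coordinate which is $\leq0$ and (because $(g,h)\neq(0,0)$) actually $<0$, contradicting $g,h\geq0$. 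If $u>0$ and $v<0$, then $u(e,\ell)-(-v)(f,m)=(g,h)$ presents $u$ as a positive value in the definition of $a_3$ (with $-v\geq0$ playing the role of $b_3$), so $a_3\leq u\leq a<a_3$, impossible; the symmetric case $u<0$, $v>0$ presents $v$ as a value in the definition of $b_2$ and gives $b_2\leq v\leq b<b_2$. The remaining case is $u,v\geq0$, where $0\leq u\leq a<a_3$ and $0\leq v\leq b<b_2$. Here I will apply Lemma~\ref{lem:123}(iv), after first disposing of its two degenerate hypotheses: if $u=a_2=0$ then $(g,h)=v(f,m)$ with $v>0$, and Lemma~\ref{lem:123}(i) gives $b_2\mid v$, contradicting $0<v<b_2$, and the subcase $v=b_3=0$ is dealt with symmetrically via $a_3\mid u$. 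With those out of the way, Lemma~\ref{lem:123}(iv) yields $(u,v)=q(a_1,b_1)$ for some $q\geq1$ (the value $q=0$ is excluded since $(g,h)\neq(0,0)$), so $u\geq a_1$ and $v\geq b_1$, contradicting $a<a_1$ or $b<b_1$. This verifies (i) and finishes the proof.

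I expect the only step with any friction to be this last case $u,v\geq0$: one must confirm that Lemma~\ref{lem:123}(iv) actually applies, which means splitting off the two degenerate possibilities $u=a_2=0$ and $v=b_3=0$ and treating each separately with Lemma~\ref{lem:123}(i). The three remaining sign configurations are immediate from the minimality in the definitions of $a_3$ and $b_2$ and from the nonnegativity of $e,f,\ell,m$.
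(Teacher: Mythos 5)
Your proof is correct and takes essentially the same route as the paper: reduce to Lemma~\ref{lem:indep}, cite the incongruence argument from the second half of Lemma~\ref{lem:sizecand} for condition (ii), and verify condition (i) by a sign analysis on $(a-a',b-b')$ using the minimality of $a_3$ and $b_2$. The only cosmetic difference is in the quadrant $a'\leq a$, $b'\leq b$, where the paper observes $(a',b')\in B_0$ (Remark~\ref{rmk:quadrant}) and reuses the incongruence already established in Lemma~\ref{lem:sizecand}, while you unpack the same underlying Lemma~\ref{lem:123} argument directly; both are fine.
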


\begin{proof}
  By Lemma~\ref{lem:sizecand}, we only need to verify \ref{item:not0} in Lemma~\ref{lem:indep}
  for $(a,b) \in B_0$ and $(a',b') \notin B_0$. By Remark~\ref{rmk:quadrant}, we may assume that
  $a'<a$ or $b'<b$. If $a'<a$, then by Remark~\ref{rmk:quadrant} we have $b'>b$, so
  $\vct{a-a'}{0} = \vct{0}{b'-b} + (g,h)$. By the minimality of $a_3$ we have $g<0$, $h<0$ or
  $(g,h)=(0,0)$. Similarly, if $b'<b$, then $a'>a$ and $\vct{0}{b-b'} = \vct{a'-a}{0} + (g,h)$
  and the result follows from the minimality of $b_2$.
\end{proof}

\begin{thm} \label{thm:fourgen}
  For the ring $R = k[x^d,x^ey^{\ell},x^fy^m,y^n]$, we have:
  \begin{enumerate}[label=(\roman*),align=left,leftmargin=*,nosep]
    \item \label{item:cramer} $\ds |\basis_0| = |H| =
    \left| \begin{matrix}
      a_3 & -b_3\\
      -a_2 & b_2
    \end{matrix} \right| =
    \left| \begin{matrix}
      a_3 & -b_3\\
      a_1 & b_1
    \end{matrix} \right| =
    \left| \begin{matrix}
      a_1 & b_1\\
      -a_2 & b_2
    \end{matrix} \right|$
    \item \label{item:ind} $\dim_k R/(x^d,y^n) \geq |H|=|\basis_0|$
    \item \label{item:cm} The ring $R$ is Cohen-Macaulay if and only if $\basis_0$ is a basis of $R/(x^d,y^n)$ over
    $k$ if and only if $g_2,h_2 \geq 0$.
  \end{enumerate}
\end{thm}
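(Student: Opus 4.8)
The plan is to dispatch \ref{item:cramer} first by pure determinant bookkeeping, then \ref{item:ind} as an immediate corollary of the linear independence already established, and finally to spend the bulk of the effort on the equivalences in \ref{item:cm}. For \ref{item:cramer}: Lemma~\ref{lem:sizecand} already gives $|B_0| = |H|$, and Notation~\ref{notn:vectorx} makes $\basis_0$ a bijective image of $B_0$, so $|\basis_0| = |H|$. The three determinants are all equal by the identities $a_1 = a_3 - a_2$ and $b_1 = b_2 - b_3$ recorded in Notation~\ref{notn:b0} (column/row operations: the second and third matrices differ from each other and from the first by adding one column or row to another). So the real content is that $|H|$ equals, say, $a_3 b_2 - a_2 b_3$. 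I would prove this by a direct lattice-index computation: $H$ is the image of $\bbz^2$ under the map $\vct{a}{b} \mapsto a(e,\ell) + b(f,m) \bmod \subgp{d}{n}$, and Lemmas~\ref{lem:notcong} and~\ref{lem:123}(iv) show that the kernel of this map (intersected with $\bbn \times \bbn$, then extended) is exactly the sublattice generated by $(a_1,b_1)$ and $(a_3,-b_3)$ — equivalently $(a_3-a_2,b_2-b_3)$ and $(a_3,-b_3)$. The index of that sublattice in $\bbz^2$ is the absolute value of the determinant, giving $|H| = a_3 b_2 - a_2 b_3$. Alternatively, and perhaps more cleanly, one simply counts $|B_0|$ directly from the rectangle-minus-rectangle description in Notation~\ref{notn:b0}: $|B_0| = a_3 b_2 - a_2 b_3$ by inclusion–exclusion on the two overlapping rectangles, and then invoke Lemma~\ref{lem:sizecand}.

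Part \ref{item:ind} is then immediate: by Lemma~\ref{lem:basis0} the set $\basis_0$ is $k$-linearly independent in $R/(x^d,y^n)$, and it has $|H|$ elements by \ref{item:cramer}, so $\dim_k R/(x^d,y^n) \geq |H|$.

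For \ref{item:cm}, I would prove the chain of equivalences as: $R$ is CM $\iff$ $\dim_k R/(x^d,y^n) = |H|$ $\iff$ $\basis_0$ is a $k$-basis $\iff$ $g_2, h_2 \geq 0$. The first equivalence is Theorem~\ref{thm:matsu} combined with Theorem~\ref{thm:subgp} (which gives $e((x^d,y^n)) = |H|$), since $(x^d,y^n)$ is a system of parameters. The second equivalence follows because $\basis_0$ is always independent of size $|H|$ (parts \ref{item:cramer}, \ref{item:ind}), so it spans iff the total dimension is exactly $|H|$. The heart of the matter is the last equivalence, and here I would argue as follows. For the "if" direction: suppose $g_2, h_2 \geq 0$. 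Then $(g_2,h_2)$ is a valid choice for the triple $(g_3,h_3)$ in equation~\eqref{eqn:(e,l)} with $a_3$ replaced by... no — rather, I claim $b_2 \mid$ everything forces collapse; more carefully, $g_2,h_2 \geq 0$ together with the minimality of $b_2$ forces $b_2 = \ord((f,m), \qtgp{d}{n})$ and $a_2 = 0$ (if $a_2 > 0$ one could use \eqref{eqn:(f,m)} itself to contradict minimality of $a_3$ or produce a smaller configuration), and symmetrically this pins down $a_3$ and $b_3 = 0$. Then $B_0$ becomes the full rectangle $\{(a,b) : a < a_3, b < b_2\}$, and one checks via Remark~\ref{rmk:quadrant}(ii) and the congruence-reduction argument of Lemma~\ref{lem:sizecand} that every monomial of $R$ reduces mod $(x^d,y^n)$ into the span of $\basis_0$; hence $\basis_0$ spans, i.e. is a basis. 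For the "only if" (contrapositive): if $g_2 < 0$ or $h_2 < 0$, then I produce a monomial outside $(x^d,y^n)$ whose congruence class is already represented in $\basis_0$ by a \emph{different} monomial, witnessing $\dim_k R/(x^d,y^n) > |H|$ — concretely, $\compvec{x}{0}{b_2}$ and $\compvec{x}{a_2}{0}$ are congruent mod $\subgp{d}{n}$ by \eqref{eqn:(f,m)}, and when $(g_2,h_2)$ has a negative coordinate the reduction arguments used above break down precisely at this pair, leaving two incongruent-looking but actually congruent basis candidates, both nonzero in the quotient.

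The main obstacle I anticipate is the "only if" direction of the last equivalence — showing that a negative coordinate in $(g_2,h_2)$ genuinely forces an \emph{extra} dimension rather than merely obstructing one particular spanning argument. The clean way is to connect back to Section~\ref{sec:multiplicity}: by Proposition~\ref{prop:empty}, $R$ is CM iff $B_n = \emptyset$, and a negative coordinate in $(g_2,h_2)$ means the monomial $\compvec{x}{0}{b_2}$ lies in the class $(p,q) = \vct{0}{b_2} \bmod \subgp{d}{n}$ but is \emph{not} of the form $\alpha_{p,q} X^i Y^j$ — because $\compvec{x}{a_2}{0}$ has strictly smaller degree in one variable yet neither divides the other — so this class contributes a nonzero element to some $B_n$, forcing $R$ non-CM. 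Making the bridge between the Section~\ref{sec:fourgen} notation ($a_i, b_i, g_i, h_i$) and the Section~\ref{sec:multiplicity} notation ($\alpha_{p,q}, {}_i\beta, \beta_j, B_n$) precise is the fiddly but essential step; once that dictionary is in place, everything else is bookkeeping already carried out in Lemmas~\ref{lem:notcong}, \ref{lem:123} and \ref{lem:sizecand}.
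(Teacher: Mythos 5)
Parts \ref{item:cramer} and \ref{item:ind} of your proposal coincide with the paper's proof: the inclusion--exclusion count $|B_0| = a_3b_2-(a_3-a_1)(b_2-b_1)=a_3b_2-a_2b_3$ together with Lemma~\ref{lem:sizecand} is exactly what the paper does, and \ref{item:ind} is just Lemma~\ref{lem:basis0}. Your chain of equivalences in \ref{item:cm}, and your treatment of the ``only if'' direction --- adjoining $\compvec{x}{0}{b_2}$ to $\basis_0$ and checking that the enlarged set is still independent, so that $\dim_k R/(x^d,y^n)>|H|$ --- also match the paper.

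The gap is in your ``if'' direction. You assert that $g_2,h_2\geq 0$ together with the minimality of $b_2$ forces $a_2=0$ and $b_3=0$, so that $B_0$ becomes a full rectangle. This is false. Take $R=k[x^7,x^6y,x^4y^3,y^7]$, so $(e,\ell)=(6,1)$ and $(f,m)=(4,3)$: one checks $b_2=1,2$ fail the positivity requirement in \eqref{eqn:(f,m)}, while $b_2=3$, $a_2=2$ give $-2(6,1)+3(4,3)=(0,7)$, i.e.\ $g_2=0$ and $h_2=7\geq 0$ with $a_2=2>0$; this ring is Cohen--Macaulay (it is an instance of the paper's corollary with $\ell=1$, $n=qm+r$, where $a_2=m-r$ is typically positive). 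The correct argument, which the paper gives, does not normalize $B_0$ to a rectangle at all: from $g_2,h_2\geq 0$ one gets $\compvec{x}{0}{b_2}=\compvec{x}{a_2}{0}$ or $\compvec{x}{0}{b_2}\in\compvec{x}{a_2}{0}(x^d,y^n)$, and one uses this single relation, together with Remark~\ref{rmk:quadrant} and the reduction steps of Lemma~\ref{lem:sizecand}, to push every $\compvec{x}{a'}{b'}$ with $(a',b')\notin B_0$ either into $(x^d,y^n)$ or onto a monomial indexed by $B_0$. Your alternative suggestion of routing the ``only if'' direction through Proposition~\ref{prop:empty} and the sets $B_n$ is viable but is left as an unexecuted sketch; as written, the spanning half of \ref{item:cm} does not go through.
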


\begin{proof}
  \ref{item:cramer}: We have $|\basis_0| = |B_0| = |H|$ by Lemma~\ref{lem:sizecand}.
  The definition of $B_0$ gives
  \[
    |B_0| = a_3b_2 - (a_3-a_1)(b_2-b_1) = a_3b_2-a_2b_3
  \]
  The rest again follows from $\eqref{eqn:(d,n)} = \eqref{eqn:(f,m)} + \eqref{eqn:(e,l)}$.
  \separate

  \ref{item:ind}: By Lemma~\ref{lem:basis0}, the set $\basis_0$ is linearly independent in $R/(x^d,y^n)$
  over $k$.
  \separate

  \ref{item:cm}: If $g_2<0$ or $h_2<0$, then $(a,b)=(0,b_2)$ and $(a',b')=(a_2,0)$ satisfy
  Lemma~\ref{lem:indep} by \eqref{eqn:(f,m)}. Let us verify Lemma~\ref{lem:indep}\ref{item:not0} for
  $(0,b_2)$ and $(a',b') \notin B_0$. By Remark~\ref{rmk:quadrant} we may assume that $b' < b_2$.
  If $b'>0$, then $\vct{0}{b_2-b'} = \vct{a'}{0} + (g,h)$ and \ref{item:not0} is satisfied by the linear
  independence of $\basis_0$ in $R/(x^d,y^n)$ over $k$. If $b'=0$, then $a' \geq a_3 > a_2$. If
  $\vct{0}{b_2} = \vct{a'}{0} + (g,h)$ with $g,h \geq 0$, then $g_2,h_2 \geq 0$ in \eqref{eqn:(f,m)},
  contradicting our assumption. Therefore Lemma~\ref{lem:indep}\ref{item:not0} holds for $(a,b)=(0,b_2)$
  and hence $\basis_0 \cup \{\vec{x}^{\vct{0}{b_2}}\}$ is linearly independent in $R/(x^d,y^n)$ over $k$.
  \separate

  If $g_2,h_2 \geq 0$, then $\compvec{x}{0}{b_2} = \compvec{x}{a_2}{0}$ or $\compvec{x}{0}{b_2} \in
  \compvec{x}{a_2}{0}(x^d,y^n)$. In the first case, for $a,b \in \bbn$ we have $\compvec{x}{a}{b}
  = \compvec{x}{a+qa_2}{b-qb_2}$ for any $q \in \bbz$. So by the definition of $B_0$ and
  Remark~\ref{rmk:quadrant} we see that for all $(a',b') \notin B_0$ either $\compvec{x}{a'}{b'} \in
  (x^d,y^n)$ or $\compvec{x}{a'}{b'} =\compvec{x}{a}{b}$ for some $(a,b) \in B_0$.
  Therefore $\basis_0$ is a basis of $R/(x^d,y^n)$ over $k$.
  \separate

  Finally, Theorems~\ref{thm:matsu} and \ref{thm:subgp} show that
  $R$ is Cohen-Macaulay if and only if $\dim_k R/(x^d,y^n) = |H|$. By \ref{item:ind}, $\dim_k R/(x^d,y^n)=|H|$
  if and only if $\basis_0$ is a basis of $R/(x^d,y^n)$ over $k$ if and only if $g_2,h_2 \geq 0$.
\end{proof}

\begin{rmk}
  In part \ref{item:cm} of Theorem~\ref{thm:fourgen}, instead of using Theorems~\ref{thm:matsu}
  and \ref{thm:subgp}, one can also prove the result using the fact that $R$ is Cohen-Macaulay if and only if
  $x^d,y^n$ is a regular sequence.
\end{rmk}

\begin{cor}
  The ring $k[x^d,x^ey^{\ell},y^n]$ is Cohen-Macaulay, where $d,n>0$ and $(e,\ell) \neq (0,0)$.
\end{cor}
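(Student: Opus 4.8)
The plan is to view $k[x^d,x^ey^\ell,y^n]$ as the $(f,m)=(0,0)$ case of the ring $R=k[x^d,x^ey^\ell,x^fy^m,y^n]$ of Theorem~\ref{thm:fourgen}: since $x^0y^0=1$, one has $k[x^d,x^ey^\ell,x^0y^0,y^n]=k[x^d,x^ey^\ell,y^n]$, and this substitution is permitted because the standing hypotheses of the section ($d,n>0$, $e,f,\ell,m\geq 0$, $(e,\ell)\neq(0,0)$) impose no condition on $(f,m)$. Consequently Notation~\ref{notn:abch}, the lemma giving $a_3>a_2$ and $b_2>b_3$, and Theorem~\ref{thm:fourgen} all apply verbatim, and it suffices by Theorem~\ref{thm:fourgen}\ref{item:cm} to verify that the integers $g_2,h_2$ attached to this ring are nonnegative.

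The one computation to carry out is to read off $a_2,b_2,g_2,h_2$ from \eqref{eqn:(f,m)} when $(f,m)=(0,0)$. That equation becomes $-a_2(e,\ell)=(g_2,h_2)$ with $a_2\geq 0$. Since $e,\ell\geq 0$ and $(e,\ell)\neq(0,0)$, the vector $-a_2(e,\ell)$ has nonpositive entries and is different from $(0,0)$ whenever $a_2>0$; hence the defining requirement on \eqref{eqn:(f,m)} — that some coordinate of $(g_2,h_2)$ be positive or else $(g_2,h_2)=(0,0)$ — forces $a_2=0$, and then $(g_2,h_2)=(0,0)\in\subgp{d}{n}$, which is already attained at $b_2=1$. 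Thus $b_2=1$, $a_2=0$, and $g_2=h_2=0\geq 0$, so Theorem~\ref{thm:fourgen}\ref{item:cm} yields that $R$ is Cohen-Macaulay.

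I do not expect a genuine obstacle: the argument is essentially a specialization, and the only point needing a moment's care is to confirm that setting $(f,m)=(0,0)$ breaks nothing earlier in the section — it does not, and in particular the lemma $a_3>a_2$, $b_2>b_3$ still holds here since $a_3=\ord((e,\ell),\qtgp{d}{n})\geq 1>0=a_2$ and $b_2=1>0=b_3$. Alternatively one could argue directly that $x^d,y^n$ is a regular sequence on $k[x^d,x^ey^\ell,y^n]$, as indicated in the remark following Theorem~\ref{thm:fourgen}, but disentangling that for this semigroup essentially re-derives what Theorem~\ref{thm:fourgen} already packages, so the specialization route is cleanest.
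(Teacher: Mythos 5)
Your proposal is correct and is essentially the paper's own argument: the paper proves the corollary by taking $(f,m)=u_1(d,0)+u_2(e,\ell)+u_3(0,n)$ for arbitrary $u_1,u_2,u_3\in\bbn$ and applying Theorem~\ref{thm:fourgen}, of which your choice $(f,m)=(0,0)$ is the instance $u_1=u_2=u_3=0$. Your explicit verification that $a_2=0$, $b_2=1$, $(g_2,h_2)=(0,0)$ and that the lemma $a_3>a_2$, $b_2>b_3$ survives the specialization is a correct (and slightly more detailed) filling-in of what the paper leaves implicit.
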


\begin{proof}
  Take $(f,m) = u_1(d,0) + u_2(e,\ell) + u_3(0,n)$ for any $u_1,u_2,u_3 \in \bbn$.
\end{proof}

\begin{thm} \label{thm:basis}
  We can use the following algorithm to obtain a basis of $R/(x^d,y^n)$ over $k$.
  \begin{enumerate}[label=\arabic*,align=left,leftmargin=*,nosep]
    \item \label{item:initial} Let $B = B_0$.
    \item Let $base = a_1$, $a^*=a_2$, $b^*=b_2$, $g^*=g_2$ and $h^*=h_2$.
    \item While $g^*<0$ or $h^*<0$, do the following steps.
    \item \label{item:geqa1}
    If $a^* \geq a_1$, then:\\
    Replace $B$ by $B \cup \{ (0,b^*) + (u,v) \mid u < base \text{ and } v < b_1 \}$.\\
    Replace $a^*$ by $a^*-a_1$, $b^*$ by $b^*+b_1$, $g^*$ by $g^*+g_1$ and
    $h^*$ by $h^*+h_1$.
    \item \label{item:lla1}
    If $a^* \leq a_1 - base$, then:\\
    Replace $B$ by $B \cup \{ (0,b^*) + (u,v) \mid u < base \text{ and } v < b_2 \}$.\\
    Replace $a^*$ by $a^*+a_2$, $b^*$ by $b^*+b_2$, $g^*$ by $g^*+g_2$
    and $h^*$ by $h^*+h_2$.
    \item \label{item:lta1}
    If $a_1 - base < a^* < a_1$, then:\\
    Replace $B$ by $B \cup \{ (0,b^*) + (u,v) \mid (u < base \text{ and } v < b_1) \text{ or }
    (u < a_1-a^* \text{ and } v < b_2)\}$.\\
    Replace $a^*$ by $a^*+a_2$, $b$ by $b^*+b_2$, $g^*$ by $g^*+g_2$,
    $h^*$ by $h^*+h_2$ and $base$ by $a_1-a^*$.
  \end{enumerate}
  After the algorithm stops, the set of monomials $\basis = \{ \vec{x}^{\vct{a}{b}} \mid
  (a,b) \in B\}$ forms a basis of $R/(x^d,y^n)$ over $k$.
\end{thm}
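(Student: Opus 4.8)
The plan is to track, throughout the algorithm, an ``active vector'' $\compvec{x}{a^*}{b^*}$ together with the ``active exponent pair'' $(g^*,h^*)$, and to maintain two invariants: (1) the set of monomials $\{\compvec{x}{a}{b} \mid (a,b)\in B\}$ is linearly independent in $R/(x^d,y^n)$ over $k$, and (2) the relation $\compvec{x}{a^*}{b^*} = x^{g^*}y^{h^*}\compvec{x}{a_1}{b_1 - b^*_{\mathrm{init}}}$ --- more precisely, that $a^*(e,\ell) + b^*(f,m) \equiv (g^*,h^*) \pmod{\subgp{d}{n}}$ and that the current pair $(a^*,b^*)$, $(g^*,h^*)$ arises from $\eqref{eqn:(f,m)}$ by repeatedly adding $\eqref{eqn:(d,n)}$ or subtracting $\eqref{eqn:(e,l)}$. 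The initialization is Lemma~\ref{lem:basis0} for invariant (1), and Notation~\ref{notn:abch} with $\eqref{eqn:(f,m)}$ for invariant (2). The loop runs only while $g^*<0$ or $h^*<0$, i.e.\ while the active monomial is genuinely ``missing'' from the span of $B$; when it terminates we will have $g^*,h^*\geq 0$, which by Theorem~\ref{thm:fourgen}\ref{item:cm}-style reasoning means the active vector (and everything beyond it) lies in $(x^d,y^n)$ or is already represented.

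First I would verify that each of the three branches (Steps~\ref{item:geqa1},~\ref{item:lla1},~\ref{item:lta1}) is exhaustive and that exactly one applies: the cases $a^*\geq a_1$, $a^*\leq a_1-base$, and $a_1-base<a^*<a_1$ partition the possibilities once we know $0\le a^* $ and $a^* < a_1 + a_2$ or similar bounds, which follow from the update rules and from $a_3>a_2$, $b_2>b_3$. Next, for each branch I would check that the newly added pairs $(0,b^*)+(u,v)$ are (a) not already in $B$, (b) pairwise inequivalent mod $\subgp{d}{n}$, and (c) inequivalent to the pairs already in $B$, so that invariant (1) is preserved; this is exactly verifying conditions \ref{item:not0} and \ref{item:ind} of Lemma~\ref{lem:indep}, and the arguments will mirror those in the proof of Lemma~\ref{lem:sizecand} and Lemma~\ref{lem:basis0}, using the minimality of $a_3$ and $b_2$ (via Lemma~\ref{lem:notcong}) and the structural facts in Lemma~\ref{lem:123}. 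The role of $base$ is to record the width of the ``staircase'' of already-covered pairs at the current height $b^*$; the geometry is precisely that of the pictures following Notation~\ref{notn:b0}, shifted downward by $b^*$.

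The bookkeeping heart of the argument is that, after each branch, $B$ equals the set of all $(a,b)\in\bbn\times\bbn$ with $\compvec{x}{a}{b}\notin(x^d,y^n)$ and $b \le b^*$ (plus possibly part of row $b^*$), together with a guarantee that no pair with $b>b^*$ in the ``good region'' has been skipped --- equivalently, that $B\cup\{\compvec{x}{a^*}{b^*}\text{ and its shifts}\}$ already accounts for one representative of every class in $H$ with representative-height $\le b^*$. The update to $(a^*,b^*,g^*,h^*)$ is the image of the active equation under adding $\eqref{eqn:(d,n)}$ (Step~\ref{item:geqa1}, when the active $x$-exponent is large enough to cancel $a_1$) or under adding $\eqref{eqn:(f,m)}$ again, i.e.\ subtracting $\eqref{eqn:(e,l)}$ would be the wrong sign --- rather, adding a copy of $\eqref{eqn:(f,m)}$ shifts $b^*$ up by $b_2$ and $a^*$ up by $a_2$ in Steps~\ref{item:lla1} and~\ref{item:lta1}. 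I would prove by induction on loop iterations that these updates keep $(a^*,b^*)$ in the range needed for the branch analysis and strictly increase $b^*$, hence the loop terminates (since once $b^* \ge b_1$ sufficiently, or after finitely many steps governed by $\ord((f,m),\qtgp{d}{n})$, we reach $g^*,h^*\ge 0$); termination also follows because the algorithm is effectively walking the finite staircase of $B_0$ translated upward and must exhaust $H$.

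The main obstacle I anticipate is the middle case, Step~\ref{item:lta1}, where $a_1-base<a^*<a_1$: here the added region is an L-shaped union of two rectangles rather than a single rectangle, and one must show both that this L-shape exactly fills the gap (no over-counting against earlier rows, no omission) and that the simultaneous update of $base$ to $a_1-a^*$ correctly records the new staircase width for the next iteration. Proving that the two rectangles in Step~\ref{item:lta1} are disjoint from each other and from $B$, and that their union together with the shifted active vector covers precisely the right set of congruence classes, will require careful application of Lemma~\ref{lem:notcong} and Lemma~\ref{lem:123}\ref{item:inside} to rule out the ``diagonal'' coincidences $q(a_3-a_2,b_2-b_3)$. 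Once that case is handled, Steps~\ref{item:geqa1} and~\ref{item:lla1} are the degenerate rectangular specializations and follow by the same method; the final count $|B|=|H|$ at termination, combined with linear independence, gives that $\basis$ is a basis by Theorem~\ref{thm:fourgen}\ref{item:ind} and Theorem~\ref{thm:matsu}.
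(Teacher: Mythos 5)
Your overall architecture (induction on loop iterations, maintaining linear independence via Lemma~\ref{lem:indep}, using the active relation $-a^*(e,\ell)+b^*(f,m)=(g^*,h^*)$ obtained by adding copies of \eqref{eqn:(d,n)} and \eqref{eqn:(f,m)}, and bounding termination by $\ord((f,m),\qtgp{d}{n})$) matches the paper's proof. But your concluding step contains a genuine error: you claim that at termination $|B|=|H|$ and that spanning then follows from Theorem~\ref{thm:fourgen}\ref{item:ind} and a counting argument. This is false except in the trivial case. If $R$ is not Cohen-Macaulay --- the only case in which the while-loop executes at all --- then $\dim_k R/(x^d,y^n) > |H|$, and the final $B$ has strictly more than $|H|$ elements (Corollary~\ref{cor:length} bounds it by $|H|(|H|+1)/2$). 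Consequently distinct elements of the final $B$ \emph{can and do} lie in the same congruence class of $H$; your requirement that newly added pairs be ``pairwise inequivalent mod $\subgp{d}{n}$ and inequivalent to the pairs already in $B$'' is both too strong and not what Lemma~\ref{lem:indep} asks for. What must be verified is the weaker divisibility condition \ref{item:not0}: whenever $\vct{a}{b}-\vct{a'}{b'}=(g,h)\in\subgp{d}{n}$, one needs $g<0$ or $h<0$ or $(g,h)=(0,0)$. The paper checks this in three cases according to the position of $b'$ relative to $b_1$ and $b_2$, using the minimality of $b_2$ and the sign of $(g^*,h^*)$.

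Because counting cannot establish spanning here, you need a direct argument, and this is the piece missing from your proposal. The paper carries an additional invariant: after each iteration, $\compvec{x}{a'}{b'}\in(x^d,y^n)$ for every $(a',b')\notin B$ with $b'<b^*$. At termination $g^*,h^*\geq 0$ gives $\compvec{x}{0}{b^*}=\compvec{x}{a^*}{0}$ or $\compvec{x}{0}{b^*}\in\compvec{x}{a^*}{0}(x^d,y^n)$, and combining this with the invariant and Remark~\ref{rmk:quadrant} shows every monomial outside $\{\compvec{x}{a}{b}\mid(a,b)\in B\}$ is either in $(x^d,y^n)$ or equal to some $\compvec{x}{a}{b}$ with $(a,b)\in B$. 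You should replace your final paragraph's appeal to $|B|=|H|$ with this invariant-plus-termination argument; the rest of your outline, including the case analysis for Step~\ref{item:lta1}, is on the right track.
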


\begin{rmk} \label{rmk:algor}
  Theorem~\ref{thm:basis} only needs to use information from \eqref{eqn:(d,n)} and
  \eqref{eqn:(f,m)}, or equivalently, from \eqref{eqn:(f,m)} and \eqref{eqn:(e,l)}. Given the equation
  \begin{equation} \label{eqn:algor}
    -a^*(e,\ell) + b^*(f,m) = (g^*,h^*),
  \end{equation}
  Step~\ref{item:geqa1}\ corresponds to $\eqref{eqn:algor} + \eqref{eqn:(d,n)}$ and
  Steps~\ref{item:lla1}\ and \ref{item:lta1}\ correspond to $\eqref{eqn:algor} + \eqref{eqn:(f,m)}$.
  Furthermore, in each iteration of the algorithm, the new elements added to the set $B$
  are in one-to-one correspondence with those in $\{ (a,b) \in B_0 \mid a^* \leq a < a^* + base \}$.
\end{rmk}

\begin{proof}[Proof of Theorem~\ref{thm:basis}]
  First, we note by induction that throughout the algorithm,
  \begin{enumerate}[label=(\alph*),align=left,leftmargin=*,nosep]
    \item $a^* + base \leq a_3$,
    \item the value of $base$ is always positive and weakly decreasing, and
    \item if $a,b,a',b' \in \bbn$, $a' \leq a$, $b' \leq b$ and $(a,b) \in B$, then $(a',b') \in B$.
  \end{enumerate}
  Let $^u$ denote the updated value of a variable after an iteration of the algorithm. We note also
  that in Steps~\ref{item:geqa1}, \ref{item:lla1} and \ref{item:lta1}:
  \begin{enumerate}[resume,label=(\alph*),align=left,leftmargin=*,nosep]
    \item \label{item:notequiv}
    Let $C = B^u \setminus B$ and $(a,b) \in C$. Then $\vct{a}{b} \equiv \vct{a+a^*}{b-b^*}$
    and $(a+a^*,b-b^*) \in B_0$. Hence if $(a',b') \in C$ such that $(a,b) \neq (a',b')$, then $\vct{a}{b}
    \not\equiv \vct{a'}{b'}$.
  \end{enumerate}
  \separate

  We will now prove by induction on the number of iterations that after
  each iteration of the algorithm,
  \begin{enumerate}[resume,label=(\alph*),align=left,leftmargin=*,nosep]
    \item \label{item:inductind} the set $\basis^u = \{\compvec{x}{a}{b} \mid (a,b) \in B^u\}$ is linearly
    independent in $R/(x^d,y^n)$ over $k$, and
    \item \label{item:is0}
    $\vec{x}^{\vct{a'}{b'}} \in (x^d,y^n)$ for all $(a',b') \notin B^u$ such that $b'<b^{*u}$.
  \end{enumerate}
  \separate

  The base case of $B=\emptyset$, i.e.\ $B^u=B_0$, is given by Theorem~\ref{thm:fourgen}\ref{item:ind}
  and Remark~\ref{rmk:quadrant}. In the induction step, we will first show \ref{item:inductind}
  by using Lemma~\ref{lem:indep}.
  \separate

  Let $(a,b) \in C = B^u \setminus B$ and $(a',b') \in B$ such that $\vct{a}{b} \equiv \vct{a'}{b'}$.
  If $(a',b') \in B_0$, then $(a',b') = (a+a^*,b-b^*)$ and $\vct{a}{b} - \vct{a+a^*}{b-b^*} =
  (g^*,h^*)$. By assumption, $g^*<0$ or $h^*<0$, so $(a,b)$ and $(a',b')$ satisfy
  Lemma~\ref{lem:indep}. If $(a',b') \notin B_0$, then we have $\vct{a}{b-b_2} \equiv
  \vct{a'}{b'-b_2}$ and by the linear independence of $\basis$, $(a,b)$ and $(a',b')$ again satisfy
  Lemma~\ref{lem:indep}.
  \separate

  So suppose that $(a',b') \notin B$ and $\vct{a}{b} - \vct{a'}{b'} = (g,h) \in \subgp{d}{n}$. Let us
  verify Lemma~\ref{lem:indep}\ref{item:not0}. By the proof of Lemma~\ref{lem:basis0}, we may
  assume that $b'<b$.
  \separate

  \textit{Case 1:} $b' \geq b_2$. Lemma~\ref{lem:indep}\ref{item:not0} holds from
  $\vct{a}{b-b'}-\vct{a'}{0} = (g,h)$ and the linear independence of $\basis$.
  \separate

  \textit{Case 2:} $b_2>b'\geq b_1$. By the definition of $B_0$ we have $a' \geq a_1$. Let $q \in \bbn$
  be such that $a'-qa_1, b'-qb_1\geq 0$ but one of $a'-(q+1)a_1$ or $b'-(q+1)b_1$ is negative,
  so that $\vct{a}{b} - \vct{a'-qa_1}{b'-qb_1} = \vct{a}{b} - \vct{a'}{b'} + q\vct{a_1}{b_1} =
  (g+qg_1,h+qh_1)$. If $(a'-qa_1,b'-qb_1) \in B_0$, then $g+qg_1=g^*<0$ or $h+qh_1=h^*<0$,
  so Lemma~\ref{lem:indep}\ref{item:not0} holds for $(a,b)$ and $(a',b')$.
  Otherwise, replacing $(a'-qa_1,b'-qb_1)$ by $(a',b')$, we are reduced to the case where $b'<b_1$.
  \separate

  \textit{Case 3:} $b_1 > b'$. By the definition of $B_0$ we have $a' \geq a_3$. Then $\vct{a'}{b'}
  - \vct{a+a^*}{b-b^*} = \vct{a'}{b'} - \vct{a}{b} + \vct{a}{b} - \vct{a+a^*}{b-b^*}
  = (g^*-g,h^*-h)$. If $b' \geq b-b^*$, then $0>g^* \geq g$ or $0>h^* \geq h$ and
  Lemma~\ref{lem:indep}\ref{item:not0} is satisfied. If $b'<b-b^*$, then $\vct{0}{b-b^*-b'}
  = \vct{a'-(a+a^*)}{0} + (g-g^*,h-h^*)$. By the minimality of $b_2$ we have $g-g^*,h-h^*
  \leq 0$ and again we are done.
  \separate

  Now we verify \ref{item:is0}. Let $(a',b') \notin B^u$ with $b'<b^{*u}$. By induction, we may
  assume that $b' \geq b^*$ and by Remark~\ref{rmk:quadrant} we may assume that $a' < base$,
  so we only need to consider Step~\ref{item:lta1} with $a'\geq a_1-a^*$ and $b'\geq b^*+b_1$.
  We have $\vct{a_1-a^*}{b^*+b_1} = \vct{a_1}{b_1} + (g^*,h^*) \in \subgp{d}{n}$, so
  $\compvec{x}{a_1-a^*}{b^*+b_1} \in (x^d,y^n)$ and the result follows from
  Remark~\ref{rmk:quadrant}.
  \separate

  Finally, the algorithm must stop at or before $b^* = \ord((f,m),\qtgp{d}{n})$. After the algorithm
  stops, we already know that $\basis$ is linearly independent by \ref{item:inductind}. By
  \eqref{eqn:algor} we have $\compvec{x}{0}{b^*} = \compvec{x}{a^*}{0}$ or
  $\compvec{x}{0}{b^*} \in \compvec{x}{a^*}{0}(x^d,y^n)$. By \ref{item:is0} and
  Remark~\ref{rmk:quadrant} we see that for all $(a',b') \notin B$ either $\compvec{x}{a'}{b'} \in
  (x^d,y^n)$ or $\compvec{x}{a'}{b'}=\compvec{x}{a}{b}$ for some $(a,b) \in B$.
  Therefore $\basis$ is a basis of $R/(x^d,y^n)$ over $k$.
\end{proof}

\begin{rmk}
  Each iteration of the algorithm gives $\vct{0}{b^*} \equiv \vct{a^*}{0}$. Since the algorithm
  must stop at or before $b^* = \ord((f,m),\qtgp{d}{n})$, we cannot have $\vct{0}{b^{1*}}
  \equiv \vct{0}{b^{2*}}$ for different values $b^{1*},b^{2*}$ of $b^*$. So the number of
  iterations of the algorithm is at most $a_3 \leq \ord((e,\ell),\qtgp{d}{n}) \leq |H| \leq dn$.
\end{rmk}

\begin{cor} \label{cor:length}
  We have $\dim_k R/(x^d,y^n) \leq |H|(|H|+1)/2 \leq dn(dn+1)/2$.
\end{cor}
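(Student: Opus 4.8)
The plan is to run the algorithm of Theorem~\ref{thm:basis} and then count the output by congruence class. If $B$ is the set the algorithm returns and $N$ the number of iterations, then $\dim_k R/(x^d,y^n) = |B|$, and by construction $B$ is the disjoint union of $B_0$ with the sets $C_1,\dots,C_N$ of pairs newly added in the successive iterations. For $c \in H$ let $k_c = \#\{\,(a,b) \in B : \vct ab \equiv c \pmod{\subgp dn}\,\}$, so that $\dim_k R/(x^d,y^n) = \sum_{c\in H} k_c$; it then suffices to bound each $k_c$.

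The two facts I would use are: (i) by Lemma~\ref{lem:sizecand} and its proof, $B_0$ meets every congruence class in exactly one pair, say $(a_c,b_c)$; and (ii) by point~\ref{item:notequiv} in the proof of Theorem~\ref{thm:basis} together with the last sentence of Remark~\ref{rmk:algor}, the pairs added in iteration $j$ inject into $B_0$ via $(a,b) \mapsto (a+a^*_j,b-b^*_j)$ with image $\{(a,b)\in B_0 : a^*_j \le a < a^*_j + base_j\}$. Hence $C_j$ contains at most one pair congruent to $c$, and it contains one only when $a^*_j \le a_c$, so $k_c \le 1 + \#\{\,j : a^*_j \le a_c\,\}$. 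Now the values $a^*_1,\dots,a^*_N$ are pairwise distinct (each iteration gives $\vct 0{b^*_j} \equiv \vct{a^*_j}0$, and the $b^*_j$ are pairwise inequivalent modulo $\subgp dn$, by the remark preceding Corollary~\ref{cor:length}), and all of them are positive: if $a^*_j = 0$ then \eqref{eqn:algor} reads $(g^*_j,h^*_j) = b^*_j(f,m)$, which has nonnegative entries as $b^*_j \ge 1$ and $f,m \ge 0$, contradicting the loop condition $g^*_j < 0$ or $h^*_j < 0$. So $j \mapsto a^*_j$ embeds $\{\,j : a^*_j \le a_c\,\}$ into $\{1,\dots,a_c\}$, giving $k_c \le 1 + a_c$.

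Summing and re-indexing the classes by their $B_0$-representatives gives $\dim_k R/(x^d,y^n) \le |H| + \sum_{(a,b)\in B_0} a$. From the explicit shape of $B_0$ the columns $a < a_1$ each contribute $b_2$ pairs and the columns $a_1 \le a < a_3$ each contribute $b_1$ pairs, so $\sum_{(a,b)\in B_0} a = b_2\binom{a_1}{2} + b_1\left(\binom{a_3}{2}-\binom{a_1}{2}\right)$; using $|H| = a_1 b_2 + a_2 b_1$ from Theorem~\ref{thm:fourgen}\ref{item:cramer} and $a_2 = a_3-a_1$, $b_1 = b_2-b_3$, a short computation gives $\tfrac12|H|(a_3-1) - \sum_{(a,b)\in B_0} a = \tfrac12 a_1 a_2 b_3 \ge 0$. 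Hence
\[
  \dim_k R/(x^d,y^n) \;\le\; |H| + \tfrac12|H|(a_3-1) \;=\; \tfrac12|H|(a_3+1) \;\le\; \tfrac12|H|(|H|+1) \;\le\; \tfrac12 dn(dn+1),
\]
since $a_3 \le \ord((e,\ell),\qtgp dn) \le |H| \le dn$ (the remark preceding the corollary) and $t \mapsto t(t+1)/2$ is increasing. The delicate point is the per-class estimate $k_c \le 1 + a_c$: one must check that each newly added pair is counted in exactly one $C_j$ (the $b$-bands $[b^*_j,b^*_{j+1})$ partition cleanly across iterations), that the injection $C_j \hookrightarrow B_0$ has precisely the stated image, and that positivity of the $a^*_j$ is really needed — without it the argument only yields $\tfrac12|H|(|H|+3)$, which is too weak. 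The final identity $\tfrac12|H|(a_3-1) - \sum a = \tfrac12 a_1 a_2 b_3$ is then routine algebra.
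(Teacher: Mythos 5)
Your proof is correct, but it organizes the count differently from the paper, so it is worth comparing the two. The paper sums over iterations: by Remark~\ref{rmk:algor} the iteration with parameter $a^*$ adds at most $i_{a^*}=|\{(a,b)\in B_0 \mid a\geq a^*\}|$ elements, the occurring values of $a^*$ are pairwise distinct, and $a^*\mapsto i_{a^*}$ is injective, so the contributions are distinct elements of $\{1,\dots,|H|\}$ and $|B|\leq\sum_{i=1}^{|H|}i$ in two lines. You instead sum over congruence classes, bounding the number of basis elements in the class of $(a_c,b_c)\in B_0$ by $1+a_c$; this is the transpose of the same incidence count, since $\sum_{(a,b)\in B_0}(1+a)=\sum_{a^*\geq 0}i_{a^*}$, and both versions rest on the same two imported facts (the injection of each $C_j$ into $B_0$ with the image described in Remark~\ref{rmk:algor} and item~\ref{item:notequiv} of the proof of Theorem~\ref{thm:basis}, and the pairwise distinctness of the $a^*_j$ from the remark preceding the corollary). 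Your route needs two extra observations the paper's does not: that $a^*>0$ in every executed iteration --- your argument is right, since $a^*=0$ would force $(g^*,h^*)=b^*(f,m)\geq (0,0)$ and kill the loop condition --- and the explicit evaluation of $\sum_{(a,b)\in B_0}a$, whose closing identity $\tfrac{1}{2}|H|(a_3-1)-\sum_{(a,b)\in B_0}a=\tfrac{1}{2}a_1a_2b_3$ does check out using $|H|=a_1b_2+a_2b_1$, $a_1=a_3-a_2$ and $b_1=b_2-b_3$. In exchange you obtain the sharper intermediate bound $\dim_k R/(x^d,y^n)\leq\tfrac{1}{2}|H|(a_3+1)$, which refines the corollary whenever $a_3<|H|$; the paper's argument is shorter but stops at $\tfrac{1}{2}|H|(|H|+1)$ directly.
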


\begin{proof}
  Let us set $a^* = 0$ in Step~\ref{item:initial}. In each iteration of the algorithm, at most
  $i_{a^*} = | \{ (a,b) \in B_0 \mid a \geq a^* \} |$ elements are added to the set $B$ by
  Remark~\ref{rmk:algor}. We have $1 \leq i_{a^*} \leq |B_0| = |H| \leq dn$ and that the map
  $a^* \mapsto i_{a^*}$ is injective. Since there exist at most $|H|$ possible values of $a^*$
  before the stopping criterion is reached,
  we have $\dim_k R/(x^d,y^n) = |B| \leq \sum_{i=1}^{|H|} i = |H|(|H|+1)/2 \leq dn(dn+1)/2$.
\end{proof}

\begin{eg}
  Here we give examples showing that the algorithm is ``best possible'', in the sense that
  the maximum number of iterations can be attained. In the second example, we will show that
  the upper bound in Corollary~\ref{cor:length} is also attained.
  Let $p,q$ be distinct prime numbers.
  \begin{enumerate}[label=(\alph*),align=left,leftmargin=*,nosep]
    \item Let $R=k[x^p,x^{jpq-1}y,xy^{jpq-1},y^q]$ with $j \in \bbn$, $j>1$.
    The successive values of $\vct{0}{b^*}$ are
    \[
      (1,jpq-1),\,(2,2(jpq-1)),\,\dots,\,(pq-1,(pq-1)(jpq-1)),\,(pq,pq(jpq-1))
    \]
    and those of $\vct{a^*}{0}$ are
    \[
      ((pq-1)(jpq-1),pq-1),\,((pq-2)(jpq-1),pq-2),\,\dots,\,(jpq-1,1),\,(0,0)
    \]
    When $j=2$, $p=2$ and $q=3$ (or $p=3$ and $q=2$), we display the elements of
    $\langle B \rangle = \{ \vct{a}{b} \mid (a,b) \in B \} = \log(\basis)$ as follows.
    \begin{center}
      \begin{tabular}{c c c c c c}
        (0,0) & (11,1) & (22,2) & (33,3) & (44,4) & (55,5)\\
        (1,11)\\
        (2,22)\\
        (3,33)\\
        (4,44)\\
        (5,55)\\
        \multicolumn{6}{c}{$R=k[x^2,x^{11}y,xy^{11},y^3]$}
      \end{tabular}
    \end{center}\vspace{3pt}
    \item Let $R=k[x^p,x^{jpq+1}y,xy^{jpq+1},y^q]$ with $j \in \bbn$, $j>0$.
    The successive values of $\vct{0}{b^*}$ are
    \[
      (1,jpq+1),\,(2,2(jpq+1)),\,\dots,\,(pq-1,(pq-1)(jpq+1)),\,(pq,pq(jpq+1))
    \]
    and those of $\vct{a^*}{0}$ are
    \[
      (jpq+1,1),\,(2(jpq+1),2),\,\dots,\,((pq-1)(jpq+1),pq-1),\,(0,0)
    \]
    When $j=1$, $p=2$ and $q=3$, we display the elements of $\langle B \rangle$ as follows.
    \begin{center}
      \begin{tabular}{c c c c c c}
        (0,0) & (7,1) & (14,2) & (21,3) & (28,4) & (35,5)\\
        (1,7) & (8,8) & (15,9) & (22,10) & (29,11)\\
        (2,14) & (9,15) & (16,16) & (23,17)\\
        (3,21) & (10,22) & (17,23)\\
        (4,28) & (11,29)\\
        (5,35)\\
        \multicolumn{6}{c}{$R=k[x^2,x^7y,xy^7,y^3]$}
      \end{tabular}
    \end{center}
  \end{enumerate}
\end{eg}

\begin{rmk}
  Having found the basis $\basis$ of $R/(x^a,y^b)$ as in Theorem~\ref{thm:basis}, one may
  sort the monomials in $\basis$ and find the Hilbert polynomial $P(n)$ for $(x^a,y^b) \subseteq R$
  and the least integer $m$ such that the Hilbert polynomial equals the
  Hilbert function for all $n \geq m$ by Theorem~\ref{thm:subgp}.
\end{rmk}

\section{Projective monomial curves in $\bbp^3$}
\label{sec:p3}

In this section, we will consider rings of the form $R=k[x^n, x^{n-\ell}y^{\ell}, x^{n-m}y^m, y^n]$
with $0<\ell<m<n$. We will apply the results from Section~\ref{sec:fourgen} to obtain stronger results
for such rings $R$. In particular, Theorem~\ref{thm:p3} gives a simple criterion to determine whether
$R$ is Cohen-Macaulay and Theorem~\ref{thm:p3basis} gives a simple algorithm to generate a $k$-basis
of $R/(x^n,y^n)$.

\begin{notn}\label{notn:abchagain}
  In this section, we fix $a_i,b_i,c_i,h_i \in \bbn$, $i=1,2,3$ as follows.
  Let $c_1$ be the smallest integer such that there are $m/\!\gcd(\ell,m) \geq a_1 > 0$ and
  $b_1>0$ with
  \begin{equation} \label{eqn:c}
    a_1\ell + b_1 m = c_1 n = h_1
  \end{equation}
  
  Let $b_2$ be the smallest integer such that there are $ n/\!\gcd(\ell,n) > a_2 \geq 0$ and
  $c_2 > 0$ with
  \begin{equation} \label{eqn:b}
    -a_2\ell + b_2 m = c_2 n = h_2
  \end{equation}
  
  Let $a_3$ be the smallest positive integer such that there are $ n/\!\gcd(m,n) > b_3 \geq 0$
  and $c_3 \geq 0$ with
  \begin{equation} \label{eqn:a}
    a_3\ell - b_3 m = c_3 n = h_3
  \end{equation}
\end{notn}

\begin{rmk}
  We recall from Section~\ref{sec:intro} that for any $d \in \bbz$ we have $\ord((n-d,d),\qtgp{n}{n})
  = \ord((-d,d),\qtgp{n}{n}) = n/\!\gcd(d,n)$.
\end{rmk}

\begin{lem} \label{lem:geq}
  Let $a,b,c,d \in \bbn$.
  \begin{enumerate}[label=(\roman*),align=left,leftmargin=*,nosep]
    \item If $-a(n-\ell,\ell)+b(n-m,m)=(cn,dn)$, $b>0$ and $c \geq 0$, then $d > 0$. \label{item:cgeq0}
    \item If $a(n-\ell,\ell)-b(n-m,m)=(cn,dn)$, $a>0$ and $d \geq 0$, then $c > 0$. \label{item:dgeq0}
  \end{enumerate}
\end{lem}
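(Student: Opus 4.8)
The plan is to work coordinate‑wise and exploit the identities $(n-\ell)+\ell = n = (n-m)+m$ together with the strict chain $0<\ell<m<n$. In both parts the first move is to add the two coordinate equations: since the sum of the two entries of $(n-d,d)$ is $n$ for every $d$, summing collapses the vector equation to a single linear relation among $a,b,c,d$. After that, substituting this relation into one of the coordinate equations isolates the target variable as a nonnegative combination of the others, and ruling out the degenerate (all‑zero) case finishes the proof.

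For part \ref{item:cgeq0}, I would first sum the coordinates of $-a(n-\ell,\ell)+b(n-m,m)=(cn,dn)$ to get $(b-a)n=(c+d)n$, hence $b=a+c+d$. Plugging this into the second‑coordinate equation $-a\ell+bm=dn$ and collecting the terms in $d$ gives
\[
  d(n-m) = a(m-\ell) + cm .
\]
Because $a,c\geq 0$ and $m-\ell>0$, $m>0$, the right‑hand side is nonnegative; and if $d=0$ it is exactly $0$, which forces $a=c=0$ and therefore $b=a+c+d=0$, contradicting $b>0$. Since $n-m>0$, we conclude $d>0$.

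Part \ref{item:dgeq0} is the mirror computation. Summing the coordinates of $a(n-\ell,\ell)-b(n-m,m)=(cn,dn)$ gives $a=b+c+d$, and substituting into the second‑coordinate equation $a\ell-bm=dn$ yields
\[
  c\ell = b(m-\ell) + d(n-\ell).
\]
Here $b,d\geq 0$ and $m-\ell>0$, $n-\ell>0$, so the right‑hand side is nonnegative, and $c=0$ would force $b=d=0$, hence $a=b+c+d=0$, contradicting $a>0$. Thus $c>0$.

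I do not expect a genuine obstacle: the argument is elementary once one notices the "sum the coordinates" trick. The only points requiring care are keeping the signs consistent while eliminating a variable, and using the strict inequalities $\ell<m<n$ (not merely weak ones) precisely at the step where the nonnegative combination is shown to vanish only in the trivial case. One could also package both parts uniformly by observing that the summed relation already records $b-a = c+d\geq 0$ (resp.\ $a-b=c+d\geq 0$), so that only the boundary case where the asserted‑positive variable is $0$ needs to be excluded, which is exactly what the substituted coordinate equation does.
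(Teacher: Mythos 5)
Your proof is correct and uses essentially the same idea as the paper: sum the two coordinates to get $b-a=c+d$ (resp.\ $a-b=c+d$), then combine with one coordinate equation and the strict inequalities $0<\ell<m<n$ to force the remaining variable to be positive. The only cosmetic differences are that the paper packages the key step as the inequality $b(n-m)=cn+a(n-\ell)>(c+a)(n-m)$ rather than your exact identity $d(n-m)=a(m-\ell)+cm$, and it deduces part (ii) from part (i) by a symmetry substitution instead of repeating the computation.
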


\begin{proof}
  \ref{item:cgeq0}: Since $b>0$ we have $(a,c) \neq (0,0)$. If $c \geq 0$, then $b(n-m) = cn+a(n-\ell) >
  (c+a)(n-m)$, so $b>c+a$. Hence $dn=-a\ell + bm = (b-a-c)n>0$.\\
  \ref{item:dgeq0}: Replace $a$ by $b$, $b$ by $a$, $\ell$ by $n-m$, $m$ by $n-\ell$, $c$ by $d$
  and $d$ by $c$ in \ref{item:cgeq0}.
\end{proof}

\begin{lem}
  The definitions of $a_i,b_i,h_i$, $i=1,2,3$ in Notation~\ref{notn:abch} and \ref{notn:abchagain} coincide.
  In particular, we have $\eqref{eqn:c} = \eqref{eqn:b} + \eqref{eqn:a}$.
\end{lem}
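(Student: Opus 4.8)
The plan is to translate the Section~\ref{sec:fourgen} setup into the projective setting by taking $d=n$, $(e,\ell)=(n-\ell,\ell)$ and $(f,m)=(n-m,m)$, and then check that under this substitution the three minimality conditions defining $a_i,b_i,h_i$ in Notation~\ref{notn:abch} become exactly the conditions in Notation~\ref{notn:abchagain}. The first observation is that for a vector lying in $n\bbz \oplus n\bbz$, knowing the second coordinate determines the first: if $-a(n-\ell,\ell)+b(n-m,m)=(g,h)\in n\bbz\oplus n\bbz$, then $g+h=(b-a)n\in n\bbz$ automatically, so $g\in n\bbz$ iff $h\in n\bbz$, and moreover $g=(b-a)n-h$. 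Hence each of \eqref{eqn:(d,n)}, \eqref{eqn:(f,m)}, \eqref{eqn:(e,l)} has its first component $g_i$ pinned down by $h_i$ and the multipliers. This lets us rewrite everything purely in terms of the $y$-coordinate, i.e.\ in terms of $\ell,m,n$ as in \eqref{eqn:c}, \eqref{eqn:b}, \eqref{eqn:a}, with $h_i = c_i n$.

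Next I would match the sign/nonnegativity constraints one equation at a time. For $b_2$: Notation~\ref{notn:abch} asks for the least $b_2>0$ admitting $a_2\geq 0$ and $(g_2,h_2)$ with $g_2,h_2$ not both zero (or both zero) and $a_2<\ord((e,\ell),\qtgp{n}{n})=n/\!\gcd(\ell,n)$. Lemma~\ref{lem:geq}\ref{item:cgeq0} shows that once $b_2>0$ and $c_2=g_2/n$... actually the cleaner route: in the projective case $h_2\in n\bbz$ forces, via Lemma~\ref{lem:geq}\ref{item:cgeq0} applied with the roles matched, that $h_2>0$ whenever $g_2\geq 0$, and symmetrically; so the condition ``at least one of $g_2,h_2$ positive or both zero'' collapses to ``$c_2>0$'' as in \eqref{eqn:b}, except for the degenerate $(g_2,h_2)=(0,0)$ case which corresponds to $b_2=\ord((f,m),\qtgp{n}{n})=n/\!\gcd(m,n)$ and is subsumed. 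The same argument with Lemma~\ref{lem:geq}\ref{item:dgeq0} handles \eqref{eqn:a} versus the definition of $a_3$: the requirement $g_3,h_3\geq 0$ together with $h_3\in n\bbz$ and positivity of $a_3$ forces $c_3\geq 0$, matching \eqref{eqn:a}, with the order bound $b_3<n/\!\gcd(m,n)$ transferring directly. For $a_1,b_1$ in \eqref{eqn:(d,n)} versus \eqref{eqn:c}: the constraint $b_2\geq b_1$ from Notation~\ref{notn:abch} should be shown equivalent to (or implied by) the bound $a_1\leq m/\!\gcd(\ell,m)$ in \eqref{eqn:c}; here I would invoke the already-established identity $\eqref{eqn:(d,n)}=\eqref{eqn:(f,m)}+\eqref{eqn:(e,l)}$ from Lemma~\ref{lem:123}, which gives $a_1=a_3-a_2$, $b_1=b_2-b_3$, $h_1=h_2+h_3$, so $a_1<a_3\leq n/\!\gcd(\ell,n)$ and one checks the tighter bound $a_1\leq m/\!\gcd(\ell,m)$ separately using minimality of $c_1$. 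Once the defining data agree, the final sentence ``$\eqref{eqn:c}=\eqref{eqn:b}+\eqref{eqn:a}$'' is immediate from Lemma~\ref{lem:123}'s concluding assertion.

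The main obstacle I expect is the bookkeeping around the order bounds and the exact form of the ``not both zero'' clauses: Notation~\ref{notn:abch} imposes $a_2<\ord((e,\ell),\cdot)$ and $b_3<\ord((f,m),\cdot)$ as \emph{choices}, whereas Notation~\ref{notn:abchagain} imposes $a_2<n/\!\gcd(\ell,n)$ and $b_3<n/\!\gcd(m,n)$ and also the curve-specific bound $a_1\leq m/\!\gcd(\ell,m)$, which has no literal counterpart in the general setup. So the real content is verifying that in the projective case the general-case minimality already forces these sharper bounds — essentially that minimizing $b_2$ (resp.\ $a_3$, resp.\ $c_1$) automatically keeps the complementary multiplier below the relevant order — and that the two lists of $(a_i,b_i,h_i)$ therefore single out the same integers. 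After that, every remaining claim is a restatement of results from Section~\ref{sec:fourgen}, so no further work is needed.
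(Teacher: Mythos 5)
Your overall strategy coincides with the paper's: use Lemma~\ref{lem:geq} to show that in the projective setting the first-coordinate conditions in \eqref{eqn:(f,m)} and \eqref{eqn:(e,l)} are forced by the second-coordinate ones, so that the minimizations defining $(a_2,b_2,h_2)$ and $(a_3,b_3,h_3)$ agree with those in \eqref{eqn:b} and \eqref{eqn:a}, and then treat \eqref{eqn:c} via the identity $\eqref{eqn:(d,n)}=\eqref{eqn:(f,m)}+\eqref{eqn:(e,l)}$. Two remarks on the first part. You state the application of Lemma~\ref{lem:geq}\ref{item:dgeq0} in the trivial direction (``$g_3,h_3\geq 0$ \dots forces $c_3\geq 0$''); what is actually needed is the converse, namely that $h_3\geq 0$ together with $a_3>0$ already forces $g_3>0$, so that the extra constraints of Notation~\ref{notn:abch} cut out no candidates. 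Similarly, for $b_2$ the case $(g_2,h_2)=(0,0)$ is not ``subsumed'' by an order computation; it is simply impossible, since $g_2=0\geq 0$ would force $h_2>0$ by Lemma~\ref{lem:geq}\ref{item:cgeq0}. These are fixable slips.

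The genuine gap is in the treatment of \eqref{eqn:c}. The two definitions of $(a_1,b_1)$ minimize $h_1$ over \emph{different} constraint sets: Notation~\ref{notn:abch} requires $b_1\leq b_2$, while \eqref{eqn:c} requires $a_1\leq m/\!\gcd(\ell,m)$. Knowing from Lemma~\ref{lem:123} that the $\prec$-minimizer is $(a_3-a_2,b_2-b_3)$ does not by itself show that the minimizer of $c$ subject to $a\leq m/\!\gcd(\ell,m)$ is the same pair. One must show (i) that $(a_3-a_2,b_2-b_3)$ is admissible for \eqref{eqn:c}, i.e.\ $a_3-a_2\leq m/\!\gcd(\ell,m)$ --- this follows from $a_3\leq m/\!\gcd(\ell,m)$, which is not a consequence of ``minimality of $c_1$'' but of exhibiting an admissible triple for \eqref{eqn:a} with $a=m/\!\gcd(\ell,m)$; and (ii) that no other admissible $(a,b)$ attains $c\leq c_2+c_3$. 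Point (ii) is where the paper does the real work: Remark~\ref{rmk:quadrant} and Lemma~\ref{lem:123} give $c\geq c_2+c_3$, and then the divisibility argument $(a-(a_3-a_2))\ell=(b_2-b_3-b)m$, hence $(m/\!\gcd(\ell,m))\mid a-(a_3-a_2)$, shows that equality $c=c_2+c_3$ forces $a>m/\!\gcd(\ell,m)$, violating the constraint. Your proposal defers exactly this step (``one checks the tighter bound \dots separately using minimality of $c_1$''), and the hint you give for it is not the argument that works, so as written the identification $a_1=a_1'$, $b_1=b_1'$, $h_1=h_1'$ is unproved.
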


\begin{proof}
  Let us temporarily write $a'_i,b'_i,h'_i$, $i=1,2,3$ for the definitions of $a_i,b_i,h_i$ in this section.
  We have $e=n-\ell$ and $f=n-m$. Let us first consider \eqref{eqn:(f,m)}. Suppose that $g_2 \geq 0$.
  By Lemma~\ref{lem:geq}\ref{item:cgeq0} we have $h_2>0$, so the conditions that $g_2 >0$ or
  $(g_2,h_2)=(0,0)$ become redundant. Hence $b_2=b'_2$, $a_2=a'_2$ and $h_2=h'_2$.
  \separate

  Similarly, in \eqref{eqn:(e,l)} suppose that $h_3 \geq 0$. By Lemma~\ref{lem:geq}\ref{item:dgeq0}
  the conditions $g_3 \geq 0$ and $(g_3,h_3) \neq (0,0)$ are redundant.
  Hence $a_3=a'_3$, $b_3=b'_3$ and $h_3=h'_3$.
  \separate

  Now $\eqref{eqn:b}+\eqref{eqn:a}$ gives
  \[
    (a_3-a_2)\ell + (b_2-b_3)m = (c_2+c_3)n = h_2+h_3
  \]
  Let us show that $c_1=c_2+c_3$. First we have $a_3-a_2 \leq a_3  \leq m/\!\gcd(\ell,m)$.
  Now suppose that $a,b,c\in \bbn$ are such that $a,b>0$, $a\ell + bm = cn$
  and $(a,b) \neq (a_3-a_2,b_2-b_3)$. By Remark~\ref{rmk:quadrant} we may assume that
  $a<a_3-a_2$ or $b<b_2-b_3$. If $b<b_2-b_3$, then $c\geq c_2+c_3$ by Lemma~\ref{lem:123}.
  If $c=c_2+c_3$, then $(a-(a_3-a_2))\ell=(b_2-b_3-b)m$, so $(m/\!\gcd(\ell,m)) \mid a-(a_3-a_2)$
  and $a>m/\!\gcd(\ell,m)$. If $a<a_3-a_2$, then $b>b_2-b_3$ by Lemma~\ref{lem:123} and
  hence $c>c_2+c_3$ by the minimality of $a_3$. Therefore $h_1=h'_1$, $a_1=a'_1$ and $b_1=b'_1$.
\end{proof}

\begin{notn}
  We let $B_0,H,\compvec{x}{a}{b},\basis_0$ be as in Notation~\ref{notn:b0} and \ref{notn:vectorx}.
\end{notn}

\begin{thm} \label{thm:p3}
  Let $R = k[x^n,x^{n-\ell}y^{\ell},x^{n-m}y^m,y^n]$ and $d=\gcd(\ell,m,n)$. Then:
  \vspace{3pt}
  
  \begin{enumerate}[label=(\roman*),align=left,leftmargin=*,nosep]
    \item \label{item:cramern} $\ds n =
    d\left| \begin{matrix}
      a_3 & -b_3\\
      -a_2 & b_2
    \end{matrix} \right| =
    d\left| \begin{matrix}
      a_3 & -b_3\\
      a_1 & b_1
    \end{matrix} \right| =
    d\left| \begin{matrix}
      a_1 & b_1\\
      -a_2 & b_2
    \end{matrix} \right|$ \vspace{3pt}
    \item \label{item:cramerm} $\ds m =
    d\left| \begin{matrix}
      a_3 & c_3\\
      -a_2 & c_2
    \end{matrix} \right| =
    d\left| \begin{matrix}
      a_3 & c_3\\
      a_1 & c_1
    \end{matrix} \right| =
    d\left| \begin{matrix}
      a_1 & c_1\\
      -a_2 & c_2
    \end{matrix} \right|$ \vspace{3pt}
    \item \label{item:cramerl} $\ds \ell =
    d\left| \begin{matrix}
      c_3 & -b_3\\
      c_2 & b_2
    \end{matrix} \right| =
    d\left| \begin{matrix}
      c_3 & -b_3\\
      c_1 & b_1
    \end{matrix} \right| =
    d\left| \begin{matrix}
      c_1 & b_1\\
      c_2 & b_2
    \end{matrix} \right|$ \vspace{3pt}
    \item \label{item:p3ind} $\dim_k R/(x^d,y^n) \geq n/d= |\basis_0|$
    \item \label{item:p3cm} The ring $R$ is Cohen-Macaulay iff $\basis_0$ is a basis of $R/(x^d,y^n)$ over
    $k$ iff $b_2 \geq a_2 + c_2$.
  \end{enumerate}
\end{thm}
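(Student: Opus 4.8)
The plan is to obtain everything from Theorem~\ref{thm:fourgen} and the lemmas preceding Theorem~\ref{thm:p3} in this section, once the group $H$ is identified explicitly. The first task is to compute $|H|$. Here $e=n-\ell$ and $f=n-m$, so in $\qtgp{n}{n}$ we have $(e,\ell)\equiv(-\ell,\ell)$ and $(f,m)\equiv(-m,m)$; hence every element of $H$ has the form $(-s,s)$ with $s$ ranging over the subgroup of $\bbz/n\bbz$ generated by $\ell$ and $m$, which is $d\bbz/n\bbz$ for $d=\gcd(\ell,m,n)$. The map $(-s,s)\mapsto s$ is an isomorphism, so $|H|=n/d$. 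Combined with the determinantal identities of Theorem~\ref{thm:fourgen}\ref{item:cramer}, this gives part~\ref{item:cramern} at once: $n=d|H|=d\cdot\det$, in each of the three displayed forms.

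For parts~\ref{item:cramerm} and \ref{item:cramerl} I would invoke the identity $\eqref{eqn:c}=\eqref{eqn:b}+\eqref{eqn:a}$ proved just before Theorem~\ref{thm:p3}, i.e.\ $a_1=a_3-a_2$, $b_1=b_2-b_3$ and $c_1=c_2+c_3$. Then \eqref{eqn:a} and \eqref{eqn:b} say exactly that $(\ell,m,n)$ is annihilated by the $2\times 3$ integer matrix with rows $(a_3,-b_3,-c_3)$ and $(-a_2,b_2,-c_2)$. Since $a_3>a_2$ and $b_2>b_3$ are integers, the minor $a_3b_2-a_2b_3\geq 1>0$, so this matrix has rank $2$ and $(\ell,m,n)$ is a scalar multiple of its vector of signed $2\times 2$ minors, which one computes to be $\bigl(c_3b_2+b_3c_2,\, a_3c_2+a_2c_3,\, a_3b_2-a_2b_3\bigr)$ — precisely the determinants appearing in \ref{item:cramerl}, \ref{item:cramerm} and \ref{item:cramern}. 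Matching the third coordinate against part~\ref{item:cramern} forces the scalar to be $d$, whence $\ell=d\cdot\det$ and $m=d\cdot\det$; that the three determinant forms within \ref{item:cramerm} and \ref{item:cramerl} agree is then a one-line check from $a_1=a_3-a_2$, $b_1=b_2-b_3$, $c_1=c_2+c_3$ by elementary row operations.

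Part~\ref{item:p3ind} follows from Theorem~\ref{thm:fourgen}\ref{item:ind} together with $|\basis_0|=|B_0|=|H|=n/d$, using Lemma~\ref{lem:sizecand} and the fact that $(n-\ell,\ell)$, $(n-m,m)$ are $\mathbb{Q}$-linearly independent, so distinct elements of $B_0$ yield distinct monomials $\compvec{x}{a}{b}$. For part~\ref{item:p3cm}, Theorem~\ref{thm:fourgen}\ref{item:cm} says $R$ is Cohen-Macaulay iff $\basis_0$ is a basis iff $g_2,h_2\geq 0$, where $-a_2(e,\ell)+b_2(f,m)=(g_2,h_2)$ as in \eqref{eqn:(f,m)} (and the quantities of \eqref{eqn:(f,m)} and \eqref{eqn:b} coincide by the lemma just before Theorem~\ref{thm:p3}). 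Taking first coordinates and substituting $b_2m-a_2\ell=c_2n$ from \eqref{eqn:b} gives $g_2=-a_2(n-\ell)+b_2(n-m)=(b_2-a_2-c_2)n$, while $h_2=c_2n>0$ since $c_2>0$; hence $g_2,h_2\geq 0$ is equivalent to $g_2\geq 0$, i.e.\ to $b_2\geq a_2+c_2$. (Lemma~\ref{lem:geq}\ref{item:cgeq0} is the conceptual reason $h_2\geq 0$ is automatic here.)

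The only step requiring genuine care is the one for \ref{item:cramerm}/\ref{item:cramerl}: one must confirm that the $2\times 3$ matrix really has rank $2$, so that $(\ell,m,n)$ is pinned down up to scalar rather than merely constrained, and then that the scalar is exactly $d=\gcd(\ell,m,n)$ and not some proper multiple or divisor — which is where part~\ref{item:cramern}, and hence the group-order computation $|H|=n/d$, does the real work. Everything else is assembly on top of Theorem~\ref{thm:fourgen} and Lemma~\ref{lem:geq}.
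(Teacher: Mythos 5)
Your proposal is correct and follows essentially the same route as the paper: identify $H$ with the subgroup of $\bbz/n\bbz$ generated by $\ell$ and $m$ to get $|H|=n/d$, deduce \ref{item:cramern} and \ref{item:p3ind} from Theorem~\ref{thm:fourgen}, obtain \ref{item:cramerm} and \ref{item:cramerl} by Cramer's rule (your rank-two minor computation is just Cramer's rule spelled out), and reduce \ref{item:p3cm} to $g_2\geq 0$ via Theorem~\ref{thm:fourgen}\ref{item:cm} and Lemma~\ref{lem:geq}\ref{item:cgeq0}. The only difference is that you make explicit the details the paper leaves to the reader.
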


\begin{proof}
  We may identify $H$ with the subgroup of $\bbz/n\bbz$ generated by $\ell$ and $m$, so $|H|=
  n/d$. Therefore \ref{item:p3ind} and \ref{item:cramern} follow from
  \ref{item:cramer} of Theorem~\ref{thm:fourgen}, and \ref{item:cramerm} and \ref{item:cramerl}
  follow from Cramer's rule. Using \ref{item:cm} of Theorem~\ref{thm:fourgen}, we have
  $g_2 \geq 0$ iff $-a_2(n-\ell)+b_2(n-m) \geq 0$ iff $(b_2-a_2-c_2)n \geq 0$ iff $b_2 \geq a_2+c_2$,
  so \ref{item:p3cm} follows from Lemma~\ref{lem:geq}\ref{item:cgeq0}.
\end{proof}

\begin{cor}
  Let $\ell=1$ and $n=qm+r$ as in the Euclidean algorithm. If $r=0$, then $R$ is Cohen-Macaulay.
  If $r \neq 0$, then $R$ is Cohen-Macaulay if and only if $q+r \geq m$.
\end{cor}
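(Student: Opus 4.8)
The plan is to read off the triple $(a_2,b_2,c_2)$ of Notation~\ref{notn:abchagain} explicitly from the Euclidean data $n=qm+r$ and then invoke Theorem~\ref{thm:p3}\ref{item:p3cm}, according to which $R=k[x^n,x^{n-1}y,x^{n-m}y^m,y^n]$ is Cohen-Macaulay exactly when $b_2 \geq a_2+c_2$. Specializing \eqref{eqn:b} to $\ell=1$ (so that $\gcd(\ell,n)=1$), $b_2$ is the least positive integer for which there are integers $n > a_2 \geq 0$ and $c_2>0$ with $b_2 m = a_2 + c_2 n$.

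First I would note that, with $0 \leq a_2 < n$, necessarily $a_2 \equiv b_2 m \pmod n$ and $c_2 = \lfloor b_2 m/n\rfloor$, so the condition $c_2>0$ amounts to $b_2 m \geq n$; hence $b_2 = \lceil n/m\rceil$, which equals $q$ when $r=0$ and $q+1$ when $r\neq 0$. Minimality is immediate: any positive $b_2' \leq q$ with $r \neq 0$, or $b_2' < q$ with $r = 0$, gives $b_2' m < n$ and hence $c_2' = 0$.

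It then remains to substitute. If $r=0$, then $b_2 m = qm = n$ forces $a_2=0$ and $c_2=1$; since $n>m$ gives $q\geq 1$, we get $b_2 = q \geq 1 = a_2+c_2$, so $R$ is Cohen-Macaulay. If $r\neq 0$, then $b_2 m = (q+1)m = n+(m-r)$ with $0<m-r<m<n$, so $a_2 = m-r$ (indeed $<n$, as required by \eqref{eqn:b}) and $c_2=1$; thus $b_2\geq a_2+c_2$ reads $q+1\geq (m-r)+1$, i.e.\ $q+r\geq m$, which by Theorem~\ref{thm:p3}\ref{item:p3cm} is precisely the criterion for $R$ to be Cohen-Macaulay.

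I expect no real obstacle here: the corollary is a direct unwinding of Theorem~\ref{thm:p3} once $b_2$ is identified with $\lceil n/m\rceil$. The only steps warranting a sentence of justification are the minimality of $b_2$ and the check that $a_2 = m-r$ respects the bound $a_2 < n$ from \eqref{eqn:b}, both immediate from $1<m<n$.
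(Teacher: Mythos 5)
Your proposal is correct and follows essentially the same route as the paper: both proofs read off $(a_2,b_2,c_2)=(0,q,1)$ when $r=0$ and $(m-r,q+1,1)$ when $r\neq 0$, then apply the criterion $b_2 \geq a_2+c_2$ from Theorem~\ref{thm:p3}. The only difference is that you justify the identification $b_2=\lceil n/m\rceil$ explicitly, whereas the paper simply states the values.
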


\begin{proof}
  If $r = 0$, then $a_2=0$, $b_2=q$ and $c_2=1$, so $b_2 = q \geq 1 = a_2+c_2$. If $r \neq 0$,
  then $a_2=m-r$, $b_2=q+1$ and $c_2=1$, so $R$ is Cohen-Macaulay iff $q+1 \geq m-r+1$ iff
  $q+r \geq m$.
\end{proof}

\begin{cor}
  If $\gcd(\ell,m)=1$ and $\ell + m = n$, then $R$ is Cohen-Macaulay if and only if $m = \ell + 1$.
\end{cor}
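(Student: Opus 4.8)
The plan is to specialize Theorem~\ref{thm:p3}\ref{item:p3cm}: that part says $R$ is Cohen-Macaulay precisely when $b_2 \geq a_2 + c_2$, so the whole task is to read off $a_2, b_2, c_2$ from \eqref{eqn:b} under the hypotheses $n = \ell + m$ and $\gcd(\ell, m) = 1$.

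First I would record two elementary consequences of the hypotheses that make the bookkeeping clean: $\gcd(\ell, n) = \gcd(\ell, \ell + m) = \gcd(\ell, m) = 1$, so the side constraint on $a_2$ in \eqref{eqn:b} is simply $0 \leq a_2 < n$; and $m \geq 2$, since $1 \leq \ell < m$. Substituting $n = \ell + m$ into $-a_2 \ell + b_2 m = c_2 n$ and collecting terms yields
\[
  (b_2 - c_2)m = (a_2 + c_2)\ell.
\]
Since $\gcd(\ell, m) = 1$, this forces $\ell \mid b_2 - c_2$; writing $b_2 - c_2 = j\ell$ gives $a_2 + c_2 = jm$. As $c_2 > 0$ and $a_2 \geq 0$ we have $jm = a_2 + c_2 > 0$, hence $j \geq 1$, and therefore $b_2 = j\ell + c_2 \geq \ell + 1$. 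Conversely, $j = 1$ and $c_2 = 1$ produce the triple $a_2 = m - 1$, $b_2 = \ell + 1$, $c_2 = 1$, which satisfies all the constraints of \eqref{eqn:b} (one checks $0 \leq m - 1 < n$ and $c_2 > 0$); so the minimal value is $b_2 = \ell + 1$, and the same divisibility analysis shows $b_2 = \ell+1$ forces $j = 1$ and $c_2 = 1$, whence $a_2 + c_2 = m$.

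Feeding this into Theorem~\ref{thm:p3}\ref{item:p3cm} then gives that $R$ is Cohen-Macaulay iff $b_2 \geq a_2 + c_2$, i.e. iff $\ell + 1 \geq m$; together with $\ell < m$ this is exactly $m = \ell + 1$. I do not expect a genuine obstacle here — the corollary is a short arithmetic specialization of Theorem~\ref{thm:p3} — and the only point requiring a moment's care is verifying that $(a_2, b_2, c_2) = (m-1, \ell+1, 1)$ really meets every condition of \eqref{eqn:b} and that no smaller $b_2$ can occur, both of which the $\gcd(\ell,m)=1$ divisibility argument settles directly.
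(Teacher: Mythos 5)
Your proof is correct and follows essentially the same route as the paper: both specialize Theorem~\ref{thm:p3}\ref{item:p3cm} by pinning down the minimal triple $(a_2,b_2,c_2)=(m-1,\ell+1,1)$ in \eqref{eqn:b} via a gcd/divisibility argument and then reading off $b_2 \geq a_2+c_2$ as $\ell+1 \geq m$. The only (cosmetic) difference is that the paper eliminates $\ell=n-m$ and uses $\gcd(m,n)=1$ to force $b_2+a_2=n$, whereas you eliminate $n=\ell+m$ and use $\gcd(\ell,m)=1$ to force $\ell \mid b_2-c_2$; your version is in fact slightly more explicit about why $c_2=1$ is forced.
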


\begin{proof}
  Since $\gcd(\ell,m)=1$ and $\ell + m = n$ we have $\gcd(m,n)=1$. From $b_2m - a_2(n-m)=c_2n$
  we get $(b_2+a_2)m=(c_2+a_2)n$. By the minimality of $b_2$ we get $b_2+a_2=n$ and
  $c_2+a_2=m$, so $c_2=1$, $a_2=m-1$ and $b_2=n-(m-1)$. Then $b_2 \geq a_2+c_2$ iff
  $n-(m-1) \geq m-1+1$ iff $n \geq 2m-1$. But $n=m+\ell \leq m+m-1 = 2m-1$, so $R$ is
  Cohen-Macaulay iff $n=2m-1$ iff $m=\ell+1$.
\end{proof}

\begin{rmk}
  We therefore recover Macaulay's result that $k[x^4,x^3y,xy^3,x^4]$ is not Cohen-Macaulay.
\end{rmk}

\begin{thm} \label{thm:p3basis}
  We can use the following algorithm to obtain a basis of $R/(x^n,y^n)$ over $k$.
  \begin{enumerate}[label=\arabic*,align=left,leftmargin=*,nosep]
    \item \label{item:p3initial} Let $B = B_0$.
    \item Let $base = a_1$, $a^*=a_2$, $b^*=b_2$ and $c^*=c_2$.
    \item While $b^*<a^*+c^*$, do the following steps.
    \item \label{item:p3geqa1}
    If $a^* \geq a_1$, then:\\
    Replace $B$ by $B \cup \{ (0,b^*) + (u,v) \mid u < base \text{ and } v < b_1 \}$.\\
    Replace $a^*$ by $a^*-a_1$, $b^*$ by $b^*+b_1$ and $c^*$ by $c^*+c_1$.
    \item \label{item:p3lla1}
    If $a^* \leq a_1 - base$, then:\\
    Replace $B$ by $B \cup \{ (0,b^*) + (u,v) \mid u < base \text{ and } v < b_2 \}$.\\
    Replace $a^*$ by $a^*+a_2$, $b^*$ by $b^*+b_2$ and $c^*$ by $c^*+c_2$.
    \item \label{item:p3lta1}
    If $a_1 - base < a^* < a_1$, then:\\
    Replace $B$ by $B \cup \{ (0,b^*) + (u,v) \mid (u < base \text{ and } v < b_1) \text{ or }
    (u < a_1-a^* \text{ and } v < b_2)\}$.\\
    Replace $a^*$ by $a^*+a_2$, $b$ by $b^*+b_2$, $c^*$ by $c^*+c_2$ and $base$ by $a_1-a^*$.
  \end{enumerate}
  After the algorithm stops, the set of monomials $\basis = \{ \vec{x}^{\vct{a}{b}} \mid
  (a,b) \in B\}$ forms a basis of $R/(x^n,y^n)$ over $k$.
  \qed
\end{thm}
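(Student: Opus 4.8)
The plan is to derive Theorem~\ref{thm:p3basis} as the specialization of Theorem~\ref{thm:basis} to the ring $R = k[x^n, x^{n-\ell}y^\ell, x^{n-m}y^m, y^n]$, i.e.\ to the case $d = n$, $e = n-\ell$, $f = n-m$. By the lemma identifying the definitions of $a_i,b_i,h_i$ in Notation~\ref{notn:abch} and \ref{notn:abchagain}, the quantities $a_1,a_2,a_3,b_1,b_2,b_3$ are exactly the ones used by the algorithm of Theorem~\ref{thm:basis}, and $\eqref{eqn:c} = \eqref{eqn:b} + \eqref{eqn:a}$ is the corresponding instance of $\eqref{eqn:(d,n)} = \eqref{eqn:(f,m)} + \eqref{eqn:(e,l)}$. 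Since the set $B$, the value $base$, and the variables $a^*,b^*$ are updated by identical rules in the two algorithms, and the branch conditions in Steps~\ref{item:p3geqa1}--\ref{item:p3lta1} (which involve only $a^*$, $a_1$, $base$) match those in Steps~\ref{item:geqa1}--\ref{item:lta1}, it suffices to (a) show that tracking $c^*$ recovers the pair $(g^*,h^*)$ of Theorem~\ref{thm:basis}, and (b) translate the stopping criterion.

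For (a) I would carry the loop invariant that throughout the algorithm $-a^*(n-\ell,\ell)+b^*(n-m,m) = (g^*,h^*)$ with $h^* = c^* n$ and $g^* = (b^*-a^*-c^*)n$. This holds at initialization since $(a^*,b^*,c^*) = (a_2,b_2,c_2)$ and \eqref{eqn:b} reads $-a_2\ell+b_2m = c_2 n$, whence $g_2 = -a_2(n-\ell)+b_2(n-m) = (b_2-a_2-c_2)n$. Each of Steps~\ref{item:p3geqa1}--\ref{item:p3lta1} adds \eqref{eqn:c} or \eqref{eqn:b} to the working equation; the $y$-component of each of those equations is a multiple of $n$ (namely $c_1 n$, resp.\ $c_2 n$), so replacing $c^*$ by $c^*+c_1$ or $c^*+c_2$ keeps $h^* = c^* n$, and a one-line computation using $g_1 = (a_1+b_1-c_1)n$ and $g_2 = (b_2-a_2-c_2)n$ shows that $g^* = (b^*-a^*-c^*)n$ is preserved. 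This is precisely how Steps~\ref{item:geqa1}--\ref{item:lta1} of Theorem~\ref{thm:basis} update $(g^*,h^*)$.

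For (b) I would observe that $b_1,b_2>0$ and $c_1,c_2>0$ (the latter because $c_i n$ is a positive combination of $\ell$ and $m$), and that $b^*$ and $c^*$ only increase, so $b^*>0$ and $c^*>0$ at every stage; hence Lemma~\ref{lem:geq}\ref{item:cgeq0} forces $h^* = c^* n > 0$ throughout. Thus ``$g^*<0$ or $h^*<0$'' is equivalent to ``$g^*<0$'', i.e.\ to $b^*-a^*-c^*<0$, which is exactly the loop condition ``$b^* < a^*+c^*$'' of Theorem~\ref{thm:p3basis}, and the algorithm halts at the same point. With initialization, updates, branch selection, and stopping rule all matched, the set $B$ produced here coincides with the one produced by the algorithm of Theorem~\ref{thm:basis} for $R/(x^n,y^n)$, so $\basis = \{\compvec{x}{a}{b} \mid (a,b)\in B\}$ is a basis of $R/(x^n,y^n)$ over $k$ by that theorem. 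The main obstacle is the sign-and-bookkeeping check in step (a): keeping \eqref{eqn:c}, \eqref{eqn:b}, \eqref{eqn:a}, their $x$- and $y$-components, and the convention $g^* = (b^*-a^*-c^*)n$ mutually consistent, so that the stopping condition genuinely becomes $b^* < a^*+c^*$; once that translation is pinned down, the rest is a direct appeal to Theorem~\ref{thm:basis}.
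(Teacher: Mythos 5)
Your proposal is correct and is exactly the route the paper intends: the paper states Theorem~\ref{thm:p3basis} with no written proof precisely because it follows from Theorem~\ref{thm:basis} once the lemma identifying Notations~\ref{notn:abch} and \ref{notn:abchagain} is in hand, with $h^*=c^*n$ and $g^*=(b^*-a^*-c^*)n$ translating the stopping criterion. Your write-up just makes explicit the bookkeeping the paper leaves to the reader (including the observation that $c^*>0$ throughout, so the condition $h^*<0$ never fires).
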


\begin{eg}
  Let $R=k[x^{23},x^{21}y^2,x^{5}y^{18},y^{23}]$. We will use Theorem~\ref{thm:p3basis} to
  calculate the size of the monomial $k$-basis $\basis$ of $R/(x^{23},y^{23})$ and
  find the elements of $\basis$.
  \begin{center}
    \begin{tabular}{c | c | c | l c | l}
      Step & $|B|$ & $base$ & \multicolumn{1}{c}{Equation} & \rule{21pt}{0pt}
      & \multicolumn{1}{c}{Remark}\\ \hline
      \rule{0pt}{12pt}    &        &    & \rule{2pt}{0pt} $2 \times 18= -5 \times 2 + 2 \times 23$ & \eqref{eqn:c}\\
      \ref{item:p3initial} &   23 & 5 & \rule{2pt}{0pt} $3 \times 18 = 4 \times 2 + 2 \times 23$ & \eqref{eqn:b}\\
      \ref{item:p3lta1} &     34 & 1 & \rule{2pt}{0pt} $6 \times 18 = 8 \times 2 + 4 \times 23$ & &
      Add equation \eqref{eqn:b} (to equation \eqref{eqn:b}).\\
      \ref{item:p3geqa1} & 36 & 1 & \rule{2pt}{0pt} $8 \times 18 = 3 \times 2 + 6 \times 23$ & &
      Add equation \eqref{eqn:c}.\\
      \ref{item:p3lla1}      & 39 & 1 & $11 \times 18 = 7 \times 2 + 8 \times 23$ & &
      Add equation \eqref{eqn:b}.\\
      \ref{item:p3geqa1} & \fbox{41} & 1 & $13 \times 18 = 2 \times 2 + 10 \times 23$ & &
      Add equation \eqref{eqn:c} and stop.
    \end{tabular}
  \end{center}

  We display the second coordinates of $\langle B \rangle = \log(\basis)$, i.e.\ the $y$-degrees of elements
  of $\basis$, as follows.
  \begin{center}
    \begin{tabular}{r r r r r r r r r}
      0 & 2 & 4 & 6 & 8 & 10 & 12 & 14 & 16\\
      18 & 20 & 22 & 24 & 26 & 28 & 30 & 32 & 34\\
      36 & 38 & 40 & 42 & 44\\
      54 & 56 & 58 & 60 & 62\\
      72 & 74 & 76 & 78 & 80\\
      90\\108\\126\\144\\162\\180\\198\\216
    \end{tabular}
  \end{center}
\end{eg}

\begin{qn}
  Can we find a ``sharp'' bound on $\dim_k R/(x^n,y^n)$ as in Corollary~\ref{cor:length}?
\end{qn}

\end{document}